\def\Xint#1{\mathchoice
    {\XXint\displaystyle\textstyle{#1}}%
    {\XXint\textstyle\scriptstyle{#1}}%
    {\XXint\scriptstyle\scriptscriptstyle{#1}}%
    {\XXint\scriptscriptstyle\scriptscriptstyle{#1}}%
    \!\int}
    \def\XXint#1#2#3{{\setbox0=\hbox{$#1{#2#3}{\int}$}
    \vcenter{\hbox{$#2#3$}}\kern-.5\wd0}}
    \def\fint{\Xint-}
\def\vint{\mathop{\mathchoice%
          {\setbox0\hbox{$\displaystyle\intop$}\kern 0.22\wd0%
           \vcenter{\hrule width 0.6\wd0}\kern -0.82\wd0}%
          {\setbox0\hbox{$\textstyle\intop$}\kern 0.2\wd0%
           \vcenter{\hrule width 0.6\wd0}\kern -0.8\wd0}%
          {\setbox0\hbox{$\scriptstyle\intop$}\kern 0.2\wd0%
           \vcenter{\hrule width 0.6\wd0}\kern -0.8\wd0}%
          {\setbox0\hbox{$\scriptscriptstyle\intop$}\kern 0.2\wd0%
           \vcenter{\hrule width 0.6\wd0}\kern -0.8\wd0}}%
          \mathopen{}\int}
\def\Xint#1{\mathchoice
    {\XXint\displaystyle\textstyle{#1}}%
    {\XXint\textstyle\scriptstyle{#1}}%
    {\XXint\scriptstyle\scriptscriptstyle{#1}}%
    {\XXint\scriptscriptstyle\scriptscriptstyle{#1}}%
    \!\int}
    \def\XXint#1#2#3{{\setbox0=\hbox{$#1{#2#3}{\int}$}
    \vcenter{\hbox{$#2#3$}}\kern-.5\wd0}}
    \def\fint{\Xint-}
\newcommand{\pip}{\varphi}
\newcommand{\eps}{{\varepsilon}\, }
\newcommand{\supp}{{\rm supp}\, }
\newcommand{\dom}{{\rm dom}\, }
\theoremstyle{plain}
\newtheorem{thm}{Theorem}[section]
\newtheorem{theorem}{Theorem}[section]
\newtheorem{lem}[thm]{Lemma}
\newtheorem{cor}[thm]{Corollary}
\newtheorem{prop}[thm]{Proposition}
\newtheorem{example}[thm]{Example}
\newtheorem{assump}[thm]{Assumption}
\theoremstyle{definition}
\newtheorem{defn}[thm]{Definition}
\theoremstyle{remark}
\newtheorem{remark}[thm]{Remark}
\newcommand{\bremark}{\begin{remark} \em}
\newcommand{\eremark}{\end{remark} }
\begin{document}


\title{Dirichlet forms on metric measure spaces as Mosco limits of  Korevaar-Schoen energies}

\author{Patricia Alonso Ruiz\footnote{Partly supported by the NSF grant DMS 1951577}, Fabrice Baudoin\footnote{Partly supported by the NSF grant DMS~1901315.}}

\maketitle

\begin{abstract}
This paper establishes sufficient general conditions for the existence of Mosco limits of Korevaar-Schoen $L^2$ energies, first in the context of Cheeger spaces and then in the context of fractal-like spaces with walk dimension greater than 2. Among other ingredients, a new Rellich-Kondrachov type theorem for Korevaar-Schoen-Sobolev spaces is of independent interest.
\end{abstract}

\bigskip

\textbf{MSC classification: }31C25; 60J45; 28A80; 60J60.

\medskip

\textbf{Keywords: } Dirichlet forms, Mosco convergence, Korevaar-Schoen energy, Cheeger space, fractals.

\tableofcontents

\section{Introduction}
The important question of how to construct a diffusion process intrinsic to a given ``generic'' metric measure space was stressed by the influential work of Sturm~\cite{Stu98a,Stu98b} and has 
become increasingly relevant in the study of analytic, geometric and probabilistic aspects of highly irregular, non-smooth spaces. To construct a ``naturally intrinsic'' Brownian motion on fractals~\cite{Kus89,BB89,Kig89,KS05}, an initial key observation was to rephrase the construction of Brownian motion on $\mathbb{R}^n$ in terms of Dirichlet forms: instead of finding the limit of suitably rescaled random walks one could alternatively consider the convergence of sequences of non-local regular Dirichlet forms. 

\medskip

This latter approach was laid out by Kumagai and Sturm in the context of metric measure spaces in~\cite{KS05}. More precisely, for a locally compact doubling space $(X,d,\mu)$ they provided sufficient conditions to guarantee the \emph{$\Gamma$-convergence} of the non-local Dirichlet forms 
\begin{equation}\label{E:DF_intro}
\mathcal{E}^{r_n}(f,f):=\frac{1}{h(r_n)}\int_X\frac{1}{\mu(B(x,r_n))}\int_{B(x,r_n)}|f(x)-f(y)|^2d\mu(y)\,d\mu(x),\qquad f\in L^2(X,\mu),
\end{equation}
where $h(r_n)$ is a suitable  scale function  and $r_n\to 0$. The notion of $\Gamma$-convergence was introduced by de Giorgi and Franzoni in~\cite{dGF75} to study variational problems. Broadly speaking, given a sequence of functionals like $\{\mathcal{E}^{r_n}(\cdot,\cdot)\}_{n\geq 1}$ and a sequence of minimizers $\{f_n\}_{n\geq 1}$, one is interested in the question whether there is a meaningful limiting functional $\mathcal{E}$ and a limiting function $f$ that is also a minimizer of $\mathcal{E}$. The type of convergence that gives the limiting functional that property is known as $\Gamma$-convergence. 


\medskip

A related and stronger variational notion of convergence with further-reaching implications is the so-called \emph{Mosco convergence} introduced by Mosco in~\cite{Mos94}. Among others, the Mosco convergence of a sequence such as $\{\mathcal{E}^{r_n}(\cdot,\cdot)\}_{n\geq 1}$ can be characterized in terms of convergence of the semigroups associated with the Dirichlet forms. Especially relevant from a probabilistic point of view is the fact that Mosco convergence implies the weak convergence of the finite-dimensional distributions of the associated Hunt processes. This property makes it the ``correct'', or at least the most useful, notion of convergence for Dirichlet forms. Mosco convergence is now established as part of the main results in the present paper, Theorem~\ref{T:main} and Theorem~\ref{T:main_dw}, for general compact metric measure spaces. Passing from $\Gamma$-convergence to Mosco convergence is an involved matter and will require in particular developing a suitable Rellich-Kondrachov theorem of independent interest in the spirit of~\cite{HK00}, see Section~\ref{S:Rellich-Kondrachov}.

\medskip

In the case when $h(r)=r^{d_w}$ for some $d_w\geq 2$, the Dirichlet form~\eqref{E:DF_intro} resembles the energy functionals introduced by Korevaar and Schoen in their seminal paper~\cite{KS93}. The versatility and applicability of these functionals in the metric measure space setting, see e.g.~\cite{Bau22,KST04,MMS16,BV2,BV3}, has led to an increasing interest and body of work around them and will therefore be the focus of the present paper. Our main purpose is to characterize the existence of local and regular Dirichlet forms $(\mathcal{E},\dom\mathcal{E})$ as limits of Korevaar-Schoen energy functionals under minimal requirements on the underlying space $(X,d,\mu)$. 




The paper is organized as follows: After a brief description of the initial set up, Section~\ref{S:strictly_local} deals with the case $d_w=2$ and the characterization of strictly local and regular Dirichlet forms in terms of the validity of a 2-Poincar\'e inequality with respect to Lipschitz constants. The main result is stated in Theorem~\ref{T:main} and its proof subdivided in several steps, that correspond to different aspects of the limiting Dirichlet form. Each of these are treated in separate sections; Mosco convergence is investigated in Subsection~\ref{S:Mosco_convergence} and obtained for compact spaces. While not an a priori assumption, in the case $d_w=2$ the limiting Dirichlet form is strictly local and in particular both the original and the intrinsic distance are bi-lipschitz equivalent. 
It is worth noting that while working on this paper, through discussions with N. Shanmugalingam,  we became aware of a work in progress \cite{WIPNages} where, in the same setting with $d_w=2$, Dirichlet forms are constructed as Mosco limits of discrete energies associated with graph approximations of the metric measure space $(X,d,\mu)$.
Moving to the general case $d_w\geq 2$ in Section~\ref{S:non-strictly}, Subsection~\ref{S:Rellich-Kondrachov} is devoted to developing a Rellich-Kondrachov theorem of interest in its own. This compactness argument is applied in Subsection~\ref{SS:non-strictly} to prove the corresponding existence result, Theorem~\ref{T:main_dw}. The exposition finishes 
in Subsection~\ref{SS:converse_main} with a discussion of a converse to Theorem~\ref{T:main_dw}.

\

\textbf{Notation:} If $a$ are $b$ are two non negative functionals we will use the notation $a \simeq b$ to indicate that there exists a constant $C\ge 1$ such that $\frac{1}{C} a \le b \le C a$.

\section{Definitions and setup}

\subsection{Metric measure space}\label{MMSintro}
Throughout the paper we consider a locally compact, complete, metric measure space $(X,d,\mu)$ where $\mu$ is a Radon measure. Any open metric ball centered at $x\in X$ with radius $r>0$ will be denoted by
\[
B(x,r)= \{ y \in X, d(x,y)<r \}.
\] 
When convenient, for a ball $B:=B(x,r)$ and $\lambda>0$, we will denote by $\lambda B$ the ball $B(x,\lambda r)$. 

The measure $\mu$ will be assumed to be doubling and positive in the sense that there exists a constant $C>0$ such that for every $x \in X, r>0$,
\begin{equation}\label{A:VD}
0< \mu (B(x,2r)) \le C \mu(B(x,r)) <+\infty.\tag{$\rm VD$}
\end{equation}

From the doubling property of $\mu$, see e.g.~\cite[Lemma 8.1.13]{HKST15}, it follows that there exist constants $C > 0$ and $0<Q\ <\infty$ such that
\begin{equation}\label{eq:mass-bounds}
 \frac{\mu(B(x,R))}{\mu(B(x,r))}
 \le C\left(\frac{R}{r}\right)^Q
\end{equation}
for any $0<r\le R$ and $x\in X$. Another useful consequence of the doubling property is the availability of maximally separated $\eps$-coverings with the bounded overlap property and subordinated Lipschitz partitions of unity, see \cite[pp. 102-104]{HKST15}. 

\subsection{Dirichlet forms}
 Let $(\mathcal{E},\mathcal{F}:=\rm{dom}\,\mathcal{E})$ be a densely defined closed symmetric form on $L^2(X,\mu)$. A function $v\colon X\to\mathbb{R}$ is called a normal contraction of the function $u$ if for $\mu$-almost every $x,y \in X$,
\[
| v(x)-v(y)| \le |u(x) -u(y)| \qquad\text{and}\qquad |v(x)| \le |u(x)|.
\]
The form $(\mathcal{E},\mathcal{F})$ is called a Dirichlet form if it is Markovian, that is, it has the property that if $u \in \mathcal{F}$ and $v$ is a normal contraction of $u$ then $v \in \mathcal{F}$ and $\mathcal{E}(v,v) \le \mathcal{E} (u,u)$.

 Some basic properties of Dirichlet forms are collected in~\cite[Theorem 1.4.2]{FOT11}. In particular, we note that $\mathcal{F} \cap L^\infty(X,\mu)$ is an algebra and $\mathcal F$ is a Hilbert space equipped with the $\mathcal{E}_1$-norm defined as
\begin{equation}\label{e-E1}
\|f\|_{\mathcal{E}_1}:=\left( \| f \|_{L^2(X,\mu)}^2 + \mathcal{E}(f,f) \right)^{1/2}.
\end{equation} 

Denoting by $C_c(X)$ the space of continuous functions with compact support in $X$, we recall that a core for $(\mathcal{E},\mathcal{F})$ is a subset $\mathcal{C}\subseteq C_c(X) \cap \mathcal{F}$ which is dense in $C_c(X)$ in the supremum norm and dense in $\mathcal{F}$ in the $\mathcal{E}_1$-norm.

\begin{defn} 
A Dirichlet form $(\mathcal{E},\mathcal{F})$ on $L^2(X,\mu)$ is called regular if it admits a core.
\end{defn}

Given a regular Dirichlet form $(\mathcal{E},\mathcal{F})$, for every $u,v\in \mathcal F\cap L^{\infty}(X)$ one defines the energy measure $\Gamma (u,v)$ through the formula
 \[
\int_X\phi\, d\Gamma(u,v)=\frac{1}{2}[\mathcal{E}(\phi u,v)+\mathcal{E}(\phi v,u)-\mathcal{E}(\phi, uv)], \quad \phi\in \mathcal F \cap C_c(X).
\]
The latter definition of $\Gamma(u,v)$ can be extended to any $u,v\in \mathcal F$ by truncation, see e.g.~\cite[Theorem 4.3.11]{CF12}. According to Beurling and Deny~\cite{BD59}, $\Gamma(u,v)$ is a signed Radon measure for any $u,v\in \mathcal F$ and 
\[
\mathcal E(u,v)=\int_X d\Gamma(u,v).
\]
For $u \in \mathcal{F}$, $\Gamma(u,u)$ is called  the energy measure of $u$.

\medskip

We will be concerned with two main types of regular Dirichlet forms, \emph{strongly} and \emph{striclty} local. 

\begin{defn}(\cite[p.6]{FOT11}) 
A Dirichlet form $(\mathcal{E},\mathcal{F})$ is called  \emph{strongly local} if for any $u,v\in\mathcal{F}$ with compact supports such that $u$ is constant in a 
neighborhood of the support of $v$, it holds that $\mathcal{E}(u,v)=0$. 
\end{defn}

With respect to a regular Dirichlet form $(\mathcal{E},\mathcal{F})$ one can define the following \emph{intrinsic metric} $d_{\mathcal{E}}$ on $X$ by
\begin{equation}\label{eq:intrinsicmetric}
d_{\mathcal{E}}(x,y)=\sup\{u(x)-u(y)\, :\, u\in\mathcal{F}\cap C_0(X)\text{ and } d\Gamma(u,u)\le d\mu\},
\end{equation}
where the condition $d\Gamma(u,u)\le d\mu$ means that $\Gamma(u,u)$ is absolutely continuous with 
respect to $\mu$ with Radon-Nikodym derivative bounded by $1$. 

\begin{defn}
A strongly local regular Dirichlet form $(\mathcal{E},\mathcal{F})$ is called strictly local if $d_{\mathcal{E}}$ is a metric on $X$ and the topology induced by $d_{\mathcal{E}}$ coincides with the topology generated by the underlying metric of $(X,d)$.
\end{defn}

\subsection{$\Gamma$-convergence and Mosco convergence}
The two types of convergence of Dirichlet forms we deal with are (de Giorgi) $\Gamma$-convergence and Mosco convergence. To better suit our discussion, the definitions are presented in the context of $L^2$-spaces, although they are available in more general settings, see~\cite{dGF75,Mos94} and also~\cite{KS03} for generalizations of the original versions.
\begin{defn}\label{D:Gamma_conv}[$\Gamma$-convergence]
A sequence of forms $\{\mathcal{E}_n\}_{n\geq 1}$ is said to $\Gamma$-converge to $\mathcal{E}$ if 
\begin{enumerate}[wide=0em,label=(\alph*)]
    \item For any sequence $\{f_n\}_{n\geq 1}\subset L^2(X,\mu)$ that converges \emph{strongly} to $f\in L^2(X,\mu)$ in $L^2(X,\mu)$, 
    \[
    \liminf_{n\to\infty}\mathcal{E}_n(f_n,f_n)\geq \mathcal{E}(f,f).
    \]
    \item For any $f\in L^2(X,\mu)$ there exists a sequence $\{f_n\}_{n\geq 1}\subset L^2(X,\mu)$ that converges strongly to $f$ in $L^2(X,\mu)$ and
    \[
    \limsup_{n\to\infty}\mathcal{E}(f_n,f_n)\leq \mathcal{E}(f,f).
    \]
\end{enumerate}
\end{defn}
Note that in the previous definition, the domain of the $\Gamma$-limit $\mathcal E$ is then 
\[
\mathrm{dom} (\mathcal E)=\left\{ f \in L^2(X,\mu), \mathcal{E}(f,f) <+\infty \right\}
\]
Mosco convergence is slightly stronger and enjoys the property of being characterizable in terms of convergence of semigroups, resolvents and spectral families associated with the corresponding Dirichlet forms.
\begin{defn}\label{D:Mosco_conv}[Mosco convergence]
A sequence of forms $\{\mathcal{E}_n\}_{n\geq 1}$ is said to Mosco-converge to $\mathcal{E}$ if 
\begin{enumerate}[wide=0em,label=(\alph*)]
    \item For any sequence $\{f_n\}_{n\geq 1}\subset L^2(X,\mu)$ that converges \emph{weakly} to $f\in L^2(X,\mu)$ in $L^2(X,\mu)$, 
    \[
    \liminf_{n\to\infty}\mathcal{E}_n(f_n,f_n)\geq \mathcal{E}(f,f).
    \]
    \item For any $f\in L^2(X,\mu)$ there exists a sequence $\{f_n\}_{n\geq 1}\subset L^2(X,\mu)$ that converges strongly to $f$ in $L^2(X,\mu)$ and
    \[
    \limsup_{n\to\infty}\mathcal{E}(f_n,f_n)\leq \mathcal{E}(f,f).
    \]
\end{enumerate}
\end{defn}

\section{Strictly local Dirichlet forms on Cheeger spaces as Mosco limits of Korevaar-Schoen energies}\label{S:strictly_local}

This section starts off with a metric measure space as in the previous section equipped with a 2-Poincar\'e inequality for Lipschitz functions.

\subsection{Cheeger spaces}
A metric measure space $(X,d,\mu)$ that satisfies the volume doubling property and the 2-Poincar\'e inequality \eqref{E:p-Poincare} is often referred to as a \emph{Cheeger space} due to the very influential paper \cite{Che99}. We note that any Cheeger space is quasi-convex, see~\cite[Remark 8.3.3]{HKST15}.
The Lipschitz constant of a function $f\in{\rm Lip}(X)$ is defined as
\[
(\mathrm{Lip} f )(y):=\limsup_{r \to 0^+} \sup_{x \in X, d(x,y) \le r} \frac{|f(x)-f(y)|}{r}
\]
and the $2$-Poincar\'e inequality that will be assumed throughout this section is the following.

\begin{assump}[2-Poincar\'e inequality with Lipschitz constants]\label{A:2PI_Lip}
For any $f\in {\rm Lip}(X)$ and any ball $B(x,R)$ of radius $R>0$,
\begin{align}\label{E:p-Poincare}
\int_{B(x,R)} | f(y) -f_{B(x,R)}|^2 d\mu (y) \le C R^2 \int_{B(x,\lambda R)} (\mathrm{Lip} f )(y)^2 d\mu (y),
\end{align}
where
\[
f_{B(x,R)}:=\fint_{B(x,R)}  f(y) d\mu(y).
\]
The constants $C>0$ and $\lambda \ge 1$ in~\eqref{E:p-Poincare} are independent from $x$, $R$ and $f$.
\end{assump}

\begin{remark}
The 2-Poincar\'e inequality \eqref{E:p-Poincare} is equivalent to the 2-Poincar\'e inequality with upper gradients, see~\cite[Theorem 8.4.2]{HKST15}.
\end{remark}

On such a Cheeger space we are interested in the Korevaar-Schoen type energy functionals defined for any $f \in L^2(X,\mu)$ as
\begin{equation}\label{E:def_E2}
E(f,r):= \int_X\fint_{B(x,r)} \frac{|f(y)-f(x)|^2}{r^{2}} d\mu(y) d\mu(x),
\end{equation}
where in general we write
\[
\fint_{B(x,r)} f\, d\mu:=\frac{1}{\mu(B(x,r))} \int_{B(x,r)} f\, d\mu,
\]
and in the associated Korevaar-Schoen space 
\[
KS^{1,2}(X):=\left\{ f \in L^2(X,\mu), \, \limsup_{r \to 0^+} E(f,r) <+\infty \right\}.
\]

Functionals like~\eqref{E:def_E2} have been studied in~\cite{Bau22,Stu98a,Stu98b,KS05} in a similar set-up. 

\begin{remark}
In the present setting, the space $KS^{1,2}(X)$ equipped with the norm $\|f\|_{L^2} +\sup_{r >0} E(f,r)^{1/2}$ coincides with the Newtonian Sobolev space $N^{1,2}(X)$ with equivalent norm $\|f\|_{L^2} +\| g_f \|_{L^2}$, where $g_f$ is the minimal 2-weak upper gradient of $f$, see for instance \cite{MMS16} or \cite{Bau22}.
\end{remark}
\begin{remark}
From \cite{Bau22}, in our setting, there exists a constant $C>0$ such that for every $f \in KS^{1,2}(X)$,
\[
\sup_{r >0} E(f,r) \le C \liminf_{r \to 0} E(f,r).
\]
\end{remark}

\subsection{Main result}

Our aim is to prove, under the assumptions of the section, the following result.
\begin{theorem}\label{T:main}
There exists a  Dirichlet form $(\mathcal{E},\mathcal{F})$ on $L^2(X,\mu)$ such that:
\begin{enumerate}[wide=0em,label={\rm(\roman*)}]
\item $\mathcal{E}$ has domain $\mathcal{F}=KS^{1,2}(X)$;
\item $\mathcal{E}$ is a $\Gamma$-limit of $E(f,r_n)$, where $r_n$ is a positive sequence such that $r_n \to 0$;
\item When $X$ is compact, $\mathcal{E}$ is also the Mosco limit of $E(f,r_n)$;
\item $(\mathcal{E},KS^{1,2}(X))$ is regular with core $ {\rm Lip}(X) \cap C_c(X)$, strictly local, and its intrinsic distance $d_\mathcal{E}$ is bi-Lipschitz equivalent to  the original distance $d$;
\item  $\mathcal{E}$ satisfies the 2-Poincar\'e inequality
\[
\int_{B(x,r)} | f(y) -f_{B(x,r)}|^2 d\mu (y) \le C r^2 \int_{B(x,\lambda r)} d\Gamma (f,f),
\]
where $\Gamma (f,f)$ is the energy measure of $f$.
\end{enumerate}

Moreover, on $KS^{1,2}(X)$
\[
 \mathcal{E}(f,f)  \simeq \liminf_{r \to 0^+} E(f,r) \simeq \sup_{r > 0} E(f,r).
\]
\end{theorem}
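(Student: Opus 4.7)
\medskip

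\noindent\textbf{Plan of proof.} The strategy is a compactness-plus-identification scheme: we first extract a $\Gamma$-limit along a subsequence of scales, then upgrade it to Mosco convergence via a Rellich--Kondrachov argument, and finally identify the limit as a strictly local Dirichlet form with all the claimed properties by exploiting the 2-Poincar\'e inequality \eqref{E:p-Poincare} on Lipschitz functions.

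\medskip

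\noindent\emph{Step 1: extraction of a $\Gamma$-convergent subsequence.} For fixed $r>0$, the functional $E(\cdot,r)$ is a non-negative quadratic form on $L^2(X,\mu)$, symmetric, Markovian (pointwise contractions decrease the absolute differences inside the integrand) and lower semicontinuous on $L^2(X,\mu)$. By the general compactness theorem for $\Gamma$-convergence of quadratic forms on a separable Hilbert space (in the form given by Mosco, cf.\ \cite{Mos94}), from any sequence $r_n\to 0$ we can extract a subsequence (still denoted $r_n$) along which $E(\cdot,r_n)$ $\Gamma$-converges in $L^2(X,\mu)$ to a lower semicontinuous quadratic form $\mathcal{E}$ whose effective domain $\mathcal{F}$ is a linear subspace of $L^2(X,\mu)$. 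Symmetry, bilinearity and the Markov property of the limit are preserved in this passage, so $(\mathcal{E},\mathcal{F})$ is already a Dirichlet form once we know $\mathcal{F}$ is dense; this last fact will follow once we identify $\mathcal{F}=KS^{1,2}(X)$ and observe that ${\rm Lip}(X)\cap C_c(X)\subset KS^{1,2}(X)$ is dense in $L^2(X,\mu)$.

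\medskip

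\noindent\emph{Step 2: identification of the domain and of the two-sided comparison.} The inclusion $\mathcal{F}\subset KS^{1,2}(X)$ is the easy direction: if $f_n\to f$ strongly in $L^2$ with $\liminf_n E(f_n,r_n)=\mathcal{E}(f,f)<\infty$, then Fatou together with the comparison $\sup_{r>0}E(g,r)\le C\liminf_{r\to 0}E(g,r)$ quoted from \cite{Bau22} and the standard lower semicontinuity of $E(\cdot,r)$ in $L^2$ yields $\sup_{r>0}E(f,r)<\infty$, hence $f\in KS^{1,2}(X)$ and $\mathcal{E}(f,f)\gtrsim\sup_{r>0}E(f,r)$. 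For the reverse inclusion and bound, given $f\in KS^{1,2}(X)$ we take the constant recovery sequence $f_n\equiv f$; by \cite{Bau22} we get $\limsup_n E(f,r_n)\le C\sup_{r>0}E(f,r)<\infty$, so $f\in\mathcal{F}$ and $\mathcal{E}(f,f)\lesssim\sup_{r>0}E(f,r)$. Combining, $\mathcal{F}=KS^{1,2}(X)$ and
\[
\mathcal{E}(f,f)\simeq \liminf_{r\to 0^+}E(f,r)\simeq \sup_{r>0}E(f,r).
\]

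\medskip

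\noindent\emph{Step 3: upgrade to Mosco convergence when $X$ is compact.} The only missing ingredient is the liminf inequality along weakly, rather than strongly, $L^2$-convergent sequences. We will use the Rellich--Kondrachov type theorem announced for $KS^{1,2}(X)$ (Subsection~\ref{S:Rellich-Kondrachov}): bounded sequences in the $KS^{1,2}$-norm are relatively compact in $L^2(X,\mu)$. Given $f_n\rightharpoonup f$ weakly in $L^2$, we may assume $\liminf_n E(f_n,r_n)<\infty$; the bound $\sup_{r>0}E(g,r)\le C\liminf_{r\to 0}E(g,r)$ and a diagonalisation then show $\{f_n\}$ is bounded in $KS^{1,2}(X)$, so Rellich--Kondrachov upgrades weak $L^2$-convergence to strong, and the already established $\Gamma$-liminf inequality applies.

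\medskip

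\noindent\emph{Step 4: Poincar\'e inequality, regularity, strict locality and equivalence of distances.} The 2-Poincar\'e inequality for $\mathcal{E}$ is obtained by first writing \eqref{E:p-Poincare} in its $KS^{1,2}$-form (replacing $\int(\mathrm{Lip}f)^2d\mu$ by $\sup_r E(f,r)$ on a ball, via a standard localisation argument using \eqref{A:VD}), then passing to the limit and using $\mathcal{E}(f,f)\simeq \sup_r E(f,r)$ to reintroduce the energy measure $d\Gamma(f,f)$. Regularity with core ${\rm Lip}(X)\cap C_c(X)$ follows from the density of this space in $C_c(X)$ (sup norm) and in $KS^{1,2}(X)$ in the $\mathcal{E}_1$-norm, both standard consequences of the doubling/Poincar\'e hypotheses. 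Strong locality is read off from the energy measure: for $f\in{\rm Lip}(X)$ constant on an open set $U$, we have $(\mathrm{Lip}f)\equiv 0$ on $U$, hence by the Poincar\'e inequality $d\Gamma(f,f)\lfloor_U=0$, and strong locality extends to $\mathcal{F}$ by the chain rule and density. For the intrinsic distance, the Poincar\'e inequality yields $d\le C d_\mathcal{E}$ (testing with Lipschitz approximations of $y\mapsto d(\cdot,y)$), while $d_\mathcal{E}\le C d$ follows from $(\mathrm{Lip}f)\le 1\Rightarrow d\Gamma(f,f)\le C\,d\mu$ combined with $|f(x)-f(y)|\le d(x,y)$; this gives the claimed bi-Lipschitz equivalence and, in particular, that $d_\mathcal{E}$ is a compatible metric, so that $(\mathcal{E},\mathcal{F})$ is strictly local.

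\medskip

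\noindent\emph{Step 5: independence of subsequence.} Since the limit $\mathcal{E}$ satisfies the two-sided comparison $\mathcal{E}(f,f)\simeq \sup_{r>0}E(f,r)$ which is intrinsic to $f$ and does not depend on the subsequence, a Urysohn-type argument shows that every subsequence of $\{E(\cdot,r_n)\}_n$ has a further $\Gamma$-convergent subsequence with the same limit, so the whole sequence $\Gamma$-converges (and, when $X$ is compact, Mosco-converges).

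\medskip

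The step I expect to be most delicate is the identification of the limit as a genuine Dirichlet form with the correct domain and the Mosco upgrade (Steps 2 and 3), which hinges on the nontrivial reverse comparison $\sup_r E(f,r)\lesssim \liminf_{r\to 0}E(f,r)$ and on the new Rellich--Kondrachov theorem of Section~\ref{S:Rellich-Kondrachov}. The bi-Lipschitz equivalence $d\simeq d_\mathcal{E}$ in Step 4 is also delicate because the lower bound $d\le C d_\mathcal{E}$ is precisely where the 2-Poincar\'e inequality is used in an essential, rather than merely formal, way.
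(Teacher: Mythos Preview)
Your central gap lies in Step~2 (and it propagates to Step~3). To show $\mathcal{F}\subset KS^{1,2}(X)$ and $\mathcal{E}(f,f)\gtrsim\sup_{r>0}E(f,r)$, you need the inequality
\[
\sup_{r>0}E(f,r)\le C\liminf_{n\to\infty}E(f_n,\varepsilon_n)
\]
whenever $f_n\to f$ in $L^2$ and $\varepsilon_n\to 0$. Your argument invokes lower semicontinuity of $E(\cdot,r)$ for fixed $r$ together with the single-function comparison $\sup_r E(g,r)\le C\liminf_{r\to 0}E(g,r)$ from \cite{Bau22}, but these do not combine: the former gives $E(f,r)\le\liminf_n E(f_n,r)$ with $r$ \emph{fixed}, while you only control $E(f_n,r_n)$ at the \emph{moving} scale $r_n$; and the latter applies to a single $g\in KS^{1,2}(X)$, which is not known for $f_n$. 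The paper establishes exactly this mixed $(f_n,\varepsilon_n)$ inequality as Lemma~\ref{L:Kumagai-Sturm_condition}, and it is not formal: one builds locally Lipschitz approximations $f_{n,\varepsilon}=\sum_i f_{n,B_i^\varepsilon}\varphi_i^\varepsilon$ via a partition of unity, bounds $\int(\mathrm{Lip}\,f_{n,\varepsilon})^2d\mu$ by $E(f_n,2\varepsilon)$ using doubling, and then invokes the Poincar\'e inequality~\eqref{E:p-Poincare}. This is the Kumagai--Sturm condition that feeds into \cite[Theorem~2.1]{KS05}, which in turn delivers the $\Gamma$-limit, the domain identification, and strong locality all at once. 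The same construction is what drives the asymptotic compactness in Step~3: one does not show $\{f_n\}$ is bounded in $KS^{1,2}(X)$ (that fails for the same reason), but rather that the Lipschitz mollifications $f_{n,\varepsilon_n/6}$ satisfy the hypotheses of \cite[Theorem~8.1]{HK00} and are $L^2$-close to $f_n$. Note also that the Rellich--Kondrachov theorem of Section~\ref{S:Rellich-Kondrachov} is developed for the case $d_w>2$, not for the present one.

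Two smaller points. In Step~4, strong locality is not deduced from the Poincar\'e inequality; it comes for free from \cite{KS05}. The bound $d\Gamma(f,f)\le C(\mathrm{Lip}\,f)^2\,d\mu$ needed for $d_{\mathcal{E}}\gtrsim d$ is obtained in the paper via Mosco's oscillation trick (computing $\lambda^{-2}\mathcal{E}(\phi\cos(\lambda f))+\lambda^{-2}\mathcal{E}(\phi\sin(\lambda f))$ and letting $\lambda\to\infty$), not from Poincar\'e. Finally, your Step~5 overclaims: the two-sided comparison $\mathcal{E}\simeq\sup_r E(\cdot,r)$ is only up to multiplicative constants, so different subsequential limits need not coincide, and the theorem asserts existence along \emph{some} $r_n\to 0$, not full convergence; the paper explicitly remarks that uniqueness of $\mathcal{E}$ is open in this generality.
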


\begin{remark}
The Cheeger energy functional $\mathbf{Ch}_2 (f)$ is defined by
\[
\mathbf{Ch}_2 (f)=\int_X g_f^2 d\mu,
\]
where $g_f$ is the minimal 2-weak upper gradient of $f$ in the metric measure space $(X,d,\mu)$. One also has
\[
\mathbf{Ch}_2 (f) = \inf_{f_n} \int_{X} (\mathrm{Lip} f_n )(y)^2 d\mu (y),
\]
where the infimum $\inf_{f_n}$ is taken over the sequences of locally Lipschitz functions $f_n$ such that $f_n \to f$ in $L^2_{loc}(X,\mu)$. From~\cite[ Theorem 5.1]{Bau22}, the Dirichlet form $\mathcal E$ in Theorem~\eqref{T:main} therefore satisfies 
\[
\mathcal{E}(f,f) \simeq \mathbf{Ch}_2 (f).
\]

Note that in general $\mathbf{Ch}_2 (f)$ is not a Dirichlet form since it might fail to satisfy the  parallelogram identity, see \cite{KSZ14}.
\end{remark}

\begin{remark}
On $(X,d,\mu)$, one can also construct a strictly local Dirichlet form
\[
\tilde{\mathcal{E}}(f,f)=\int_X \| D f \|^2 d\mu
\]
using a Cheeger differentiable structure, see~\cite[Section 2.3]{MMS16}. This Dirichlet form also satisfies
\[
\tilde{\mathcal{E}}(f,f) \simeq \mathbf{Ch}_2 (f).
\]
Thus, the Dirichlet form $\mathcal E$ in Theorem \eqref{T:main} is such that 
\[
\mathcal{E}(f,f)  \simeq \tilde{\mathcal{E}}(f,f).
\]
From Le Jan theorem~\cite[Proposition 1.5.5(b)]{LJ} we therefore get
\[
\frac{d\Gamma (f,f)}{d\mu} \simeq g_f^2 \simeq \| D f \|^2.
\]
\end{remark}

\begin{remark}
It follows from~\cite{Gor22}, \cite{Stu98a}, \cite{KSZ14}, see also \cite{HP21}, that if $(X,d,\mu)$ is a RCD$(0,N)$ space then the  Dirichlet form $\mathcal{E}$ in Theorem \ref{T:main}  is unique and actually given by the pointwise limit
\[
\mathcal{E} (f,f)=\lim_{r \to 0} E(f,r)= C_N \mathbf{Ch}_2 (f)=C_N \int_X \| Df\|^2 d\mu,
\]
where $C_N$ is a universal constant and $\mathbf{Ch}_2 (f)=\int_X g_f^2 d\mu$ is the Cheeger energy  defined as  before and $\int_X \| Df\|^2 d\mu$ is the Dirichlet form obtained from the Cheeger differential structure. In general, we do not know if  the form $\mathcal E$ that satisfies the properties of Theorem \ref{T:main} is unique or not.
\end{remark}

%

The proof of Theorem \ref{T:main} is divided in several intermediary results according to the subsequent sections. 

\subsection{Existence of a $\Gamma$-limit}\label{S:Mosco_convergence}

We first derive the existence of a $\Gamma$-limit from~\cite[Theorem 2.1]{KS05}, see also~\cite[Theorem 3.3]{Stu98a}. Under the additional assumption that the underlying space $(X,d)$ is compact, we will obtain in Theorem~\ref{T:Mosco_limit} the convergence in the Mosco sense.

\medskip

We begin with an observation that will allow us to apply the results in~\cite{KS05}.

\begin{lem}\label{L:Kumagai-Sturm_condition}
Let $\{\eps_n\}_{n\geq 0}$ with $\eps_n >0$ and $\lim_{n\to\infty}\eps_n=0$. There exists a constant $C>0$ independent of the sequence $\{\eps_n\}_{n\geq 0}$ such that
\[
  \sup_{r>0} E(f,r) \le  C \liminf_{n \to +\infty}  E(f_n,\eps_n)
\]
for all $f\in L^2(X,\mu)$ and all $\{f_n\}_{n\geq 1}\subset L^2(X,\mu)$ with $f_n\to f$ in $L^2(X,\mu)$.
\end{lem}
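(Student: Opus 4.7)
The plan is to derive the inequality from two ingredients: lower semicontinuity of $E(\cdot, r)$ at each fixed scale $r$ under strong $L^2$ convergence, and a uniform scale-comparison $E(g, r) \le C_0\, E(g, s)$ for $0 < s \le r$, where $C_0$ depends only on the doubling and Poincar\'e constants of $(X,d,\mu)$.

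First I would fix $r>0$ and rewrite, for any $g\in L^2(X,\mu)$,
\[
E(g,r)=\iint_{X\times X}\frac{\mathbf{1}_{B(x,r)}(y)}{r^2\,\mu(B(x,r))}\,|g(x)-g(y)|^2\,d\mu(y)\,d\mu(x),
\]
noting that the weight in front of $|g(x)-g(y)|^2$ is nonnegative, $\mu\otimes\mu$-measurable, and independent of $g$. Given any subsequence of $(f_n)$, I would extract a further sub-subsequence $(f_{n_k})$ with $f_{n_k}\to f$ pointwise $\mu$-a.e., so that $|f_{n_k}(x)-f_{n_k}(y)|^2\to|f(x)-f(y)|^2$ for $\mu\otimes\mu$-a.e.\ $(x,y)$. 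Fatou's lemma then gives $E(f,r)\le\liminf_k E(f_{n_k},r)$, and since every subsequence admits such a sub-subsequence, $E(f,r)\le\liminf_n E(f_n,r)$.

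Second, I would invoke the scale-comparison estimate that underlies \cite[Theorem~2.1]{KS05} and the remark following \eqref{E:def_E2} (see \cite{Bau22}): under volume doubling and the $2$-Poincar\'e inequality of Assumption~\ref{A:2PI_Lip}, there exists $C_0>0$ depending only on $(X,d,\mu)$ such that
\[
E(g,r)\le C_0\,E(g,s)\qquad\text{for every }g\in L^2(X,\mu)\text{ and every }0<s\le r.
\]
For $n$ large enough we have $\eps_n\le r$, so $E(f_n,r)\le C_0\,E(f_n,\eps_n)$. Combining this with the first step yields
\[
E(f,r)\le \liminf_{n\to\infty}E(f_n,r)\le C_0\,\liminf_{n\to\infty}E(f_n,\eps_n).
\]
Taking the supremum over $r>0$ on the left-hand side gives the claim with $C=C_0$, which is manifestly independent of the sequence $\{\eps_n\}$.

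The main obstacle is the uniform scale-comparison in the second step: a naive chaining argument connecting $x,y$ with $d(x,y)<r$ by an $s$-chain of length $\lesssim r/s$ produces a constant that grows with $r/s$, and it is precisely the $2$-Poincar\'e inequality (combined with volume doubling and quasi-convexity of Cheeger spaces) that absorbs this growth into a universal constant. Once this estimate is granted, the rest of the proof is the standard Fatou-plus-diagonal argument.
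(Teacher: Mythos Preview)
Your first step—lower semicontinuity of $E(\cdot,r)$ at each fixed scale under strong $L^2$ convergence via Fatou—is correct and is used (implicitly) in the paper as well.

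The gap is in your second step. The pointwise-in-scale comparison $E(g,r)\le C_0\,E(g,s)$ for all $g\in L^2(X,\mu)$ and all $0<s\le r$ is not delivered by either reference you invoke. Theorem~2.1 of \cite{KS05} does not prove such an estimate; it \emph{assumes} the Kumagai--Sturm condition, which is precisely the conclusion of the present lemma. The remark you cite (from \cite{Bau22}) gives only $\sup_{r>0}E(f,r)\le C\liminf_{r\to 0}E(f,r)$ for $f\in KS^{1,2}(X)$; this is strictly weaker, since it compares to the $\liminf$ rather than to $E(f,s)$ at an arbitrary fixed scale, and it is stated only for $f\in KS^{1,2}(X)$, whereas you need it for $f_n$, which need not lie in $KS^{1,2}(X)$ merely because $E(f_n,\eps_n)<\infty$ at one scale. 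Your closing paragraph already flags this as ``the main obstacle''; in fact it is the entire content of the lemma, and you have deferred it rather than resolved it.

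The paper avoids the direct comparison $E(f_n,r)\le C\,E(f_n,\eps_n)$ altogether. It introduces a Lipschitz mollification $f_{n,\eps}$ of $f_n$ built from a partition of unity subordinate to an $\eps$-cover. The $2$-Poincar\'e inequality with Lipschitz constants gives $E(h,r)\le C\int_X(\mathrm{Lip}\,h)^2\,d\mu$ for every Lipschitz $h$ and every $r>0$, while a direct computation on the mollifier yields $\int_X(\mathrm{Lip}\,f_{n,\eps})^2\,d\mu\le C\,E(f_n,2\eps)$; combining these gives $E(f_{n,\eps},r)\le C\,E(f_n,2\eps)$ uniformly in $r$. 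One then checks $f_{n,\eps_n/2}\to f$ in $L^2(X,\mu)$ and applies Fatou to the sequence $f_{n,\eps_n/2}$ rather than to $f_n$. This is exactly the mechanism by which Poincar\'e ``absorbs the growth'' you allude to, but it operates on the mollified function, not on $f_n$ itself—and that distinction is what your outline is missing.
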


\begin{proof}
Note first, see for instance the proof of~\cite[Theorem 5.1]{Bau22}, that the $2$-Poincar\'e inequality~\eqref{E:p-Poincare} implies that for every $r>0$ and $f\in {\rm Lip}_{\rm loc}(X)$,
\begin{align}\label{sup kor so}
\int_X\fint_{B(x,r)}\frac{|f(x)-f(y)|^2}{r^2}d\mu(y)\,d\mu(x) \le C \int_{X} (\mathrm{Lip} f )(y)^2 d\mu (y).
\end{align}


Let now $f \in L^2(X,\mu)$ and $\{f_n\}_{n\geq 1}{\subset} L^2(X,\mu)$ such that $f_n \to f$ in $L^2(X,\mu)$. Fix $\eps>0$ and let $\{B_i^\eps=B(x_i,\eps)\}_i$ denote an $\eps$-covering of $X$ such that the family $\{B_i^{5\eps}\}_i$ has the bounded overlap property uniformly in $\eps$. Moreover, consider $\{\pip_i^\eps\}_i$ a $(C/\eps)$-Lipschitz partition of unity $\{\pip_i^\eps\}_i$ subordinated to this cover: that is, 
$0\le \pip_i^\eps\le 1$ on $X$, $\sum_i\pip_i^\eps=1$ on $X$, and $\pip_i^\eps=0$ in $X\setminus B_i^{2\eps}$. Setting
\begin{equation}\label{E:def_f_eps}
f_{n,\eps}:=\sum_i f_{n,B_i^\eps}\, \pip_i^\eps,
\end{equation}
where $f_{n,B_i^\eps}:=\vint_{B_i^\eps} f_nd\mu$, we note first that $f_{n,\eps}$ is locally Lipschitz. Indeed, for any fixed $j$ and any $x,y\in B_j^\eps$,
\begin{align*}
|f_{n,\eps}(x)-f_{n,\eps}(y)|&\le \sum_{i:B_i^{2\eps}\cap B_j^{2\eps}\ne\emptyset}|f_{n,B_i^\eps}-f_{n,B_j^\eps}||\pip_i^\eps(x)-\pip_i^\eps(y)|\\
 &\le \frac{C\, d(x,y)}{\eps} \sum_{i:B_i^{2\eps}\cap B_j^{2\eps}\ne\emptyset}
    \left(\vint_{B_i^\eps}\vint_{B(x,2\eps)}|f_n(y)-f_n(x)|^2\, d\mu(y)\, d\mu(x)\right)^{1/2}.
\end{align*}
Therefore, locally on $B_j^\eps$,
\begin{align*}
\mathrm{Lip} (f_{n,\eps})&\le \frac{C}{\eps}\sum_{i:B_i^{2\eps}\cap B_j^{2\eps}\ne\emptyset}
    \left(\vint_{B_i^\eps}\vint_{B(x,2\eps)}|f_n(y)-f_n(x)|^2\, d\mu(y)\, d\mu(x)\right)^{1/2}\\
    &\le C\left(\vint_{5B_j^\eps} \vint_{B(x,2\eps)}\frac{|f_n(y)-f_n(x)|^2}{\eps^2}\, d\mu(y)\, d\mu(x)\right)^{1/2}.
\end{align*}
Due to the bounded overlap property of the collection $5B_j^\eps$, we further get
\begin{align}\label{E:L2_Lip_est_01}
\int_X\mathrm{Lip} (f_{n,\eps})^2\, d\mu &\le \sum_j \int_{B_j^\eps}\mathrm{Lip} (f_{n,\eps})^2\, d\mu\notag\\
  &\le C\, \sum_j \int_{5B_j^\eps} \vint_{B(x,2\eps)}\frac{|f_n(y)-f_n(x)|^2}{\eps^2}\, d\mu(y)\, d\mu(x)\notag\\
  &\le C\, \int_X \vint_{B(x,2\eps)}\frac{|f_n(y)-f_n(x)|^2}{\eps^2}\, d\mu(y)\, d\mu(x).
 \end{align}
Thus, it follows from~\eqref{sup kor so} and~\eqref{E:L2_Lip_est_01} that for any $r>0$
\begin{equation}\label{eq:sup-gradient}
\frac{1}{r^2}\int_X\fint_{B(x,r)}|f_{n,\eps}(x)-f_{n,\eps}(y)|^2d\mu(y)\,d\mu(x) \le C\, \int_X \vint_{B(x,2\eps)}\frac{|f_n(y)-f_n(x)|^2}{\eps^2}\, d\mu(y)\, d\mu(x)
\end{equation}
and hence for every sequence $\eps_n \to 0^+$,
\begin{multline*}
 \liminf_{n \to +\infty} \frac{1}{r^2}\int_X\fint_{B(x,r)}|f_{n,\eps_n/2}(x)-f_{n,\eps_n/2}(y)|^2d\mu(y)\,d\mu(x)\\
 \le C\, \liminf_{n \to +\infty}  \int_X \vint_{B(x,\eps_n)}\frac{|f_n(y)-f_n(x)|^2}{\eps_n^2}\, d\mu(y)\, d\mu(x)
 =C\,\liminf_{n \to +\infty}E(f_n,\varepsilon_n).
\end{multline*}
The proof will therefore be complete once we prove that $f_{n,\eps_n/2}\to f$ in $L^2$ (recall that by assumption only $f_n\to f$ in $L^2$).
In a similar manner as before, we can show for any $\varepsilon>0$ and any $x\in B_j^\eps$ that
\begin{align*}
    |f_{n,\eps}(x)-f_n(x)|&=\Big| \sum_{i:B_i^{2\eps}\cap B_j^{2\eps}\ne\emptyset}\varphi_i^\eps (x)(f_{n,B_i^\eps}-f_n(x))\Big|\\
    &\leq \sum_{i:B_i^{2\eps}\cap B_j^{2\eps}\ne\emptyset}\Big|\fint_{B_i^\eps}(f_n(y)-f_n(x))\,d\mu(y)\Big|\\
    &\leq \sum_{i:B_i^{2\eps}\cap B_j^{2\eps}\ne\emptyset}\fint_{B_i^\eps}|f_n(y)-f_n(x)|\,d\mu(y)\\
    &\leq C\fint_{B(x,6\eps)}|f_n(x)-f_n(y)|d\mu(y)
\end{align*}
whence
\begin{align}\label{E:L2_Lip_est_02}
\int_X|f_{n,\eps}(x)-f_n(x)|^2\, d\mu(x) & 
  \le C  \int_X\left( \fint_{B(x,6\eps)}|f_n(x)-f_n(y)|d\mu(y) \right)^2\,d\mu(x) \\
  &\leq C  \int_X\fint_{B(x,6\eps)}|f_n(x)-f(x)|^2d\mu(y)\,d\mu(x)  \notag\\
  &+ C  \int_X\left( \fint_{B(x,6\eps)}|f(x)-f(y)|d\mu(y)\right)^2\,d\mu(x)  \notag\\
  &+ C  \int_X\fint_{B(x,6\eps)}|f(y)-f_n(y)|^2d\mu(y)\,d\mu(x)  \notag\\
 & \le C \| f -f_n\|^2_{L^2(X,\mu)} +C  \int_X\left( \fint_{B(x,6\eps)}|f(x)-f(y)|d\mu(y)\right)^2\,d\mu(x) .\notag
\end{align}
Notice now that from the Lebesgue differentiation theorem and from the fact that the maximal function of $f$ is in $L^2(X,\mu)$ we obtain by dominated convergence:
\[
\lim_{\eps \to 0} \int_X\left( \fint_{B(x,6\eps)}|f(x)-f(y)|d\mu(y)\right)^2\,d\mu(x)=0.
\]
Substituting $\eps$ by $\eps_n/2$ above, we obtain $f_{n,\eps_n/2} \to f$ in $L^2(X,\mu)$ as $n\to\infty$ and the proof is complete.
\end{proof}

\begin{theorem}\label{T:KS_as_Gamma}
There exists a strongly local and regular  Dirichlet form $(\mathcal{E},KS^{1,2} (X))$ on $L^2(X,\mu)$ with core $ {\rm Lip}(X) \cap C_c(X)$ such that for every $f \in KS^{1,2} (X)$
\begin{equation}\label{E:KS_comp_Gamma}
C_1  \sup_{r >0} E(f,r) \le \mathcal{E}(f,f) \le C_2 \liminf_{r\to 0^+} E(f,r).
\end{equation}
In addition, for a suitable positive sequence $\{r_n\}_{n\geq 1}$ converging to zero,
\begin{equation}\label{E:KS_as_Gamma}
    \mathcal{E}(f,f)=\Gamma{-}\!\!\lim_{n\to\infty}E(f,r_n).
\end{equation}
\end{theorem}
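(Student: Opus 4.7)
My plan is to deduce the theorem from Kumagai--Sturm's abstract $\Gamma$-compactness result \cite[Theorem 2.1]{KS05} (or equivalently \cite[Theorem 3.3]{Stu98a}), using Lemma \ref{L:Kumagai-Sturm_condition} as the key analytic input, and then upgrade the quadratic $\Gamma$-limit to a strongly local regular Dirichlet form by hand.

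\textbf{Step 1: extracting a $\Gamma$-limit.} The Kumagai--Sturm existence theorem asserts that along a suitable subsequence $r_n\to 0^+$ the functionals $E(\cdot,r_n)$ $\Gamma$-converge to a closed, quadratic, lower semicontinuous form on $L^2(X,\mu)$, provided one has a uniform control of the type
\[
\sup_r E(f,r) \le C\,\liminf_{n\to\infty} E(f_n,\eps_n)
\]
for every $f_n\to f$ in $L^2(X,\mu)$. This is exactly what Lemma \ref{L:Kumagai-Sturm_condition} provides. Applying the theorem gives a quadratic form $\mathcal{E}$ with $E(\cdot,r_n)\xrightarrow{\Gamma}\mathcal{E}$, which establishes \eqref{E:KS_as_Gamma}.

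\textbf{Step 2: identifying the domain and the comparability \eqref{E:KS_comp_Gamma}.} For the lower bound, the $\Gamma$-liminf property together with Lemma \ref{L:Kumagai-Sturm_condition} (applied to the constant sequence $f_n=f$) yields $C_1\sup_{r>0}E(f,r)\le \mathcal{E}(f,f)$. For the upper bound, testing the $\Gamma$-limsup definition with the constant sequence gives $\mathcal{E}(f,f)\le \liminf_{n\to\infty}E(f,r_n)\le \liminf_{r\to 0^+}E(f,r)$ up to passing to a further subsequence (the liminf is attained along some $r_n\to 0$ by extraction). Combining both inequalities shows $\dom\mathcal{E}=KS^{1,2}(X)$ and establishes \eqref{E:KS_comp_Gamma}.

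\textbf{Step 3: Markovian and strongly local.} Markovianity will be inherited from $\Gamma$-convergence by the standard ``composition with a $1$-Lipschitz contraction'' trick: if $v=\phi\circ u$ with $\phi$ a normal contraction and $u_n\to u$ in $L^2(X,\mu)$ is a recovery sequence for $u$ (so $\limsup E(u_n,r_n)\le\mathcal{E}(u,u)$), then $v_n:=\phi\circ u_n\to v$ in $L^2(X,\mu)$ and $E(v_n,r)\le E(u_n,r)$ pointwise in $r$; the $\Gamma$-liminf then gives $\mathcal{E}(v,v)\le\mathcal{E}(u,u)$. For strong locality, the plan is to exploit that the mollified Lipschitz approximants $f_{n,\eps}$ from the proof of Lemma \ref{L:Kumagai-Sturm_condition} satisfy the sharp bound \eqref{eq:sup-gradient}. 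Taking $f_{n,\eps_n/2}$ as a recovery sequence for any $f\in KS^{1,2}(X)$ and combining this with the $2$-Poincar\'e inequality \eqref{E:p-Poincare} (which forces the limiting energy to be controlled by the Cheeger energy $\mathbf{Ch}_2(f)$, hence by a local object), one obtains strong locality via the cutoff argument: if $u$ is constant near $\supp v$, choose Lipschitz cutoffs and show that the non-local tails in $E(u,r)E(v,r)$ vanish as $r\to 0$ because the overlap of their effective supports shrinks.

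\textbf{Step 4: regularity with core ${\rm Lip}(X)\cap C_c(X)$.} The locally Lipschitz approximants $f_{n,\eps}$ constructed in Lemma \ref{L:Kumagai-Sturm_condition} already give density of ${\rm Lip}(X)$ in $KS^{1,2}(X)$ in the $\mathcal{E}_1$-norm: combining the $L^2$-convergence shown there with the uniform bound \eqref{eq:sup-gradient} and the comparability \eqref{E:KS_comp_Gamma} yields $\mathcal{E}_1$-convergence along a subsequence. Multiplication by standard Lipschitz cutoffs (available by doubling, \cite[pp.~102--104]{HKST15}) produces compactly supported Lipschitz approximants without increasing energy by more than a controlled factor, since $\mathcal{E}\simeq \mathbf{Ch}_2$ satisfies a Leibniz-type bound on bounded functions. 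Density of ${\rm Lip}(X)\cap C_c(X)$ in $C_c(X)$ in the sup norm is standard on quasi-convex (hence connected) doubling spaces.

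The step I expect to be the main obstacle is the strong locality, since $\Gamma$-convergence of genuinely non-local forms does not automatically produce a strictly local limit --- one must genuinely use the Cheeger/Poincar\'e structure encoded in \eqref{E:p-Poincare} and the specific form of the mollifier $f_{n,\eps}$, rather than rely on a soft functional-analytic argument.
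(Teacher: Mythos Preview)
Your overall strategy---verify the Kumagai--Sturm condition via Lemma~\ref{L:Kumagai-Sturm_condition} and invoke \cite[Theorem~2.1]{KS05}---is exactly what the paper does. The difference is that you underestimate what \cite[Theorem~2.1]{KS05} delivers: that theorem already gives not just a closed quadratic $\Gamma$-limit, but a \emph{strongly local Dirichlet form} together with both inequalities in~\eqref{E:KS_comp_Gamma}. The paper's proof is therefore essentially one line: ``Lemma~\ref{L:Kumagai-Sturm_condition} plus \cite[Theorem~2.1, Remark~2]{KS05} give everything except regularity, and regularity follows from the known density of ${\rm Lip}(X)\cap C_c(X)$ in $KS^{1,2}(X)=N^{1,2}(X)$.'' Your Steps~2 and~3 are thus unnecessary, and in trying to redo them you introduce genuine gaps.

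Concretely: in Step~2 your upper bound argument is flawed. From the $\Gamma$-liminf with the constant sequence you only get $\mathcal{E}(f,f)\le\liminf_n E(f,r_n)$ along the \emph{fixed} sequence $\{r_n\}$ determined by the $\Gamma$-limit, and this quantity is in general $\ge \liminf_{r\to 0^+}E(f,r)$, not $\le$. You cannot pass to a further subsequence depending on $f$, since $\{r_n\}$ must be the same for all $f$. In Step~3 your strong locality sketch is too vague to be a proof; showing that $\Gamma$-limits of the non-local forms $E(\cdot,r)$ are strongly local is a non-trivial part of \cite{KS05} that uses the specific structure of the averaging kernels. In Step~4, a uniform energy bound plus $L^2$-convergence does not give $\mathcal{E}_1$-convergence; you would need at least a weak-compactness plus Mazur argument (as the paper does later in Proposition~\ref{P:KS_as_Gamma_dw}), or simply cite the standard density of Lipschitz functions in the Newtonian space $N^{1,2}(X)$, which is what the paper does.
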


\begin{proof}
By definition, the bilinear form $E(f,r)$ falls into the framework of~\cite{KS05} with the kernel $k_r(x,y)=r^{-2}\mu(B(x,r))^{-1}\mathbf{1}_{B(x,r)}(y)$. In view of~\cite[Remark 2]{KS05}, see also~\cite[Theorem 3.3]{Stu98a}  and Lemma ~\ref{L:Kumagai-Sturm_condition}, the conditions of~\cite[Theorem 2.1]{KS05} are satisfied, which provide the existence of a strongly local Dirichlet form with domain $KS^{1,2} (X)$ fulfilling~\eqref{E:KS_comp_Gamma} and~\eqref{E:KS_as_Gamma}. Finally, regularity comes from the fact that $\rm{Lip}(X) \cap C_c(X)$ is dense in $KS^{1,2} (X)$ for the norm $\| f \|_{L^2(X,\mu)} + \sup_{r >0} E(f,r)^{1/2}$ and dense in $C_c(X)$ for the supremum norm.

\end{proof}

\subsection{Mosco convergence}
While in general $\Gamma$-convergence is weaker than Mosco convergence, it is known~\cite[Lemma 2.3.2]{Mos94} that both convergences are equivalent when the sequence of forms is \emph{asymptotically compact}. In the present setting that means 
that  if $\{\eps_n\}_{n\geq 0}$ is a sequence with $\eps_n >0$ and $\lim_{n\to\infty}\eps_n=0$, then any sequence $\{f_n\}_{n\geq 1}\subset L^2(X,\mu)$ with 
\begin{equation*}
\liminf_{n\to\infty} (E(f_n,\varepsilon_n)+\|f_n\|_{L^2(X,\mu)}^2)<\infty
\end{equation*}
has a subsequence that converges strongly in $L^2(X,\mu)$. At this point we will assume $X$ to be compact and use a result obtained in~\cite[Theorem 8.1]{HK00}, which in particular says that a sequence $\{f_n\}_{n\geq 1}$ that satisfies the $2$-Poincar\'e inequality~\eqref{E:p-Poincare} with upper gradients $\{g_n\}_{n\geq 1}$ on the right hand side and
\begin{equation}\label{E:Koskela-Hajlasz-cond}
\sup_{n\geq 1}\big(\|f_n\|_{L^1(X,\mu)}+\|g_n\|_{L^2(X,\mu)}\big)<\infty    
\end{equation}
contains an $L^2$-convergent subsequence.

\begin{lem}\label{L:asymp_cpt_cond}
Assume that $X$ is compact. Let $\{\eps_n\}_{n\geq 0}$ with $\eps_n >0$ and $\lim_{n\to\infty}\eps_n=0$.  Any sequence $\{f_n\}_{n\geq 1}\subset L^2(X,\mu)$ such that 
\begin{equation}\label{E:asymp_cpt_cond}
\liminf_{n\to\infty} ( E(f_n,\varepsilon_n)+ \|f_n\|_{L^2(X,\mu)}^2) <+\infty
\end{equation} 
has a subsequence that converges strongly in $L^2(X,\mu)$.
\end{lem}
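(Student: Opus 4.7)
My plan is to replace each $f_n$ by a locally Lipschitz mollification $\tilde f_n$ at a scale comparable to $\varepsilon_n$, apply the Rellich-type compactness result~\cite[Theorem~8.1]{HK00} cited just before the lemma to the mollified sequence, and then transfer convergence back to $\{f_n\}$ via the triangle inequality. Concretely I would set $\tilde f_n := f_{n,\eta_n}$ with $\eta_n = \varepsilon_n/6$, following the construction~\eqref{E:def_f_eps}; the point of the factor $1/6$ is that all the balls appearing in the estimates \eqref{E:L2_Lip_est_01} and \eqref{E:L2_Lip_est_02}---of radii $2\eta_n$ and $6\eta_n$ respectively---are contained in $B(x,\varepsilon_n)$, which is the only scale at which we a priori control the Korevaar--Schoen form.

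After extracting a subsequence (still denoted $\{f_n\}$) along which $E(f_n,\varepsilon_n)+\|f_n\|_{L^2(X,\mu)}^2 \le M$, I would rerun the calculations from the proof of Lemma~\ref{L:Kumagai-Sturm_condition} with $\varepsilon$ replaced by $\eta_n$, invoking the doubling condition to absorb the finitely many geometric constants, and obtain three uniform estimates:
\begin{enumerate}[label=(\roman*)]
\item $\|\tilde f_n\|_{L^2(X,\mu)} \le C\|f_n\|_{L^2(X,\mu)}$, via the bounded overlap of $\{B_i^{\eta_n}\}$ combined with $|f_{n,B_i^{\eta_n}}|^2 \le \fint_{B_i^{\eta_n}}|f_n|^2\,d\mu$. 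Compactness of $X$ then yields $\sup_n\|\tilde f_n\|_{L^1(X,\mu)}<\infty$.
\item $\int_X (\mathrm{Lip}\,\tilde f_n)^2\,d\mu \le C\,E(f_n,\varepsilon_n) \le CM$, by \eqref{E:L2_Lip_est_01}.
\item $\|\tilde f_n-f_n\|_{L^2(X,\mu)}^2 \le C \varepsilon_n^2\,E(f_n,\varepsilon_n)\le CM\varepsilon_n^2$, by applying Jensen's inequality to the inner average in \eqref{E:L2_Lip_est_02}; hence $\|\tilde f_n-f_n\|_{L^2(X,\mu)}\to 0$.
\end{enumerate}

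Items~(i) and (ii), together with the fact that $\mathrm{Lip}(\tilde f_n)$ is an upper gradient of the locally Lipschitz function $\tilde f_n$ and that our space satisfies the 2-Poincar\'e inequality, place $\{\tilde f_n\}$ within the scope of \cite[Theorem~8.1]{HK00}: the uniform bound~\eqref{E:Koskela-Hajlasz-cond} holds, so there is a subsequence $\tilde f_{n_k}\to g$ in $L^2(X,\mu)$. Combined with (iii) and the triangle inequality, this gives $f_{n_k}\to g$ in $L^2(X,\mu)$.

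The main technical point requiring care is the choice of mollification scale: the naive choice $\eta_n=\varepsilon_n/2$ used in Lemma~\ref{L:Kumagai-Sturm_condition} would control the approximation error only by $\varepsilon_n^2E(f_n,3\varepsilon_n)$, but we have no a priori bound on $E(f_n,r)$ for $r>\varepsilon_n$ (the form $E(f,r)$ is not automatically monotone in $r$). Shrinking to $\eta_n\le \varepsilon_n/6$ forces the averaging ball $B(x,6\eta_n)$ inside $B(x,\varepsilon_n)$, reducing everything to $E(f_n,\varepsilon_n)$, which is the quantity we actually control by hypothesis.
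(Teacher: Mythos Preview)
Your proposal is correct and follows essentially the same approach as the paper: mollify at scale $\eta_n=\varepsilon_n/6$, use \eqref{E:L2_Lip_est_01} to bound $\|\mathrm{Lip}\,\tilde f_n\|_{L^2}$ by $E(f_n,\varepsilon_n)$, invoke \cite[Theorem~8.1]{HK00} on the mollified sequence, and transfer back via \eqref{E:L2_Lip_est_02} with Jensen. Your explicit explanation of why $\eta_n=\varepsilon_n/6$ (rather than the $\varepsilon_n/2$ of Lemma~\ref{L:Kumagai-Sturm_condition}) is needed---so that the ball $B(x,6\eta_n)$ in \eqref{E:L2_Lip_est_02} stays at scale $\varepsilon_n$---is a point the paper leaves implicit.
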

\begin{proof}

From \eqref{E:asymp_cpt_cond} one can extract an increasing sequence  $n_k \to +\infty$ such that
\[
\sup_{k \ge 1} ( E(f_{n_k},\varepsilon_{n_k})+ \|f_{n_k}\|_{L^2(X,\mu)}^2) <+\infty
\]
Consider the sequence $\{f_{n_k,\varepsilon_{n_k}/6}\}_{n\geq 1}$, where $f_{n_k,\varepsilon_{n_k}/6}$ is defined as in~\eqref{E:def_f_eps}. By construction, $f_{n_k,\varepsilon_{n_k}/6}$ is locally Lipschitz and thus~\cite[Lemma 6.2.6]{HKST15} implies $g_{n_k,\varepsilon_{n_k}/6}={\rm Lip}f_{n_k,\varepsilon_{n_k}/6}$ is an upper gradient of $f_{n_k,\varepsilon_{n_k}/6}$. 
 Since $X$ is compact, the sequence $f_{n_k}$ is bounded in $L^1(X,\mu)$, this easily implies from ~\eqref{E:def_f_eps} that $f_{n_k,\varepsilon_{n_k}/6}$ is also  bounded in $L^1(X,\mu)$. Moreover,
  it follows from~\eqref{E:L2_Lip_est_01}  that
\begin{align*}
    \|g_{n_k,\varepsilon_{n_k}/6}\|_{L^2(X,\mu)}^2&=\|{\rm Lip}f_{n_k,\varepsilon_{n_k}/6}\|_{L^2(X,\mu)}^2=\int_X({\rm Lip}f_{n_k,\varepsilon_{n_k}/6}(y))^2d\mu(y)\\
    &\leq C\int_X\fint_{B(x,\varepsilon_{n_k})}\frac{|f_{n_k}(y)-f_{n_k}(x)|^2}{\varepsilon_{n_k}^2}d\mu(y)\,d\mu(x).
\end{align*}

\medskip

We deduce

\[
\sup_{k \geq 1}\big(\|f_{n_k,\varepsilon_{n_k}/6}\|_{L^1(X,\mu)}+\|g_{n_k,\varepsilon_{n_k}/6}\|_{L^2(X,\mu)}\big)<\infty 
\]

From ~\cite[Theorem 8.1]{HK00}, we can therefore extract a subsequence that will still denote $n_k$ such that $f_{n_k,\varepsilon_{n_k}/6}$ converges in $L^2(X,\mu)$. Let  us denote  by $f$ this limit.

We claim that  $\{f_{n_k}\}_{k\geq 1}$ converges to $f$ in $L^2(X,\mu)$. Indeed, it follows from~\eqref{E:L2_Lip_est_02} that
\begin{align}\label{E:subseq_conv_Gaussian}
    \|f-f_{n_k}\|_{L^2(X,\mu)}&\leq \|f-f_{n_k,\varepsilon_{n_k}{/6}}\|_{L^2(X,\mu)}+\|f_{n_k,\varepsilon_{n_k}/6}-f_{n_k}\|_{L^2(X,\mu)}\notag\\
    &\leq \|f-f_{n_k,\varepsilon_{n_k}/6}\|_{L^2(X,\mu)}+C\int_X\fint_{B(x,\varepsilon_{n_k})}|f_{n_k}(x)-f_{n_k}(y)|^2\mu(y)\,d\mu(x)\notag\\
    &=\|f-f_{n_k,\varepsilon_{n_k}{/6}}\|_{L^2(X,\mu)}+C\varepsilon_{n_k}^2E(f_{n_k},\varepsilon_{n_k}).
\end{align}
It follows that $\{f_{n_k}\}_{k\geq 1}$ converges to $f$ in $L^2(X,\mu)$ because $\sup_{k \ge 1} E(f_{n_k},\varepsilon_{n_k}) <+\infty$ and $\varepsilon_{n_k}$ goes to zero when $k \to +\infty$.

\end{proof}
%
%
\begin{cor}\label{C:KS_rel_cpt}
Let $X$ be compact. Then $KS^{1,2}(X)$ is relatively compact in $L^2(X,\mu)$.
\end{cor}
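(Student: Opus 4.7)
The plan is to deduce this corollary directly from Lemma~\ref{L:asymp_cpt_cond}. The statement "$KS^{1,2}(X)$ is relatively compact in $L^2(X,\mu)$" is to be interpreted in the sense that any sequence bounded in the natural Korevaar--Schoen norm $\|f\|_{L^2(X,\mu)}+\sup_{r>0}E(f,r)^{1/2}$ admits a strongly $L^2$-convergent subsequence; this is a Rellich--Kondrachov type embedding statement.

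First I would take an arbitrary sequence $\{f_n\}_{n\geq 1}\subset KS^{1,2}(X)$ with
\[
M:=\sup_{n\geq 1}\Big(\|f_n\|_{L^2(X,\mu)}^2+\sup_{r>0}E(f_n,r)\Big)<+\infty.
\]
Then I would fix any positive sequence $\varepsilon_n\to 0^+$ (any choice suffices, since the bound above is uniform in $r$). For each $n$, trivially $E(f_n,\varepsilon_n)\leq\sup_{r>0}E(f_n,r)\leq M$, so
\[
\liminf_{n\to\infty}\Big(E(f_n,\varepsilon_n)+\|f_n\|_{L^2(X,\mu)}^2\Big)\leq M<+\infty.
\]
The hypothesis~\eqref{E:asymp_cpt_cond} of Lemma~\ref{L:asymp_cpt_cond} is thus verified, and extracting a strongly $L^2$-convergent subsequence of $\{f_n\}$ is immediate from that lemma.

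There is no real obstacle here: the substantive work, namely the mollification scheme producing locally Lipschitz approximants $f_{n,\varepsilon_n/6}$ together with the Haj\l asz--Koskela compactness result \cite[Theorem 8.1]{HK00}, has already been carried out in Lemma~\ref{L:asymp_cpt_cond}. The only point worth emphasizing in the write-up is that boundedness in the full $KS^{1,2}$-norm (which controls $\sup_{r>0}E(f_n,r)$) is strictly stronger than the $\liminf$ condition required by the lemma, so the corollary follows without additional argument.
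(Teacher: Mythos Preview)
Your argument is correct and is precisely the intended deduction: the paper states Corollary~\ref{C:KS_rel_cpt} without proof immediately after Lemma~\ref{L:asymp_cpt_cond}, so it is meant to follow exactly as you describe, by applying the lemma to a sequence bounded in the $KS^{1,2}$-norm with an arbitrary choice of $\varepsilon_n\to 0$. Your interpretation of ``relatively compact'' as a Rellich--Kondrachov embedding (bounded sets in $KS^{1,2}(X)$ are precompact in $L^2$) is the right one, since the space itself is of course unbounded.
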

This last corollary is the key step to obtain the Mosco convergence of the Dirichlet forms from Theorem~\ref{T:KS_as_Gamma}.
\begin{theorem}\label{T:Mosco_limit}
If $X$ is compact, there exists a strongly local Dirichlet form $(\mathcal{E},KS^{1,2} (X))$ on $L^2(X,\mu)$ such that for every $f \in KS^{1,2} (X)$
\begin{equation}\label{E:KS_comp_Mosco}
C_1  \sup_{r >0} E(f,r) \le \mathcal{E}(f,f) \le C_2 \liminf_{r >0} E(f,r).
\end{equation}
This Dirichlet form is the Mosco limit of the sequence in~\eqref{E:KS_as_Gamma}.
\end{theorem}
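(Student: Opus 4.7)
The plan is to upgrade the $\Gamma$-convergence established in Theorem~\ref{T:KS_as_Gamma} to Mosco convergence by invoking the asymptotic compactness provided by Lemma~\ref{L:asymp_cpt_cond}, following the well-known equivalence principle (cf.\ \cite[Lemma 2.3.2]{Mos94}) that $\Gamma$-convergence plus asymptotic compactness implies Mosco convergence. The bounds \eqref{E:KS_comp_Mosco} and the strong locality of $\mathcal{E}$ are immediate from Theorem~\ref{T:KS_as_Gamma}; the only genuine work is to verify that the liminf inequality, which Theorem~\ref{T:KS_as_Gamma} gives along strongly $L^2$-convergent sequences, in fact holds along weakly $L^2$-convergent sequences.

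First I would fix the sequence $r_n \to 0$ produced by Theorem~\ref{T:KS_as_Gamma} so that $\mathcal{E}$ is the $\Gamma$-limit of $E(\,\cdot\,,r_n)$. The recovery-sequence part of the Mosco definition is literally the same as the limsup part of $\Gamma$-convergence, so condition~(b) of Definition~\ref{D:Mosco_conv} is free. For condition~(a), I would take an arbitrary sequence $\{f_n\} \subset L^2(X,\mu)$ with $f_n \rightharpoonup f$ weakly in $L^2(X,\mu)$ and aim to show
\[
\liminf_{n \to \infty} E(f_n, r_n) \geq \mathcal{E}(f,f).
\]
If the liminf on the left is $+\infty$ there is nothing to prove, so I may pass to a subsequence, still denoted $\{f_n\}$, along which $E(f_n, r_n)$ converges to a finite value. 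Since weakly convergent sequences are norm-bounded, $\sup_n \|f_n\|_{L^2}^2 < \infty$, and therefore
\[
\liminf_{n \to \infty} \bigl( E(f_n, r_n) + \|f_n\|_{L^2(X,\mu)}^2 \bigr) < +\infty.
\]
Here is where compactness of $X$ enters: Lemma~\ref{L:asymp_cpt_cond} produces a further subsequence $\{f_{n_k}\}$ converging strongly in $L^2(X,\mu)$. Its strong limit must coincide with the weak limit $f$ by uniqueness of weak limits. Now applying the liminf part of Theorem~\ref{T:KS_as_Gamma} along this strongly convergent subsequence yields
\[
\liminf_{k \to \infty} E(f_{n_k}, r_{n_k}) \geq \mathcal{E}(f,f),
\]
and since the full sequence $E(f_n, r_n)$ was already convergent, the desired liminf inequality follows. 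A standard subsequence-of-every-subsequence argument removes the initial passage to subsequences and gives~(a) of Definition~\ref{D:Mosco_conv} for the full sequence.

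The main (indeed only) subtle point is this subsequence extraction: one needs to be sure that the strong $L^2$-limit given by Lemma~\ref{L:asymp_cpt_cond} is precisely $f$, which relies on the uniqueness of weak limits together with the fact that strong $L^2$-convergence implies weak $L^2$-convergence. Once this is in place, the remaining properties of $\mathcal{E}$ listed in the statement (strong locality, domain $KS^{1,2}(X)$, and the two-sided comparison \eqref{E:KS_comp_Mosco}) are inherited directly from Theorem~\ref{T:KS_as_Gamma} since the form itself is unchanged.
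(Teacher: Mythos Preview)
Your proposal is correct and follows essentially the same approach as the paper: the paper's proof is a one-line invocation of Theorem~\ref{T:KS_as_Gamma}, Lemma~\ref{L:asymp_cpt_cond}, and \cite[Lemma~2.3.2]{Mos94}, and what you have written is precisely an unpacking of that last citation (asymptotic compactness upgrades $\Gamma$-convergence to Mosco convergence). Your handling of the subsequence extraction and identification of the limit via uniqueness of weak limits is exactly the mechanism behind Mosco's lemma.
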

\begin{proof}
The claim follows from Theorem~\ref{T:KS_as_Gamma}, Lemma~\ref{L:asymp_cpt_cond} and~\cite[Lemma 2.3.2]{Mos94}.
\end{proof}
 
%

\subsection{2-Poincar\'e inequality with respect  to the energy measures}
One of the principal characteristics of a Cheeger space is the 2-Poincar\'e from Assumption~\ref{A:2PI_Lip}. However, in the Dirichlet space literature including the seminal work of Sturm~\cite{Stu98a,Stu98b}, the right hand side of the assumed 2-Poincar\'e inequality involves the engergy measure of the function instead of the Lipschitz constant. In this section we prove this 2-Poincar\'e with respect to the energy measures.

\begin{theorem}\label{T:2PI_Gamma}
There exists $C>0$ and $\Lambda>1$ such that 
\begin{equation}\label{E:2PI_Gamma}
\int_{B(x,R)}|f(y)-f_{B(x,R)}|^2d\mu(z)\leq CR^2\int_{B(x,\Lambda R)}d\Gamma(f,f)
\end{equation}
for any $f\in KS^{1,2}(X)$, $x\in X$ and $R>0$.
\end{theorem}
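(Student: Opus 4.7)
The natural strategy is to pass through the minimal $2$-weak upper gradient $g_f$ as a bridge between the Lipschitz constant appearing in the assumed Poincar\'e inequality and the energy measure $\Gamma(f,f)$. Two ingredients combine: the equivalence of the $2$-Poincar\'e inequality with Lipschitz constants and the $2$-Poincar\'e inequality with upper gradients, as recalled in the remark following Assumption~\ref{A:2PI_Lip} (and~\cite[Theorem 8.4.2]{HKST15}); and the pointwise Radon-Nikodym comparison $d\Gamma(f,f)/d\mu\simeq g_f^2$ from the second remark after Theorem~\ref{T:main}.

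First, since $KS^{1,2}(X)=N^{1,2}(X)$ by the remark following~\eqref{E:def_E2}, any $f\in KS^{1,2}(X)$ admits a minimal $2$-weak upper gradient $g_f\in L^2(X,\mu)$, and the $2$-Poincar\'e inequality with upper gradients gives
\[
\int_{B(x,R)}|f(y)-f_{B(x,R)}|^2\,d\mu(y) \le C R^2 \int_{B(x,\lambda R)} g_f^2 \,d\mu,
\]
with the same constants $C$ and $\lambda$ as in Assumption~\ref{A:2PI_Lip}. Second, the remark alluded to above, which builds on Le Jan's theorem~\cite[Proposition 1.5.5(b)]{LJ} applied to the global comparability $\mathcal{E}(f,f)\simeq\tilde{\mathcal{E}}(f,f)$ with the Dirichlet form $\tilde{\mathcal{E}}$ coming from the Cheeger differentiable structure of~\cite[Section 2.3]{MMS16}, asserts the pointwise almost everywhere comparison
\[
\frac{d\Gamma(f,f)}{d\mu}\simeq g_f^2,
\]
so that $g_f^2\,d\mu\le C\,d\Gamma(f,f)$ as measures on $X$. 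Plugging this into the previous display yields~\eqref{E:2PI_Gamma} with $\Lambda=\lambda$, up to an absorbed multiplicative constant.

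The main subtlety is that the Radon-Nikodym comparison is not established from scratch in this section but invoked through a chain of already-cited results. A more self-contained route would be to prove~\eqref{E:2PI_Gamma} first for $f\in\mathrm{Lip}(X)\cap C_c(X)$, by combining Assumption~\ref{A:2PI_Lip} with Cheeger's pointwise identification $g_f\simeq \mathrm{Lip}\, f$ available in $2$-Poincar\'e spaces, and then extend to all of $KS^{1,2}(X)$ by density in the $\mathcal{E}_1$-norm. The extension is legitimate because $\mathrm{Lip}(X)\cap C_c(X)$ is a core of $(\mathcal{E},KS^{1,2}(X))$ by Theorem~\ref{T:KS_as_Gamma}, and because for any ball $B$ one has $|\sqrt{\Gamma(f_n,f_n)(B)}-\sqrt{\Gamma(f,f)(B)}|\le \sqrt{\Gamma(f_n-f,f_n-f)(B)}$, so that $\mathcal{E}_1$-convergence of $f_n$ to $f$ forces $\Gamma(f_n,f_n)(B)\to \Gamma(f,f)(B)$ while $L^2$-convergence handles the left-hand side. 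The harder point in either route is producing the pointwise link between $g_f^2$ and $d\Gamma(f,f)/d\mu$; the rest is a straightforward transplantation of the assumed inequality.
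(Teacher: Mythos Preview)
Your route is logically valid but differs from the paper's. You invoke the pointwise comparison $g_f^2\,d\mu\simeq d\Gamma(f,f)$ via Le~Jan's domination theorem applied to the global equivalence $\mathcal{E}\simeq\tilde{\mathcal{E}}$ with the Cheeger-structure Dirichlet form; since that equivalence depends only on Theorem~\ref{T:KS_as_Gamma} and external results from~\cite{MMS16},~\cite{LJ},~\cite{Bau22}, there is no circularity, and the conclusion then follows at once from the upper-gradient Poincar\'e inequality. The paper avoids both the Cheeger differentiable structure and Le~Jan entirely: it first establishes Proposition~\ref{P:limsup_vs_Gamma}, bounding the \emph{localized} Korevaar--Schoen energy over a ball by $\int d\Gamma(f,f)$ on a slightly larger ball, which comes directly from the $\Gamma$-limit description via~\cite[Remark~1(f)]{KS05} and a cutoff. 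Then, for locally Lipschitz $f$, it applies Assumption~\ref{A:2PI_Lip} to the partition-of-unity approximant $f_\varepsilon$, bounds $\int_B(\mathrm{Lip}\,f_\varepsilon)^2\,d\mu$ by a localized Korevaar--Schoen energy of $f$ at scale $\varepsilon$, and passes to the limit $\varepsilon\to 0$ through Proposition~\ref{P:limsup_vs_Gamma}; the extension to general $f\in KS^{1,2}(X)$ by $\mathcal{E}_1$-density is exactly as you describe. Your argument is shorter but imports heavier external machinery; the paper's is self-contained within the $\Gamma$-limit framework, which matters structurally since the absolute continuity of $\Gamma(f,f)$ with respect to $\mu$ is derived only \emph{afterwards} (Theorem~\ref{T:Gamma_vs_Lip}) rather than being taken as input here.
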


First, we use the locality of the Dirichlet form $(\mathcal{E},KS^{1,2}(X))$.
\begin{prop}\label{P:limsup_vs_Gamma}
There exists a constant $C>0$ such that 
\begin{equation}\label{E:limsup_vs_Gamma}
\limsup_{r\to 0^+}\frac{1}{r^2}\int_{B(x,R)}\fint_{B(z,r)}|f(z)-f(y)|^2d\mu(y)\,d\mu(z)\leq C\int_{B(x,\lambda R)}d\Gamma(f,f)
\end{equation}
for any $f\in KS^{1,2}(X)\cap C_0(X)$, $x\in X$, $\lambda>1$ and $R>0$.
\end{prop}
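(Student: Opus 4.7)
The plan is to localize the Korevaar–Schoen type quantity on $B(x,R)$ by multiplying $f$ by a Lipschitz cutoff, then to use the global upper bound of Theorem~\ref{T:KS_as_Gamma} on the localized function, and finally to exploit strong locality of $(\mathcal{E},KS^{1,2}(X))$ to express the resulting energy in terms of $d\Gamma(f,f)$ on the enlarged ball $B(x,\lambda R)$.

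Concretely, I would fix an intermediate radius $R'\in(R,\lambda R)$ and a cutoff $\phi\in\mathrm{Lip}(X)\cap C_c(X)$ with $0\le\phi\le 1$, $\phi\equiv 1$ on $B(x,R')$, $\mathrm{supp}\,\phi\subset B(x,\lambda R)$, and $\mathrm{Lip}\,\phi\le C/(R'-R)$. Setting $c:=f_{B(x,\lambda R)}$ and $g:=\phi(f-c)$, for $r<R'-R$ and $z\in B(x,R)$ the entire ball $B(z,r)$ lies in $\{\phi=1\}$, so $|g(z)-g(y)|=|f(z)-f(y)|$ for every $y\in B(z,r)$. Enlarging the outer integration to all of $X$ and dividing by $r^{2}$ yields
\[
\frac{1}{r^{2}}\int_{B(x,R)}\fint_{B(z,r)}|f(z)-f(y)|^{2}\,d\mu(y)\,d\mu(z)\le E(g,r),
\]
and Theorem~\ref{T:KS_as_Gamma} then gives $\limsup_{r\to 0^{+}}E(g,r)\le C_{1}^{-1}\mathcal{E}(g,g)=C_{1}^{-1}\int_{X}d\Gamma(g,g)$.

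It then remains to bound $\int_{X}d\Gamma(g,g)$ by $C\int_{B(x,\lambda R)}d\Gamma(f,f)$. Strong locality forces $d\Gamma(g,g)=d\Gamma(f,f)$ on $\{\phi=1\}$ and $d\Gamma(g,g)=0$ outside $\mathrm{supp}\,\phi$, while on the annulus $\{0<\phi<1\}$ the Leibniz rule together with $(a+b)^{2}\le 2(a^{2}+b^{2})$ gives $d\Gamma(g,g)\le 2\phi^{2}d\Gamma(f,f)+2(f-c)^{2}d\Gamma(\phi,\phi)$. The first piece is dominated by $d\Gamma(f,f)$ on $B(x,\lambda R)$; for the second, the Lipschitz bound on $\phi$ combined with the comparison $d\Gamma(\phi,\phi)\le C(\mathrm{Lip}\,\phi)^{2}d\mu$ yields $\int (f-c)^{2}d\Gamma(\phi,\phi)\le C(R'-R)^{-2}\int_{B(x,\lambda R)}(f-f_{B(x,\lambda R)})^{2}d\mu$.

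The main obstacle is absorbing this last $L^{2}$ quantity back into $\int d\Gamma(f,f)$ without invoking the yet-to-be-proved Theorem~\ref{T:2PI_Gamma}. I would resolve this by appealing to the 2-Poincaré inequality available in the Cheeger space for the minimal weak upper gradient $g_{f}$ (which is built into Assumption~\ref{A:2PI_Lip} via the standard Lipschitz-approximation machinery in $N^{1,2}(X)$), together with the identification $g_{f}^{2}\,d\mu\simeq d\Gamma(f,f)$ that is implicit in Theorem~\ref{T:KS_as_Gamma}, at the cost of a further enlargement of the ball. A conceptually cleaner alternative is to bypass the cutoff altogether by approximating $f\in KS^{1,2}(X)\cap C_{0}(X)$ in the $\mathcal{E}_{1}$-norm by core elements $f_{n}\in\mathrm{Lip}(X)\cap C_{c}(X)$: for Lipschitz $f_n$ the pointwise bound $\limsup_{r\to 0^{+}}r^{-2}\fint_{B(z,r)}|f_{n}(z)-f_{n}(y)|^{2}\,d\mu(y)\le(\mathrm{Lip}\,f_{n})^{2}(z)$ combined with reverse Fatou handles the integral on $B(x,R)$, while the residual $f-f_n$ is absorbed by $\sup_{r>0}E(f-f_{n},r)\le C_{1}^{-1}\mathcal{E}(f-f_{n},f-f_{n})\to 0$ from Theorem~\ref{T:KS_as_Gamma}.
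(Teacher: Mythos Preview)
Your approach is substantially more involved than the paper's, and parts of it do not close. The paper's proof is essentially one line: it invokes \cite[Remark~1(f)]{KS05}, which for any nonnegative $\phi\in C_0(X)$ gives
\[
\int_X\phi^2\,d\Gamma(f,f)\ \ge\ C\,\limsup_{r\to 0^+}\frac{1}{r^2}\int_X\fint_{B(z,r)}|f(z)-f(y)|^2\phi(z)\phi(y)\,d\mu(y)\,d\mu(z),
\]
and then simply inserts a cutoff $\phi$ with $\phi\equiv 1$ on $B(x,R)$ and $\mathrm{supp}\,\phi\subset B(x,\lambda R)$. No Leibniz rule, no Poincar\'e inequality, no identification of $d\Gamma$ with $g_f^2\,d\mu$ is needed; the localization is already built into the Kumagai--Sturm framework.

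In your argument there are two soft spots. First, to bound $\int(f-c)^2\,d\Gamma(\phi,\phi)$ you invoke $d\Gamma(\phi,\phi)\le C(\mathrm{Lip}\,\phi)^2\,d\mu$, which is exactly Theorem~\ref{T:Gamma_vs_Lip}; in the paper that result comes \emph{after} the present proposition (its proof is independent, so this is not circular, but you should at least flag the reordering). Second, and more seriously, the measure-level comparison $g_f^2\,d\mu\simeq d\Gamma(f,f)$ is \emph{not} implicit in Theorem~\ref{T:KS_as_Gamma}: that theorem only gives the \emph{global} comparison $\mathcal{E}(f,f)\simeq\sup_r E(f,r)\simeq\int g_f^2\,d\mu$. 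Passing from global domination of quadratic forms to domination of energy measures requires Le~Jan's theorem applied to a second genuine Dirichlet form (the Cheeger differential-structure form $\tilde{\mathcal{E}}$ from \cite{MMS16}), which is significant additional input you have not justified.

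Your ``conceptually cleaner alternative'' has a genuine gap: after bounding the localized $\limsup$ for a Lipschitz approximant $f_n$ by $\int_{B(x,R)}(\mathrm{Lip}\,f_n)^2\,d\mu$, there is no mechanism offered to get from $(\mathrm{Lip}\,f_n)^2\,d\mu$ to $d\Gamma(f,f)$ on $B(x,\lambda R)$. The inequality you would need, $(\mathrm{Lip}\,f_n)^2\,d\mu\le C\,d\Gamma(f_n,f_n)$, is the \emph{reverse} of Theorem~\ref{T:Gamma_vs_Lip} and is not available here.
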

\begin{proof}
By virtue of~\cite[Remark 1(f)]{KS05}, there exists $C>0$ such that
\begin{equation}\label{E:limsup_vs_Gamma_01}
\int_X\phi^2 \,d\Gamma(f,f)\geq C\limsup_{r\to 0^+}\frac{1}{r^2}\int_X\fint_{B(x,R)}|f(x)-f(y)|^2\phi(x) \phi (y) \,d\mu(y)\,d\mu(x)
\end{equation}
for any $x\in X$, $R>0$, $f\in KS^{1,2}(X)\cap C_0(X)$ and non-negative $\phi\in C_0(X)$. Choosing $\phi$ to be a cutoff function with $0 \le \phi \le 1$,  $\phi\equiv 1$ on $B(x,R)$ and $\supp\phi\subseteq B(x,\lambda R)$, $\lambda>1$ we obtain
\begin{align*}
    &\limsup_{r\to 0^+}\frac{1}{r^2}\int_{B(x,R)}\fint_{B(z,r)}|f(z)-f(y)|^2d\mu(y)\,d\mu(z)\\
    &\leq \limsup_{r\to 0^+}\frac{1}{r^2}\int_{B(x,R)}\fint_{B(x,r)}|f(z)-f(y)|^2\phi(z)\phi(y) d\mu(y)\,d\mu(z)\\
    &\leq \limsup_{r\to 0^+}\frac{1}{r^2}\int_{X}\fint_{B(x,r)}|f(z)-f(y)|^2 \phi (z)\phi(y) d\mu(y)\,d\mu(z)\\
    &\leq C\int_X\phi^2 \,d\Gamma(f,f)=C\int_{B(x,\lambda R)}\phi^2\,d\Gamma(f,f)\\
    &\leq C\int_{B(x,\lambda R)}d\Gamma(f,f).
\end{align*}
\end{proof}

We now proceed to the proof of the 2-Poincar\'e inequality~\eqref{E:2PI_Gamma}, first for  locally Lipschitz functions with compact support, and afterwards for any function in $KS^{1,2}(X)$.

\begin{proof}[Proof of Theorem~\ref{T:2PI_Gamma}]
\begin{enumerate}[wide=0em,label=\emph{Step} \arabic*:,itemsep=.5em]
\item Let $f\in {\rm Lip}_{\rm loc}(X)\cap C_0(X)$. As in the proof of Lemma~\ref{L:Kumagai-Sturm_condition}, choose a $\varepsilon$-covering of $X$ and define
\begin{equation}\label{E:def_f_eps_2}
f_{\eps}:=\sum_i f_{B_i^\eps}\, \pip_i^\eps,
\end{equation}
where $f_{B_i^\eps}=\vint_{B_i^\eps} fd\mu$. Analogously to that proof of Lemma~\ref{L:Kumagai-Sturm_condition}, $f_{\eps}$ is locally Lipschitz and 
\begin{equation}\label{E:limsup_vs_Lip_01}
    ({\rm Lip}f_{\varepsilon})^2\leq \frac{C}{\varepsilon^2}\fint_{5B_i^\varepsilon}\fint_{B(z,2\varepsilon)}|f(z)-f(y)|^2d\mu(y)\,d\mu(z)
\end{equation}
on each $B_i^\varepsilon$. 

\item  Then, for $R>0$ and $0 <\varepsilon <R$
\begin{align}
    \int_{B(x,R)}({\rm Lip}f_{\varepsilon})^2d\mu&\leq \sum_{i, B_i^\varepsilon \cap B(x,R) \neq \emptyset  } \int_{B_i^\varepsilon}({\rm Lip}f_{\varepsilon})^2(x)d\mu(x)\notag\\
    &\leq C \sum_{i, B_i^\varepsilon \cap B(x,R) \neq \emptyset  }\int_{B_i^\varepsilon}\frac{1}{\varepsilon^2}\fint_{5B_i^\varepsilon}\fint_{B(z,2\varepsilon)}|f(z)-f(y)|^2d\mu(y)\,d\mu(z)\,d\mu(x)\notag\\
    &\leq C\sum_{i, B_i^\varepsilon \cap B(x,R) \neq \emptyset  } \int_{5B_i^\varepsilon}\frac{1}{\varepsilon^2}\fint_{B(z,2\varepsilon)}|f(z)-f(y)|^2d\mu(y)\,d\mu(z)\notag\\
    &\leq C\int_{B(x,7R)}\frac{1}{\varepsilon^2}\fint_{B(z,2\varepsilon)}|f(z)-f(y)|^2d\mu(y)\,d\mu(z).\label{E:limsup_vs_Lip_02}
\end{align}

\item
By construction, see also the proof of Lemma~\ref{L:Kumagai-Sturm_condition}, we have $f_{\varepsilon/2}\to f$ in $L^2(X,\mu)$ as $\varepsilon\to 0^+$. Further, the triangle inequality implies
\begin{align}
    \int_{B(x,R)}|f(z)-f_{B(x,R)}|^2d\mu(z)&\leq 2\int_{B(x,R)}|f(z)-f_{\varepsilon/2}(z)|^2d\mu(z)\notag\\
    &+2\int_{B(x,R)}|f_{\varepsilon/2}(z)-(f_{n,\varepsilon/2})_{B(x,R)}|^2d\mu(z)\notag\\
    &+2\int_{B(x,R)}|(f_{\varepsilon/2})_{B(x,R)}-f_{B(x,R)}|^2d\mu(z).\label{E:limsup_vs_Lip_03}
\end{align}

\item The first term in~\eqref{E:limsup_vs_Lip_03} is bounded by $\|f-f_{\varepsilon/2}\|_{L^2(X,\mu)}^2$ and the third also using Cauchy-Schwarz inequality because
\begin{align*}
 \int_{B(x,R)}|(f_{\varepsilon/2})_{B(x,R)}-f_{B(x,R)}|^2d\mu(z) 
 &= \mu(B(x,R))\Big|\fint_{B(x,R)}(f_{\varepsilon/2}(y)-f(y))\,d\mu(y)\Big|^2\notag\\
 &\leq \|f-f_{\varepsilon/2}\|_{L^2(X,\mu)}^2.
\end{align*}

\item
For the second term in~\eqref{E:limsup_vs_Lip_03}, since $f_{\varepsilon}\in{\rm Lip}_{\rm loc}(X)\cap C_0(X)$, the 2-Poincar\'e inequality~\eqref{E:p-Poincare} and~\eqref{E:limsup_vs_Lip_02} imply
\begin{align*}
    \int_{B(x,R)}|f_{\varepsilon/2}(z)-(f_{\varepsilon/2})_{B(x,R)}|^2d\mu(z)
    &\leq CR^2\int_{B(x,\lambda R)}({\rm Lip}f_{\varepsilon/2})^2d\mu\notag\\
    &\leq CR^2\int_{B(x,7\lambda R)}\frac{1}{\varepsilon^2}\int_{B(x,\varepsilon)}|f(z)-f(y)|^2d\mu(y)\,d\mu(z).
\end{align*}

\item Combining the last two steps with~\eqref{E:limsup_vs_Lip_03} yields
\begin{multline*}
 \int_{B(x,R)}|f(z)-f_{B(x,R)}|^2d\mu(z)\\
 \leq 2 \|f-f_{\varepsilon/2}\|_{L^2(X,\mu)}^2+ CR^2\int_{B(x,7\lambda R)}\frac{1}{\varepsilon^2}\int_{B(x,\varepsilon)}|f(z)-f(y)|^2d\mu(y)\,d\mu(z)
\end{multline*}
and taking $\limsup_{\varepsilon\to 0^+}$ on both sides it follows from Proposition~\ref{P:limsup_vs_Gamma} that, for $f\in {\rm Lip}_{\rm loc}(X)\cap C_0(X)$,
\begin{equation}\label{E:limsup_vs_Lip_04}
     \int_{B(x,R)}|f(z)-f_{B(x,R)}|^2d\mu(z)\leq CR^2\int_{B(x,\lambda'R)}d\Gamma (f,f),
\end{equation}
for some $\lambda'>1$.
\item Let now $f\in KS^{1,2}(X)$. Recall that ${\rm Lip}_{\rm loc}(X)\cap C_0(X)$ is a core for the Dirichlet form $(\mathcal{E},KS^{1,2}(X))$, hence there is a sequence $\{f_n\}_{n\geq 0} \subset {\rm Lip}_{\rm loc}(X)\cap C_0(X)$ that converges to $f$ in the $\mathcal{E}_1$ norm. Applying the triangle inequality and~\eqref{E:limsup_vs_Lip_04} we obtain
\begin{align}
    \int_{B(x,R)}|f(z)-f_{B(x,R)}|^2d\mu(z)&\leq 2\int_{B(x,R)}|f(z)-f_n(z)|^2d\mu(z)\notag\\
    &+2\int_{B(x,R)}|f_n(z)-(f_n)_{B(x,R)}|^2d\mu(z)\notag\\
    &+2\int_{B(x,R)}|(f_n)_{B(x,R)}-f_{B(x,R)}|^2d\mu(z)\notag\\
    &\leq 4 \|f-f_n\|_{L^2(X,\mu)}^2
    + CR^2\int_{B(x,\lambda'R)}d\Gamma (f_n,f_n)\label{E:limsup_vs_Lip_05}
\end{align}
with $\lambda'>1$ possibly different than $\lambda$. 
\item Consider a cutoff function $\phi$ with $\phi\equiv 1$ in $B(x,\lambda'R)$ and $\supp\phi\subseteq B(x,\tilde{\lambda}R)$ for some $\tilde{\lambda}>\lambda'$. Then,
\begin{align*}
    \int_{B(x,\lambda'R)}d\Gamma (f_n,f_n)\leq \int_{X}\phi\, d\Gamma (f_n,f_n)
\end{align*}
converges to $\int_{X}\phi d\Gamma (f,f)$ as $n\to\infty$ because
\begin{equation*}
    \bigg|\Big(\int_X\phi\, d\Gamma (f_n,f_n)\Big)^{1/2}-\Big(\int_X\phi\, d\Gamma (f,f)\Big)^{1/2}\bigg|\leq \|\phi\|_\infty^{1/2}\mathcal{E}(f_n-f,f_n-f)^{1/2}
\end{equation*}
see e.g.~\cite[p.123]{FOT11}, and the right hand side vanishes as $n\to \infty$.
\item Taking $n\to \infty$ in~\eqref{E:limsup_vs_Lip_05} finally yields
\begin{equation*}
    \int_{B(x,R)}|f(z)-f_{B(x,R)}|^2d\mu(z)\leq CR^2\int_{X}\phi d\Gamma (f,f)\leq CR^2\int_{B(x,\tilde{\lambda}R)}d\Gamma (f,f)
\end{equation*}
as we wanted to prove.
\end{enumerate}
\end{proof}




\subsection{Energy measures and  strict locality of  the Dirichlet form}
The next ingredient in proving the strict locality of the Dirichlet form $(\mathcal{E},KS^{1,2}(X))$ consists in showing that the energy measure $\Gamma(f,f)$ associated with a locally Lipschitz function $f\in KS^{1,2}(X)$ is absolutely continuous with respect to the underlying measure $\mu$. This is the statement of Theorem~\ref{T:Gamma_vs_Lip}, whose proof is inspired by~\cite[Proposition, p.389]{Mos94}. 

\medskip

Notice that, at this stage, we do \emph{not} know yet that the topology induced by the intrinsic distance of the Dirichlet form is the same as the one induced by the original metric. When the latter is assumed, Theorem~\ref{T:Gamma_vs_Lip} is a consequence of Theorem~\ref{T:2PI_Gamma} and the volume doubling property of $\mu$, c.f.~\cite[Lemma 2.11]{BV2}. In the present case it is possible to obtain a similar bound for the Radon-Nikodym derivative. 

\begin{theorem}\label{T:Gamma_vs_Lip}
For any $f\in {\rm Lip}_{\rm loc}(X) \cap KS^{1,2}(X)$,
\begin{equation}\label{E:Gamma_vs_Lip}
    d\Gamma(f,f)\leq C({\rm Lip}f)^2d\mu.
\end{equation}
\end{theorem}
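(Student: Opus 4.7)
The plan is to establish the measure-theoretic bound $d\Gamma(f,f)\le C(\mathrm{Lip}\, f)^2 d\mu$ in three steps: first prove a ``global Lipschitz ceiling'' $\mathcal{E}(g,g)\le C\int(\mathrm{Lip}\, g)^2 d\mu$ valid for $g\in\mathrm{Lip}(X)\cap C_c(X)$; then localize it via a cutoff and the $2$-Poincar\'e inequality of Assumption~\ref{A:2PI_Lip} to obtain a ball-by-ball inequality
\[
\Gamma(f,f)(B(x_0,r))\le C\int_{B(x_0,\Lambda r)}(\mathrm{Lip}\, f)^2 d\mu;
\]
and finally pass to the pointwise density bound by Lebesgue differentiation on the doubling space.

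For the global ceiling, observe that for $g\in\mathrm{Lip}(X)\cap C_c(X)$ the integrand $r^{-2}\fint_{B(x,r)}|g(x)-g(y)|^2 d\mu(y)$ is dominated by $\|\mathrm{Lip}\, g\|_\infty^2$ times the indicator of the $1$-neighborhood of $\supp g$ (integrable by~\eqref{eq:mass-bounds}) and has pointwise $\limsup_{r\to 0^+}$ bounded by $(\mathrm{Lip}\, g(x))^2$. The reverse Fatou lemma gives $\limsup_{r\to 0^+} E(g,r)\le\int(\mathrm{Lip}\, g)^2 d\mu$, and combining with the upper estimate $\mathcal{E}(g,g)\le C_2\liminf_{r\to 0^+} E(g,r)$ from Theorem~\ref{T:KS_as_Gamma} yields $\mathcal{E}(g,g)\le C\int(\mathrm{Lip}\, g)^2 d\mu$.

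To localize, fix $x_0\in X$, $r>0$ and choose a Lipschitz cutoff $\phi$ with $\phi\equiv 1$ on an open neighborhood of $\overline{B(x_0,r)}$, $\supp\phi\subset B(x_0,2r)$, and $\mathrm{Lip}\,\phi\le C/r$. Set $g:=\phi\,(f-f_{B(x_0,2r)})$; since $f$ is continuous and $\supp\phi$ is compact, $g\in\mathrm{Lip}(X)\cap C_c(X)$. On the open set $\{\phi=1\}$ the functions $f$ and $g$ differ by the constant $f_{B(x_0,2r)}$, so a standard computation using the Leibniz rule for $d\Gamma$ and strong locality yields $d\Gamma(f,f)=d\Gamma(g,g)$ on $\{\phi=1\}$, and in particular on $B(x_0,r)$. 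The Leibniz inequality $(\mathrm{Lip}\, g)^2\le 2\phi^2(\mathrm{Lip}\, f)^2+2(f-f_{B(x_0,2r)})^2(\mathrm{Lip}\,\phi)^2$, the bound $(\mathrm{Lip}\,\phi)^2\le C/r^2$ and Assumption~\ref{A:2PI_Lip} then give
\[
\Gamma(f,f)(B(x_0,r))\le \mathcal{E}(g,g)\le C\int(\mathrm{Lip}\, g)^2 d\mu\le C'\int_{B(x_0,2\lambda r)}(\mathrm{Lip}\, f)^2 d\mu,
\]
which is the target ball inequality with $\Lambda=2\lambda$. Since $(\mathrm{Lip}\, f)^2\in L^1_{\mathrm{loc}}(X,\mu)$, a standard Vitali-type covering argument upgrades the ball estimate to $\Gamma(f,f)\ll\mu$, and the Lebesgue differentiation theorem combined with~\eqref{eq:mass-bounds} yields, for $\mu$-almost every $x_0\in X$,
\[
\frac{d\Gamma(f,f)}{d\mu}(x_0)=\lim_{r\to 0^+}\frac{\Gamma(f,f)(B(x_0,r))}{\mu(B(x_0,r))}\le C\lim_{r\to 0^+}\fint_{B(x_0,\Lambda r)}(\mathrm{Lip}\, f)^2 d\mu=C(\mathrm{Lip}\, f(x_0))^2.
\]

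The principal delicate point is the localization of the strong-locality property: from the primitive statement ``$\mathcal{E}(u,v)=0$ whenever $u$ is constant on a neighborhood of $\supp v$'' one must extract the (classical but non-tautological) consequence that the energy measures of two functions coincide on any open set where they differ by a constant. The cleanest way is via the Leibniz formula for $d\Gamma$, testing against $\psi\in\mathcal{F}\cap C_c(X)$ with $\supp\psi\subset\{\phi=1\}$ and observing that $\psi(f-g)$ reduces to a constant multiple of $\psi$; this is also what forces one to choose the cutoff so that $\{\phi=1\}$ is a genuine open neighborhood of $\overline{B(x_0,r)}$ rather than merely contain it.
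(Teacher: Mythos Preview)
Your proof is correct but follows a genuinely different route from the paper's. The paper exploits Mosco's trigonometric identity
\[
\int_X\phi^2\, d\Gamma(f,f)=\lim_{\lambda\to\infty}\frac{1}{\lambda^2}\Big(\mathcal{E}(\phi\cos(\lambda f),\phi\cos(\lambda f))+\mathcal{E}(\phi\sin(\lambda f),\phi\sin(\lambda f))\Big),
\]
bounds each $\mathcal{E}$-term by $C_2\liminf_{r\to 0^+}E(\cdot,r)$ from Theorem~\ref{T:KS_as_Gamma}, expands the finite differences of $\phi\cos(\lambda f)$ and $\phi\sin(\lambda f)$, and lets $r\to 0$ then $\lambda\to\infty$. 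This yields the weighted inequality $\int\phi^2\,d\Gamma(f,f)\le C\int\phi^2(\mathrm{Lip}\,f)^2\,d\mu$ in one stroke, without a second appeal to the Poincar\'e inequality, without cutoffs adapted to balls, and without any measure-differentiation argument. Your approach instead proves a global ceiling, localizes via a cutoff and re-invokes Assumption~\ref{A:2PI_Lip} to absorb the cross term $(f-f_B)^2(\mathrm{Lip}\,\phi)^2$, and then passes from the ball inequality to the density bound by a Whitney/Vitali covering plus Lebesgue differentiation on the doubling space. The ingredients are more elementary (no Mosco identity needed), and the intermediate ball estimate $\Gamma(f,f)(B)\le C\int_{\Lambda B}(\mathrm{Lip}\,f)^2\,d\mu$ is a useful statement in its own right; the cost is one extra use of Poincar\'e and the (standard but nontrivial) differentiation step, which you have correctly outlined.
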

\begin{proof}
Let $f\in {\rm Lip}_{\rm loc}(X)\cap KS^{1,2}(X)$ be fixed. We will prove~\eqref{E:Gamma_vs_Lip} by showing that for any $\phi\in {\rm Lip}_{\rm loc}(X)\cap C_0(X)$
\begin{equation}\label{E:Gamma_vs_Lip_b}
    \int_X\phi^2 d\Gamma(f,f)\leq 2\int_X\phi^2({\rm Lip}f)^2d\mu.
\end{equation}
In view of~\cite[p.389]{Mos94} and applying Theorem~\ref{T:KS_as_Gamma}, 
\begin{align}
\int_X\phi^2 d\Gamma(f,f)&=\lim_{\lambda\to\infty}\frac{1}{\lambda^2}\Big(\mathcal{E}(\phi\cos(\lambda f),\phi\cos(\lambda f))+\mathcal{E}(\phi\sin(\lambda f),\phi\sin(\lambda f))\Big)\notag\\
&\leq \lim_{\lambda\to\infty}\frac{C}{\lambda^2}\liminf_{r\to 0^+}\Big(E(\phi\cos(\lambda f),r)+E(\phi\sin(\lambda f),r)\Big).\label{E:Mosco_trick_01}
\end{align}
Further, note that for any $x\in X$ and $y\in B(x,r)$, 
\begin{align*}
    &\big(\phi(x)\cos(\lambda f(x))-\phi(y)\cos(\lambda f(y))\big)^2\\
    &=\Big(\phi(x)\big(\cos (\lambda f(x))-\cos (\lambda f(y))\big)+\cos (\lambda f(y))\big(\phi(x)-\phi(y)\big)\Big)^2\\
    &\leq 2\phi(x)^2\big(\cos (\lambda f(x))-\cos (\lambda f(y))\big)^2+2\cos^2(\lambda f(y))\big(\phi(x)-\phi(y)\big)^2\\
    &\leq 2\lambda^2 \phi(x)^2\sup_{y \in B(x,r)} |f(x)-f(y)|^2+2\sup_{y \in B(x,r)} |\phi(x)-\phi(y)|^2
\end{align*}
and the same holds with $\sin$ instead of $\cos$. Combining this estimate with the definition of $E(\cdot,r)$ and Fatou lemma, it follows from~\eqref{E:Mosco_trick_01} that 
\begin{align*}
  \int_X\phi^2 d\Gamma(f,f)&\leq \lim_{\lambda\to\infty}\frac{C}{\lambda^2}\liminf_{r\to 0^+}\frac{1}{r^2}\int_X\phi^2(x)\fint_{B(x,r)} \lambda^2 \sup_{y \in B(x,r)} |f(x)-f(y)|^2d\mu(y)\,d\mu(x)\\
  &+\lim_{\lambda\to\infty}\frac{C}{\lambda^2}\liminf_{r\to 0^+}\frac{1}{r^2}\int_X\fint_{B(x,r)}\sup_{y \in B(x,r)} |\phi(x)-\phi(y)|^2d\mu(y)\,d\mu(x)\\
  &\leq C\int_X\phi^2({\rm Lip}f)^2d\mu+\lim_{\lambda\to\infty}\frac{C}{\lambda^2}\int_X({\rm Lip}\phi)^2d\mu\\
  &=C\int_X\phi^2({\rm Lip}f)^2d\mu
\end{align*}
as we wanted to prove.
\end{proof}

Strict locality will follow from the bi-Lipschitz equivalence of the distance $d$ of the underlying space $(X,d,\mu)$ and the distance $d_{\mathcal{E}}$ induced by the Dirichlet form $(\mathcal{E},KS^{1,2}(X))$ given by
\begin{equation*}
    d_{\mathcal{E}}(x,y)=\sup\{f(x)-f(y)\colon f\in KS^{1,2}(X)\cap C_0(X)\text{ with }d\Gamma(f,f)\leq d\mu\}.
\end{equation*}
The proof makes use of the $(1,2)$-Poincar\'e inequality that readily follows from the $(2,2)$-Poincar\'e inequality from Theorem~\ref{T:2PI_Gamma}.
\begin{theorem}\label{T:bilipschitz_dist}
The distance induced by the Dirichlet form $(\mathcal{E},KS^{1,2}(X))$ is bi-Lipschitz equivalent to the distance $d$ of the underlying space. In  particular, the Dirichlet form $(\mathcal{E},KS^{1,2}(X))$ is strictly local.
\end{theorem}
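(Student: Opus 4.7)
The plan is to establish the bi-Lipschitz equivalence $d \simeq d_{\mathcal{E}}$ in two separate inequalities, from which the strict locality assertion will follow immediately since strong locality is already known from Theorem~\ref{T:KS_as_Gamma} and a bi-Lipschitz equivalence automatically makes $d_{\mathcal{E}}$ a metric generating the same topology as $d$.

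First, I would prove the bound $d_{\mathcal{E}}(x,y) \le C\,d(x,y)$. Fix any admissible $f \in KS^{1,2}(X)\cap C_0(X)$ with $d\Gamma(f,f)\le d\mu$. Theorem~\ref{T:2PI_Gamma} combined with volume doubling yields
\[
\fint_{B(z,R)}|f-f_{B(z,R)}|^2\,d\mu \le CR^{2}
\]
for every $z\in X$ and $R>0$. A standard chaining argument along the dyadic sequence of balls $B_k^x:=B(x,2^{-k}d(x,y))$ and $B_k^y:=B(y,2^{-k}d(x,y))$ then gives, using Cauchy--Schwarz and doubling to control $|f_{B_{k+1}}-f_{B_k}|$ by $C\cdot 2^{-k}d(x,y)$, that
\[
|f(x)-f_{B_0^x}|+|f(y)-f_{B_0^y}|\le C\,d(x,y);
\]
here pointwise values are recovered from averages because $f$ is continuous. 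Comparing $f_{B_0^x}$ and $f_{B_0^y}$ through the common enveloping ball $B(x,2d(x,y))$ yields $|f(x)-f(y)|\le C\,d(x,y)$, and taking the supremum over admissible $f$ gives the desired upper bound.

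Next I would prove the reverse inequality $d_{\mathcal{E}}(x,y)\ge c\,d(x,y)$ by exhibiting an explicit test function. Using that $(X,d)$ is complete and doubling, closed balls are compact, so for $R>d(x,y)$ the function
\[
f(z):=(R-d(x,z))_{+}
\]
is $1$-Lipschitz, continuous, and compactly supported, hence belongs to $KS^{1,2}(X)\cap C_0(X)$ (finiteness of the Korevaar--Schoen energy on bounded Lipschitz functions is given by~\eqref{sup kor so}). By Theorem~\ref{T:Gamma_vs_Lip}, $d\Gamma(f,f)\le C(\mathrm{Lip}\,f)^2 d\mu\le C\,d\mu$. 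Rescaling $g:=f/\sqrt{C}$ yields an admissible competitor in the definition of $d_{\mathcal{E}}$ with $g(x)-g(y)=d(x,y)/\sqrt{C}$, so $d_{\mathcal{E}}(x,y)\ge d(x,y)/\sqrt{C}$.

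Combining the two inequalities shows $d\simeq d_{\mathcal{E}}$. In particular $d_{\mathcal{E}}$ is a genuine metric on $X$ and the two topologies coincide, which together with the strong locality established in Theorem~\ref{T:KS_as_Gamma} gives strict locality of $(\mathcal{E},KS^{1,2}(X))$. The main technical obstacle I anticipate is verifying cleanly the chaining estimate in the first step: one must make sure the Lebesgue-type identification of $f$ with the limit of its ball averages is justified purely from continuity of $f\in C_0(X)$ (not from a Lebesgue differentiation theorem for the energy measure), and that the telescoping through balls of comparable radius around $x$ and $y$ uses only doubling and the $(2,2)$-Poincar\'e inequality of Theorem~\ref{T:2PI_Gamma}, not any quasi-convexity assumption that might implicitly already use the conclusion.
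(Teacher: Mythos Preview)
Your proposal is correct and follows essentially the same route as the paper: the upper bound $d_{\mathcal{E}}\le C\,d$ comes from the Poincar\'e inequality of Theorem~\ref{T:2PI_Gamma} via a telescopic argument, and the lower bound $d_{\mathcal{E}}\ge c\,d$ comes from Theorem~\ref{T:Gamma_vs_Lip} by testing on Lipschitz functions. The only cosmetic difference is that the paper packages the telescoping step through the Haj{\l}asz--Koskela pointwise maximal inequality $|f(x)-f(y)|\le C\,d(x,y)\,(g_f(x)+g_f(y))$ (citing \cite[Lemma~5.15]{HK98}) and then observes $g_f\le 1$ when $d\Gamma(f,f)\le d\mu$, whereas you carry out the dyadic chain explicitly and use continuity of $f$ to pass from averages to point values; your version is slightly more self-contained and sidesteps the a.e.\ caveat in the maximal-function formulation.
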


\begin{proof}
By applying the telescopic argument of~\cite[Lemma 5.15]{HK98}, the Poincar\'e inequality in Theorem \ref{T:2PI_Gamma} implies that for $f\in KS^{1,2}(X)$, and $\mu$-a.e. $x,y \in X$,
\[
|f(x)-f(y)| \le C d(x,y) (g_f(x) +g_f(y)),
\]
where $g_f$ is the maximal function defined as
\[
g_f(x)=\sup_{r>0} \frac{1}{\mu(B(x,r))^{1/2}} \left(\int_{B(x,r)} d\Gamma(f,f)\right)^{1/2}.
\]
We deduce that for $f\in KS^{1,2}(X)\cap C_0(X)$ with   $d\Gamma(f,f)\leq d\mu$ it holds that $|f(x)-f(y)| \le C d(x,y)$ and therefore 
\begin{align*}
d_{\mathcal{E}}(x,y)&=\sup\{f(x)-f(y)\colon f\in KS^{1,2}(X)\cap C_0(X)\text{ with }d\Gamma(f,f)\leq d\mu\} 
 \le C d(x,y).
\end{align*}
On the other hand, by virtue of Theorem~\ref{T:Gamma_vs_Lip}, for any $x,y\in X$ we have
\begin{align*}
    d_{\mathcal{E}}(x,y)&=\sup\{f(x)-f(y)\colon f\in KS^{1,2}(X)\cap C_0(X)\text{ with }d\Gamma(f,f)\leq d\mu\}\\
    &\ge \sup\{f(x)-f(y)\colon f\in {\rm Lip}_{\rm loc}(X)\cap C_0(X) \cap  KS^{1,2}(X), C{\rm Lip}f \le 1 \}\\
    &\ge \frac{1}{C} d(x,y).
\end{align*}
\end{proof}

\section{Strongly local Dirichlet forms on fractal-like spaces as Mosco limits of Korevaar-Schoen energies }\label{S:non-strictly}

The goal of this section is to study Mosco limits of the Korevaar-Schoen energies in fractal-like spaces for which the 2-Poincar\'e inequality \eqref{A:2PI_Lip} is not available. Throughout the section $(X,d,\mu)$ is a metric measure space that satisfies the assumptions of Section~\ref{MMSintro} and is in addition compact. We note that the compactness of $X$ and doubling implies that for every $x \in X$, $0\le r \le R=\rm {diam} (X)$,
\begin{equation}\label{E:lower_mass_bound}
\mu(B(x,r)) \ge c r^Q,
\end{equation}
where $0<Q<\infty$ is the exponent in~\eqref{eq:mass-bounds}.

\subsection{Fractal-like spaces}\label{SS:Fractal_sps}
Let $d_w>2$ and consider the Korevaar-Schoen space 
\[
KS^{d_w/2,2}(X)=\left\{ f \in L^2(X,\mu), \, \limsup_{r \to 0^+} E_{d_w/2,X}(f,r) <+\infty \right\},
\] 
where
\[
E_{d_w/2,U}(f,r):= \int_U\frac{1}{\mu(B(x,r))} \int_{B(x,r)} \frac{|f(y)-f(x)|^2}{r^{d_w}} d\mu(y) d\mu(x)
\]
for any Borel set $U\subseteq X$. To shorten the notation and when no confusion may occur, $E_{d_w/2,X}(f,r)$ will be simply denoted as $E_{d_w/2}(f,r)$. At this level of generality, $KS^{d_w/2,2}(X)$ might only contain a.e. constant functions, but this will not affect the results of the next subsection \ref{S:Rellich-Kondrachov}. Later, we will need  the  further  assumption \ref{A:CCC} to ensure that $KS^{d_w/2,2}(X)$ contains enough functions.

The main assumption we work with in this section is the following:
%

\begin{assump}[$2$-Poincar\'e inequality with Korevaar-Schoen energies]\label{A:2PI_KS_dw}
For any $f\in KS^{d_w/2,2}(X)$,
\begin{equation}\label{E:2PI_KS_dw}
\int_{B(x,R)} |f(y)-f_{B(x,R)}|^2 d\mu (y) \le C R^{d_w} \liminf_{r\to 0^+} E_{d_w/2,B(x,\lambda R)}(f,r).
\end{equation}
The constants $C>0$ and $\lambda \ge 1$ in~\eqref{E:2PI_KS_dw} are independent from $x$, $R$ and $f$.
\end{assump} 

\begin{remark}
From Theorem \ref{converse}, a large class of examples of metric measure spaces satisfying \ref{A:2PI_KS_dw} are given by nested fractals or some infinitely ramified fractals like the Sierpinski carpet.
\end{remark}

\subsection{Preliminary: A Rellich-Kondrachov type theorem in Korevaar-Schoen spaces}\label{S:Rellich-Kondrachov}
%

We first prove an analogue of the Rellich-Kondrachov theorem \cite[Theorem 8.1]{HK00} in the context of fractal-like spaces. We start with the following lemma.

\begin{lem}\label{L:4KS_cond_dw}
There exists $C>0$ such that for every $f \in KS^{d_w/2,2}(X)$
\begin{equation}\label{E:sup_vs_limif_dw}
\sup_{r >0}E_{d_w/2,X}(f,r) \le C \liminf_{r\to 0^+}E_{d_w/2,X}(f,r).
\end{equation}
\end{lem}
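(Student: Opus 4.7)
The plan is to establish the stronger pointwise-in-$r$ bound
\[
E_{d_w/2,X}(f,r) \le C \liminf_{s \to 0^+} E_{d_w/2,X}(f,s) \qquad \text{for every } r > 0,
\]
with $C$ independent of $r$ and $f$; taking the supremum over $r$ then yields~\eqref{E:sup_vs_limif_dw}. The idea is to localize the integral defining $E_{d_w/2,X}(f,r)$ on a good cover of $X$ at scale $r$, estimate each local piece by the $L^2$-oscillation of $f$ on a slightly enlarged ball, invoke Assumption~\ref{A:2PI_KS_dw} to control that oscillation by a small-scale liminf of $E_{d_w/2}$, and reassemble via bounded overlap.

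Concretely, for fixed $r > 0$ I would use \eqref{A:VD} to select a maximal $r$-separated net $\{x_i\}_i$ so that $\{B_i := B(x_i,r)\}_i$ covers $X$ and both $\{2B_i\}_i$ and $\{B(x_i,2\lambda r)\}_i$ have bounded overlap with multiplicity $N$ depending only on the doubling constant and on the constant $\lambda$ from Assumption~\ref{A:2PI_KS_dw}. For $x \in B_i$ and $y \in B(x,r)$ one has $x,y \in 2B_i$, and doubling yields $\mu(B(x,r)) \simeq \mu(2B_i)$. Combining $|f(x)-f(y)|^2 \le 2|f(x)-f_{2B_i}|^2 + 2|f(y)-f_{2B_i}|^2$ with Fubini and Assumption~\ref{A:2PI_KS_dw} applied to the ball $2B_i$ of radius $2r$ gives
\[
\int_{B_i} \fint_{B(x,r)} |f(x)-f(y)|^2 \, d\mu(y)\, d\mu(x) \le C \int_{2B_i} |f - f_{2B_i}|^2 \, d\mu \le C\, r^{d_w} \liminf_{s \to 0^+} E_{d_w/2,\,B(x_i,2\lambda r)}(f,s).
\]

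Summing over $i$ using that $\{B_i\}_i$ covers $X$, the Fatou-type inequality $\sum_i \liminf \le \liminf \sum_i$ for non-negative terms, and the bounded-overlap identity $\sum_i E_{d_w/2,\,B(x_i,2\lambda r)}(f,s) \le N\, E_{d_w/2,X}(f,s)$, one obtains
\[
r^{d_w} E_{d_w/2,X}(f,r) \le \sum_i \int_{B_i} \fint_{B(x,r)} |f(x)-f(y)|^2 \, d\mu(y)\, d\mu(x) \le C\, r^{d_w} \liminf_{s \to 0^+} E_{d_w/2,X}(f,s),
\]
and dividing by $r^{d_w}$ finishes the argument. The main subtle point is checking that the overlap multiplicity, the doubling factor $\mu(B(x,r))/\mu(2B_i)$, and the Poincar\'e constant are all uniform in $r$; the first two follow from~\eqref{A:VD}, and the third is built into Assumption~\ref{A:2PI_KS_dw}, so the constant $C$ in the final bound genuinely depends only on the data of the space.
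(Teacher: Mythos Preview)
Your proof is correct and follows essentially the same approach as the paper's: cover $X$ at scale $r$, insert a ball average via the triangle inequality, apply the Poincar\'e inequality from Assumption~\ref{A:2PI_KS_dw} on each piece, and reassemble using bounded overlap and superadditivity of $\liminf$. Your choice of $f_{2B_i}$ as the reference constant (rather than the paper's $f_{B_i}$) is a mild streamlining, since it places both $x$ and $y$ in the same ball and so avoids the paper's extra step of comparing $f_{B(x_i,r)}$ with $f_{B(x_i,2r)}$.
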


\begin{proof}
The proof is a minor modification of that in~\cite[Theorem 3.1]{MMS16}, see also \cite[Theorem 5.2]{Bau22}; the details are included here for completeness. 
Fix $r>0$ and consider a $r$-covering of $X$ that consists of balls $\{B(x_i,r)\}_{i}$ with the property that  $\{B(x_i,2\lambda r)\}_{i}$ have the bounded overlap property, see e.g.~\cite[pp.102-103]{HKST15}. In particular, there exists $C>0$ independent of $r$ such that
\[
\sum_{i}\mathbf{1}_{B(x_i,2\lambda r)}(x)<C
\]
for all $x\in X$. In addition, for any $x\in B(x_i,r)$ and $y\in B(x,r)$ the doubling property implies
\begin{align*}
&\mu(B(x_i,r))\leq \mu(B(y,4 r))\leq C \mu(B(y,r))\\
&\mu(B(x_i,r))\leq\mu(B(x,2r))\leq C \mu(B(x,r)).   
\end{align*}
For any $f \in KS^{d_w/2,2}(X)$ it thus holds that
\begin{align}
 & \frac{1}{r^{d_w}}\int_X\int_{B(x,r)}\frac{|f(x)-f(y)|^2}{\mu(B(x,r))}d\mu(y)\,d\mu(x) \notag\\
\leq &\frac{1}{r^{d_w}}\sum_{i}\int_{B(x_i,r)}\int_{B(x,r)}\frac{|f(x)-f(y)|^2}{\mu(B(x,r))}d\mu(y)\,d\mu(x)\notag\\
\leq &\frac{2}{r^{d_w}}\sum_{i}\int_{B(x_i,r)}\int_{B(x,r)}\frac{|f(x)-f_{B(x_i,r)}|^2}{\mu(B(x,r))} + \frac{|f(y)-f_{B(x_i,r)}|^2}{\mu(B(x,r))}d\mu(y)\,d\mu(x)\label{E:4KS_cond_dw_help}
\end{align}
Applying the $2$-Poincar\'e inequality~\eqref{E:2PI_KS_dw} to the first term yields
\begin{align*}
 &\sum_{i}\int_{B(x_i,r)}\fint_{B(x,r)}|f(x)-f_{B(x_i,r)}|^2 d\mu(y) \,d\mu(x) \\
 =&\sum_{i}\int_{B(x_i,r)} |f(x)-f_{B(x_i,r)}|^2 d\mu(x) \\
 \le &C r^{d_w} \sum_{i} \liminf_{\varepsilon \to 0} E_{d_w/2,B(x_i,\lambda r)}(f,\varepsilon) \le C r^{d_w} \liminf_{\varepsilon \to 0} E_{d_w/2,X}(f,\varepsilon).
\end{align*}
To bound second term, using first Fubini theorem and afterwards the doubling property we get
\begin{align*}
 & \sum_{i}\int_{B(x_i,r)}\fint_{B(x,r)} |f(y)-f_{B(x_i,r)}|^2d\mu(y)\,d\mu(x) \\
 \le & \sum_{i}\int_{B(x_i,2r)}\int_{B(y,r)} \frac{|f(y)-f_{B(x_i,r)}|^2}{\mu(B(x,r))}d\mu(x)\,d\mu(y) \\
 \le &C \sum_{i}\int_{B(x_i,2r)} |f(y)-f_{B(x_i,r)}|^2 d\mu(y). 
\end{align*}
Further, the $2$-Poincar\'e inequality implies
\begin{align*}
 & \int_{B(x_i,2r)} |f(y)-f_{B(x_i,r)}|^2 d\mu(y) \\
 \le & 2 \left(  \int_{B(x_i,2r)} |f(y)-f_{B(x_i,2r)}|^2 d\mu(y)+ \mu( B(x_i,2r))  |f_{B(x_i,2r)}-f_{B(x_i,r)}|^2 \right) \\
 \le & C  \left(  r^{d_w} \liminf_{\varepsilon \to 0} E_{d_w/2,B(x_i,2\lambda r)}(f,\varepsilon) + \mu( B(x_i,2r))  |f_{B(x_i,2r)}-f_{B(x_i,r)}|^2 \right).
\end{align*}
Finally, from H\"older's inequality and the $2$-Poincar\'e inequality again it follows that
\begin{align*}
 & \mu( B(x_i,2r))  |f_{B(x_i,2r)}-f_{B(x_i,r)}|^2 \\
 \le & C \int_{B(x_i,r)} | f(y) -f_{B(x_i,2r)}|^2 d\mu(y) \\
  \le & C \int_{B(x_i,2r)} | f(y) -f_{B(x_i,2r)}|^2 d\mu(y) \\
  \le &C r^{d_w}  \liminf_{\varepsilon \to 0 } E_{d_w/2,B(x_i,2\lambda r)}(f,\varepsilon).
\end{align*}
Combining all previous estimates with~\eqref{E:4KS_cond_dw_help} we obtain, for every $r>0$,
\begin{align*}\label{sup kor so}
\frac{1}{r^{d_w}}\int_X\int_{B(x,r)}\frac{|f(x)-f(y)|^p}{\mu(B(x,r))}d\mu(y)\,d\mu(x) \le C \liminf_{\varepsilon \to 0 } E_{d_w/2,X}(f,\varepsilon)
\end{align*}
as we wanted to prove.
\end{proof}

We now record a maximal theorem that is implicitly proved in~\cite[Section 5.2]{BC22} and include the proof for completeness. For any $f\in KS^{d_w/2,2}(X)$ and $R>0$ define the maximal function
\begin{equation}\label{E:def_max_fct_E}
    M_Rf(x):=\sup_{0<\rho<R}\bigg(\frac{1}{\mu(B(x,\rho))}\liminf_{r\to 0^+}E_{d_w/2,B(x,\rho)}(f,r)\bigg)^{1/2}.
\end{equation}
\begin{theorem}\label{T:loc_maximal_thm}
For any $R>0$ and $f\in KS^{d_w/2,2}(X)$ the maximal function $M_Rf$ is weak-$L^2$ bounded, that is for every $\lambda >0$
\[
\mu(\{x\in X\colon |M_Rf(x)|>\lambda\})\leq C\lambda^{-2}\liminf_{r\to 0^+}E_{d_w/2,B(x,R)}(f,r)
\]
for some $C>0$ independent of $R$ and $f$.
\end{theorem}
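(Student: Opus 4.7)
The plan is to run a standard weak-type argument based on a Vitali-type covering, adapted to the Korevaar–Schoen ``gradient'' $\liminf_{r\to 0^+} E_{d_w/2,U}(f,r)$. First I would rewrite the super-level set $A_\lambda = \{x\in X : M_R f(x) > \lambda\}$ in a covering-friendly form: for each $x \in A_\lambda$, by definition of $M_R f$, there exists $\rho_x \in (0,R)$ with
\[
\liminf_{r\to 0^+} E_{d_w/2,B(x,\rho_x)}(f,r) > \lambda^2\, \mu(B(x,\rho_x)).
\]
The task then reduces to estimating $\mu(A_\lambda)$ using the covering $\{B(x,\rho_x)\}_{x\in A_\lambda}$.

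Second, because the radii $\rho_x$ are uniformly bounded by $R<\infty$ and $\mu$ is doubling, the standard $5r$-covering lemma (see \cite[pp.~102--104]{HKST15}) produces a countable, pairwise disjoint subfamily $\{B(x_i,\rho_{x_i})\}_i$ with $A_\lambda \subseteq \bigcup_i B(x_i,5\rho_{x_i})$. Applying~\eqref{A:VD} a fixed number of times and the defining inequality for each $\rho_{x_i}$ gives
\[
\mu(A_\lambda) \le \sum_i \mu(B(x_i,5\rho_{x_i})) \le C \sum_i \mu(B(x_i,\rho_{x_i})) \le \frac{C}{\lambda^2}\sum_i \liminf_{r\to 0^+} E_{d_w/2,B(x_i,\rho_{x_i})}(f,r).
\]

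Third, one must interchange the infinite sum with the $\liminf$. For any fixed $r>0$, since the outer variable of $E_{d_w/2,U}(f,r)$ is integrated over $U$ and the family $\{B(x_i,\rho_{x_i})\}_i$ is disjoint, one gets the pointwise subadditivity
\[
\sum_i E_{d_w/2,B(x_i,\rho_{x_i})}(f,r) \le E_{d_w/2,X}(f,r).
\]
Combining this with Fatou's lemma for sums of nonnegative terms yields
\[
\sum_i \liminf_{r\to 0^+} E_{d_w/2,B(x_i,\rho_{x_i})}(f,r) \le \liminf_{r\to 0^+} \sum_i E_{d_w/2,B(x_i,\rho_{x_i})}(f,r) \le \liminf_{r\to 0^+} E_{d_w/2,X}(f,r),
\]
which together with the covering bound completes the argument.

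The main technical issue I anticipate is this last interchange: one needs both (a) the disjointness of the selected balls, so that $\sum_i E_{d_w/2,B(x_i,\rho_{x_i})}(f,r)$ is controlled uniformly in the family size by a single integrated energy, and (b) nonnegativity to invoke Fatou for sums. A secondary, more bookkeeping-level concern is the localization implicit in the statement: if one wishes to replace $E_{d_w/2,X}(f,r)$ on the right hand side by $E_{d_w/2,B(x_0,R')}(f,r)$ for some fixed enlargement of a reference ball (as the notation $E_{d_w/2,B(x,R)}$ in the statement suggests), one must additionally verify that every selected ball $B(x_i,\rho_{x_i})$ stays inside that enlargement, which follows from $\rho_{x_i}<R$ together with the triangle inequality.
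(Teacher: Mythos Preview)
Your proposal is correct and follows essentially the same argument as the paper: select a ball for each point in the super-level set, apply the $5r$-Vitali covering lemma to extract a disjoint subfamily, use doubling, and then sum. In fact you are slightly more careful than the paper, which writes the final inequality $\sum_i \liminf_{r\to 0^+} E_{d_w/2,B(x_i,\rho_i)}(f,r) \le \liminf_{r\to 0^+} E_{d_w/2,X}(f,r)$ without spelling out the Fatou-plus-disjointness justification you give; your remark about the localized right-hand side is also apt, since the paper's proof actually ends with $E_{d_w/2,X}(f,r)$ rather than $E_{d_w/2,B(x,R)}(f,r)$.
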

\begin{proof}
    For each $\lambda > 0$, let $E_\lambda:=\{x\in X\colon |M_Rf(x)|>\lambda\}$. By definition of $M_Rf$, for any $x\in E_\lambda$ there exists $0<\rho_x<R$ such that
    \[
   \lambda^{-2}\liminf_{r\to 0^+}E_{d_w/2,B(x,\rho_x)}(f,r)>\mu(B(x,\rho_x)).
    \]
    The family $\{B(x,\rho_x)\}_{x\in E_\lambda}$ is a covering of $E_\lambda$ by balls of diameter at most $2R$. By virtue of Vitali's covering theorem, or see e.g.~\cite[Theorem 14.12]{HK00}, one can extract a pairwise disjoint subcollection $\{B(x_i,\rho_i)\}_{i\geq 0}$ with the property that $E_\lambda\subset \bigcup_{i\geq 0}B(x_i,5\rho_i)$. Applying volume doubling finally yields
    \begin{align*}
        \mu(E_\lambda)&\leq\sum_{i\geq 0}\mu(B(x_i,5\rho_i))\leq C\sum_{i\geq 0}\mu(B(x_i,\rho_i))\\
        &\leq C\lambda^{-2}\sum_{i\geq 0}\liminf_{r\to 0^+}E_{d_w/2,B(x_i,\rho_i)}(f,r)\leq C\lambda^{-2} \liminf_{r\to 0^+}E_{d_w/2,X}(f,r)
    \end{align*}
    as we wanted to prove.
\end{proof}
One of the ingredients in the proof of the Rellich-Kondrachov theorem is a suitable Sobolev embedding from which depends on the relationship between $d_w$ and $Q$. We start with the most restrictive case $Q>d_w$.

\begin{theorem}\label{T:RK_Qgeqdw}
Assume $Q>d_w$. Any sequence $\{f_n \}_{n\geq 1}\subset KS^{d_w/2,2}(X)$ such that 
\begin{equation}\label{E:RK_cond_Qgeqdw}
\sup_{n \ge 1} ( \| f_n \|_{L^2(X,\mu)}+ \liminf_{r\to 0^+}E_{d_w/2,X}(f_n,r))<+\infty
\end{equation}
contains a subsequence that converges in $L^\alpha(X,\mu)$ for any $1 \le \alpha < \frac{2Q}{Q-d_w}$.
\end{theorem}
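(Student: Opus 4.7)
The plan is to establish, in order, a weak Sobolev embedding, $L^2$-precompactness, and then $L^\alpha$-convergence via interpolation. Concretely, I would show:
(a) $KS^{d_w/2,2}(X) \hookrightarrow L^{p^*,\infty}(X,\mu)$ with $p^* = 2Q/(Q-d_w)$;
(b) any subsequence of $\{f_n\}$ satisfying~\eqref{E:RK_cond_Qgeqdw} admits an $L^2$-convergent further subsequence; and
(c) $L^2$-convergence upgrades to $L^\alpha$-convergence for $\alpha < p^*$.

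For (a), set $R_0 = \mathrm{diam}(X)$ and $R_k = R_0 2^{-k}$. Jensen's inequality, doubling, and Assumption~\ref{A:2PI_KS_dw} give for $\mu$-a.e.\ $x$
\begin{equation*}
|f_{B(x,R_{k+1})} - f_{B(x,R_k)}|^2 \leq \frac{C R_k^{d_w}}{\mu(B(x,R_k))}\, \liminf_{r\to 0^+} E_{d_w/2, B(x, \lambda R_k)}(f, r).
\end{equation*}
Telescoping and splitting at a threshold scale $\rho \in (0, R_0)$, the small-scale sum ($R_k \leq \rho$) is bounded by $C \rho^{d_w/2} M_{\lambda R_0}f(x)$ in view of~\eqref{E:def_max_fct_E}; the large-scale sum ($R_k > \rho$) is bounded, using $\liminf_{r\to 0^+}E_{d_w/2,B(x,\lambda R_k)}(f,r) \leq E := \liminf_{r\to 0^+}E_{d_w/2,X}(f,r)$ together with the lower mass bound~\eqref{E:lower_mass_bound}, by $C E^{1/2} \rho^{(d_w-Q)/2}$. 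Optimizing over $\rho$ produces the pointwise estimate
\begin{equation*}
|f(x) - f_{B(x,R_0)}| \leq C\, E^{d_w/(2Q)}\, M_{\lambda R_0}f(x)^{(Q-d_w)/Q}.
\end{equation*}
The weak-$L^2$ bound on $M_{\lambda R_0}f$ from Theorem~\ref{T:loc_maximal_thm} then yields $\|f - f_{B(x,R_0)}\|_{L^{p^*,\infty}(X,\mu)} \leq C E^{1/2}$, and (a) follows because $|f_{B(x,R_0)}| \leq C\|f\|_{L^2}$ by Cauchy--Schwarz.

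For (b), define $f_{n,\varepsilon} := \sum_i f_{n, B_i^\varepsilon}\, \varphi_i^\varepsilon$ as in~\eqref{E:def_f_eps}. An argument analogous to that leading to~\eqref{E:L2_Lip_est_02} gives
\begin{equation*}
\|f_n - f_{n,\varepsilon}\|_{L^2(X,\mu)}^2 \leq C\, \varepsilon^{d_w} \sup_{r>0} E_{d_w/2,X}(f_n, r),
\end{equation*}
which is $\leq C \varepsilon^{d_w}$ uniformly in $n$ by Lemma~\ref{L:4KS_cond_dw} and~\eqref{E:RK_cond_Qgeqdw}. For each fixed $\varepsilon > 0$, compactness of $X$ ensures the $\varepsilon$-cover consists of finitely many balls, and~\eqref{E:lower_mass_bound} gives $|f_{n,B_i^\varepsilon}| \leq C \varepsilon^{-Q/2}$ uniformly in $n$; hence $\{f_{n,\varepsilon}\}_n$ lies in a fixed finite-dimensional subspace with uniformly bounded coordinates and is precompact in $L^2$. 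A diagonal extraction along $\varepsilon_k \to 0^+$, combined with a standard $3\delta$-argument using the uniform approximation, produces an $L^2$-Cauchy subsequence $\{f_{n_k}\}$ converging to some $f \in L^2(X,\mu)$.

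For (c), the case $\alpha \in [1,2]$ is immediate from $\mu(X) < \infty$. For $\alpha \in (2, p^*)$, (a) provides $\sup_k \|f_{n_k} - f\|_{L^{p^*,\infty}(X,\mu)} < \infty$, and the layer-cake formula with an optimally chosen cutoff yields the standard real-interpolation estimate
\begin{equation*}
\|f_{n_k} - f\|_{L^\alpha}^\alpha \leq C\, \|f_{n_k} - f\|_{L^2}^{2(p^* - \alpha)/(p^* - 2)},
\end{equation*}
so $L^\alpha$-convergence follows from $L^2$-convergence. I expect the main obstacle to be step (a): carefully splitting the dyadic telescoping at the correct threshold is what converts the fractional $(2,2)$-Poincar\'e inequality (at scale $d_w > 2$) into a weak Sobolev embedding with the sharp exponent $p^* = 2Q/(Q-d_w)$, and the lower mass bound~\eqref{E:lower_mass_bound} (which relies on compactness) is essential for controlling the large-scale contribution.
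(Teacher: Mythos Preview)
Your proof is correct but follows a genuinely different route from the paper's. The paper invokes a strong Sobolev embedding $KS^{d_w/2,2}(X)\hookrightarrow L^{p^*}(X,\mu)$ from \cite[Theorem~4.3]{Bau22} as a black box, extracts a weakly $L^{p^*}$-convergent subsequence, and then proves convergence in measure via the same telescoping/maximal-function bound you use in (a) (but applied to $|f_k(x)-f_{k,B(x,\rho)}|$ at the compactness stage rather than the embedding stage); strong $L^\alpha$-convergence then follows from the standard argument ``bounded in $L^{p^*}$ $+$ convergent in measure $\Rightarrow$ convergent in $L^\alpha$ for $\alpha<p^*$'' as in \cite[Lemma~8.2]{HK00}. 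By contrast, you (a) prove the weak-$L^{p^*}$ embedding yourself via the threshold-split telescope (so the maximal theorem enters here rather than later), (b) obtain $L^2$-precompactness by the elementary and self-contained finite-dimensional approximation $f_{n,\varepsilon}$---this step is different from the paper and does not use the maximal theorem at all---and (c) upgrade via real interpolation between $L^2$ and $L^{p^*,\infty}$. Your approach is more self-contained (no external citation for the Sobolev embedding) and your step (b) is arguably cleaner than the convergence-in-measure argument; the paper's route has the advantage of following the classical Rellich--Kondrachov template of \cite{HK00} verbatim and of yielding the strong $L^{p^*}$ embedding along the way. Two minor remarks: in (c) your displayed interpolation inequality should also carry a factor depending on $\sup_k\|f_{n_k}-f\|_{L^{p^*,\infty}}$, but this is harmless since that quantity is bounded; and in (b) you should note explicitly that the Lipschitz partition of unity $\{\varphi_i^\varepsilon\}$ from~\eqref{E:def_f_eps} exists on any doubling space without invoking Assumption~\ref{A:CCC}, since the latter is only introduced after the present theorem.
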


\begin{remark}\label{R:RK_Qgeqdw}
Note that $\frac{2Q}{Q-d_w}>2$ since $d_w>0$. Convergence in $L^2$ will be of relevance later to prove Mosco convergence, c.f. Corollary~\ref{C:Mosco_limit_dw}.
\end{remark}
\begin{proof}
Conditions~\eqref{E:sup_vs_limif_dw} and~\eqref{E:lower_mass_bound} allow to apply~\cite[Theorem 4.3]{Bau22} and obtain the Sobolev embedding
\begin{equation}\label{E:Sobolev_embedding_dw}
    \|f_n\|_{L^q(X,\mu)}\leq C\Big(\|f_n\|_{L^2(X,\mu)}+ \liminf_{r\to 0^+}E_{d_w/2,X}(f_n,r)^{1/2}\Big)
\end{equation}
for $q=\frac{2Q}{Q-d_w}$. In view of assumption~\eqref{E:RK_cond_Qgeqdw}, the sequence $\{f_n \}_{n\geq 1}$ is bounded in $L^q(X,\mu)$ and has therefore a subsequence $\{f_{n_k}\}_{k\geq 1}$ that weakly converges in $L^q$ to some $f\in L^q(X,\mu)$. To prove that this subsequence also strongly converges in $L^\alpha(X,\mu)$ for any $1 \le \alpha < \frac{2Q}{Q-d_w}$ we follow~\cite[Theorem 8.1]{HK00} and show that $\{f_{n_k}\}_{k\geq 1}$ converges to $f$ in measure, c.f.~\cite[Lemma 8.2]{HK00}. To do so requires the maximal theorem from Theorem~\ref{T:loc_maximal_thm}; we include the details for completeness.\\
Fix $\varepsilon>0$. For any $k\geq 1$ and $\rho>0$,
\begin{align*}
    \mu(\{x\in X\colon|f(x)-f_k(x)|>\varepsilon\})&\leq \mu(\{x\in X\colon|f(x)-f_{B(x,\rho)}|>\varepsilon/3\})\\
    &+\mu(\{x\in X\colon|f_{B(x,\rho)}-f_{k,B(x,\rho)}|>\varepsilon/3\})\\
    &+\mu(\{x\in X\colon|f_{k,B(x,\rho)}-f_k(x)|>\varepsilon/3\})\\
    &=:A_{\rho,\varepsilon}+B_{\rho,k,\varepsilon}+C_{\rho,k,\varepsilon},
\end{align*}
where $f_{k,B(x,\rho)}=\fint_{B(x,\rho)}f_kd\mu$. By virtue of Lebesgue differentiation theorem, see e.g.~\cite[Theorem 14.15]{HK00}, $A_{\rho,\varepsilon}\to 0$ as $\rho\to 0$. Moreover, $B_{\rho,k,\varepsilon}\to 0$ as $k\to\infty$ due to the fact that $\{f_{n_k}\}_{k\geq 1}$ converges weakly in $L^q(X,\mu)$. To analyze $C_{\rho,k,\varepsilon}$, a telescopic argument, volume doubling and the 2-Poincar\'e inequality~\eqref{E:2PI_KS_dw} yield
\begin{align*}
    |f_{k,B(x,\rho)}-f_k(x)|&\leq \sum_{m=0}^\infty|f_{k,B(x,2^{-m}\rho)}-f_{k,B(x,2^{-m-1}\rho)}|\\
    &\leq \sum_{m=0}^\infty\fint_{B(x,2^{-m-1}\rho)}|f_k-f_{k,B(x,2^{-m}\rho)}|d\mu\\
    &\leq \sum_{m=0}^\infty\frac{1}{\mu(B(x,2^{-m-1}\rho))}\int_{B(x,2^{-m}\rho)}|f_k-f_{k,B(x,2^{-m}\rho)}|d\mu\\
    &\leq C\sum_{m=0}^\infty(2^{-m}\rho)^{\frac{d_w}{2}}\Big(\frac{1}{\mu(B(x,2^{-m-1}\rho))}\liminf_{r\to 0^+}E_{d_w/2,B(x,\lambda2^{-m}\rho)}(f_k,r)\Big)^{1/2}\\
    &\leq C\rho^{\frac{d_w}{2}}\sum_{m=0}^\infty 2^{-\frac{md_w}{2}}\Big(\frac{1}{\mu(B(x,\lambda2^{-m}\rho))}\liminf_{r\to 0^+}E_{d_w/2,B(x,\lambda2^{-m}\rho)}(f_k,r)\Big)^{1/2}\\
    &\leq C\rho^{\frac{d_w}{2}}(M_{\lambda\rho}f_k(x))^{1/2},
\end{align*}
where
\[
M_{\lambda\rho}f_k(x):=\sup_{0<R<\lambda\rho}\frac{1}{\mu(B(x,R))}\liminf_{r\to 0^+}E_{d_w/2,B(x,R)}(f_k,r).
\]
Thus, by virtue of Theorem~\ref{T:loc_maximal_thm},
\begin{align*}
   C_{\rho,k,\varepsilon}&\leq \mu(\{x\in X\colon |M_{\lambda\rho}f_k(x)|>C\varepsilon^2\rho^{-d_w} \})\\
   &\leq C\frac{\rho^{d_w}}{\varepsilon^2}\liminf_{r\to 0^+}E_{d_w/2,X}(f_k,r)
\end{align*}
which tends to zero as $\rho\to 0^+$ uniformly on $k$ because of~\eqref{E:RK_cond_Qgeqdw}.
\end{proof}

When $0<Q<d_w$ or $d_w=Q$, the available embeddings are stronger than~\eqref{E:Sobolev_embedding_dw} and the corresponding result can be derived in a similar fashion with less restrictions.
\begin{theorem}\label{T:RK_Qledw}
Assume $0<Q\leq d_w$. Any sequence $\{f_n \}_{n\geq 1}\subset KS^{d_w/2,2}(X)$ such that 
\begin{equation}\label{E:RK_cond_Qledw}
\sup_{n \ge 1} (\| f_n \|_{L^2(X,\mu)}+ \liminf_{r\to 0^+}E_{d_w/2,X}(f_n,r))<+\infty
\end{equation}
contains a subsequence that converges in $L^\alpha(X,\mu)$ for any $1 \le \alpha < \infty$.
\end{theorem}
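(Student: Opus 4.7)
The plan is to mimic the argument for Theorem~\ref{T:RK_Qgeqdw} essentially verbatim, replacing the borderline Sobolev embedding $\|f\|_{L^{2Q/(Q-d_w)}} \lesssim \|f\|_{L^2}+\liminf E_{d_w/2}(f,r)^{1/2}$ by the stronger embeddings available in the regime $Q\le d_w$. Concretely, \cite[Theorem 4.3]{Bau22}, applicable thanks to Lemma~\ref{L:4KS_cond_dw} and the lower mass bound \eqref{E:lower_mass_bound}, yields
\[
\|f_n\|_{L^q(X,\mu)}\le C_q\bigl(\|f_n\|_{L^2(X,\mu)}+\liminf_{r\to 0^+}E_{d_w/2,X}(f_n,r)^{1/2}\bigr)
\]
for every finite $q\in[2,\infty)$ when $Q=d_w$, and in fact up to $q=\infty$ (in the Morrey sense) when $Q<d_w$. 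In either case, the assumption \eqref{E:RK_cond_Qledw} ensures $\{f_n\}_{n\ge 1}$ is uniformly bounded in $L^q(X,\mu)$ for every $q<\infty$.

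Second, fix any $q>2$ for which the embedding is available, and use reflexivity of $L^q(X,\mu)$ to extract a subsequence $\{f_{n_k}\}_{k\ge 1}$ weakly convergent in $L^q$ to some $f\in L^q(X,\mu)$. By a diagonal argument on an increasing sequence of $q$'s, we may also assume uniform boundedness of the subsequence in $L^q$ for every $q<\infty$.

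Third, I would prove $f_{n_k}\to f$ in measure by repeating word-for-word the telescoping-plus-maximal-function argument from Theorem~\ref{T:RK_Qgeqdw}. The decomposition
\[
\mu(\{|f-f_k|>\varepsilon\})\le A_{\rho,\varepsilon}+B_{\rho,k,\varepsilon}+C_{\rho,k,\varepsilon}
\]
is still valid: $A_{\rho,\varepsilon}\to 0$ as $\rho\to 0$ by the Lebesgue differentiation theorem, $B_{\rho,k,\varepsilon}\to 0$ as $k\to\infty$ by weak $L^q$-convergence of $\{f_{n_k}\}$ (which makes $g\mapsto\mathbf{1}_{B(x,\rho)}(\cdot)g$ pass to the limit), and the telescoping bound combined with the maximal inequality of Theorem~\ref{T:loc_maximal_thm} applied to $f_k$ gives
\[
C_{\rho,k,\varepsilon}\le C\,\frac{\rho^{d_w}}{\varepsilon^2}\,\liminf_{r\to 0^+}E_{d_w/2,X}(f_k,r),
\]
which tends to zero as $\rho\to 0$ uniformly in $k$ by \eqref{E:RK_cond_Qledw}. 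The exponent $d_w$ being positive is all that is needed for the geometric series $\sum 2^{-md_w/2}$ to converge, so no restriction on $Q$ enters here.

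Finally, to upgrade convergence in measure to strong convergence in $L^\alpha$ for an arbitrary $\alpha\in[1,\infty)$, fix such $\alpha$, pick any $q>\alpha$ for which the uniform $L^q$-bound holds, and use uniform integrability of $\{|f_{n_k}-f|^\alpha\}$ (a consequence of that uniform bound via H\"older's inequality) together with Vitali's convergence theorem, exactly as in \cite[Lemma 8.2]{HK00}. No real obstacle arises: the only point requiring care is to make sure the Sobolev embedding from \cite[Theorem 4.3]{Bau22} indeed covers both subcases $Q<d_w$ and $Q=d_w$ with target space containing some $L^q$, $q>2$; everything else is a mechanical repetition of the proof of Theorem~\ref{T:RK_Qgeqdw}.
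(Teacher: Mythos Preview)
Your proposal is correct and follows essentially the same route as the paper: invoke the stronger embeddings from \cite[Theorem 4.3]{Bau22} in the regime $Q\le d_w$ (Morrey-type $L^\infty$ bound for $Q<d_w$, Trudinger--Moser giving all $L^q$ for $Q=d_w$) to obtain uniform $L^q$-boundedness, then repeat verbatim the weak-compactness plus convergence-in-measure argument of Theorem~\ref{T:RK_Qgeqdw}. The diagonal extraction you mention is harmless but unnecessary, since a single $q>\alpha$ already suffices for the Vitali step.
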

\begin{proof}
For $0<Q<d_w$ it holds that
\[
\|f\|_{L^\infty(X,\mu)}\leq C\big(\|f\|_{L^2(X,\mu)}+ \liminf_{r\to 0^+}E_{d_w/2,X}(f,r)^{1/2}\big)^{\theta}\|f\|_{L^2(X,\mu)}^{1-\theta}
\]
for $\theta=\frac{Q}{d_w}$ and any $f\in KS^{d_w/2,2}(X)$, c.f.~\cite[Theorem 4.3]{Bau22}. Since $X$ is compact, the latter and~\eqref{E:RK_cond_Qledw} imply that $\{f_n \}_{n\geq 1}$ is bounded. The same proof of Theorem~\ref{T:RK_Qgeqdw} after~\eqref{E:Sobolev_embedding_dw} applies and the result follows. In the case $d_w=Q$, the Trudinger-Moser inequality holds, c.f.~\cite[Theorem 4.3]{Bau22} and in particular implies that $\{f_n \}_{n\geq 1}$ is bounded in any $L^q(X,\mu)$. 
\end{proof}

As a conclusion, and in any case we have the following result:

\begin{cor}\label{C:Rellich-Kondrachev-L2}
Any sequence $\{f_n \}_{n\geq 1}\subset KS^{d_w/2,2}(X)$ such that 
\begin{equation*}
\sup_{n \ge 1} (\| f_n \|_{L^2(X,\mu)} +\liminf_{r\to 0^+}E_{d_w/2,X}(f_n,r))<\infty
\end{equation*}
contains a subsequence that converges in $L^2(X,\mu)$.
\end{cor}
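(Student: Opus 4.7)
The plan is essentially a case analysis that reduces to the two Rellich-Kondrachov theorems proved immediately above, exploiting the fact that in each case the admissible range of $L^\alpha$ exponents includes $\alpha = 2$. All the technical heavy lifting (the Sobolev/Trudinger-Moser embeddings, the maximal function estimate of Theorem \ref{T:loc_maximal_thm}, the telescoping/Poincar\'e argument, and the verification that weakly convergent bounded sequences converge in measure) has already been done, so there is no genuine new obstacle to overcome.

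Concretely, I would split according to the value of the doubling exponent $Q$ from \eqref{eq:mass-bounds}. If $Q > d_w$, Theorem \ref{T:RK_Qgeqdw} applies and furnishes a subsequence converging in $L^\alpha(X,\mu)$ for every $1 \le \alpha < \frac{2Q}{Q - d_w}$. Since $d_w > 0$, the upper threshold satisfies $\frac{2Q}{Q - d_w} > 2$, as already noted in Remark \ref{R:RK_Qgeqdw}, so $\alpha = 2$ is permissible and the desired $L^2$-convergent subsequence is obtained directly.

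If instead $0 < Q \le d_w$, Theorem \ref{T:RK_Qledw} applies (the Sobolev embedding degenerates into an $L^\infty$ bound when $Q < d_w$ and into a Trudinger-Moser inequality when $Q = d_w$, giving bounds in every $L^q(X,\mu)$) and produces a subsequence converging in $L^\alpha(X,\mu)$ for every $1 \le \alpha < \infty$; in particular $\alpha = 2$ works.

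The ``hard part'' here is purely bookkeeping: checking that $\{f_n\}$ does satisfy the hypothesis of each of Theorems \ref{T:RK_Qgeqdw} and \ref{T:RK_Qledw}, which is immediate since the hypothesis of the corollary is literally \eqref{E:RK_cond_Qgeqdw}/\eqref{E:RK_cond_Qledw}. Combining the two cases yields the corollary in full generality, independent of how $Q$ and $d_w$ compare.
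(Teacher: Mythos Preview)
Your proposal is correct and is exactly the intended argument: the corollary is simply the union of the two preceding theorems, specialized to $\alpha=2$, which lies in the admissible range in both the $Q>d_w$ and $Q\le d_w$ cases. The paper gives no separate proof beyond the phrase ``As a conclusion, and in any case we have the following result,'' so there is nothing further to compare.
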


\subsection{Main result}\label{SS:non-strictly}
We continue in the setting of section~\ref{SS:Fractal_sps} with the additional controlled cutoff condition that also appeared in \cite{Bau22}.
\begin{assump}[Controlled cutoff condition]\label{A:CCC}
For every $\eps >0$ there exists a covering $\{B_i^\eps=B(x_i,\eps)\}_i$ of $X$, so that $\{B_i^{5\eps}\}_i$ has the bounded overlap property (uniformly in $\eps$) and an associated family of functions $\pip_i^\eps$ satisfying
\begin{itemize}
\item $\pip_i^\eps \in KS^{d_w/2,2}(X)\cap C(X)$;
\item $0\le \pip_i^\eps\le 1$ on $X$;
\item $\sum_i\pip_i^\eps=1$ on $X$; 
\item $\pip_i^\eps=0$ in $X\setminus B_i^{2\eps}$;
\item $\limsup\limits_{r \to 0^+} E_{d_w/2,X}(\pip_i^\eps,r) \le C\varepsilon^{-d_w} \mu(B_i^{\eps})$.
\end{itemize}
\end{assump}

\begin{remark}
Even though the assumption \ref{A:CCC} might seem difficult to check at first, it is in essence only a capacity estimate requirement for balls,  and in practice the covering $B_i^\eps$ and the associated partition of unity $\pip_i^\eps$ are obtained using standard covering arguments in doubling metric measure spaces. Indeed, assume that for every ball $B$ with radius $\varepsilon$ one can find a non negative $\phi \in C(X) \cap KS^{d_w/2,2}(X)$  supported inside of $B$  with  $\phi=1$ on $ B/2$ such that
\[
 \limsup_{r\to 0^+} E_{d_w/2,X}(\phi,r)\le C \frac{\mu(B)}{\varepsilon^{d_w}}.
\]
Then assumption \ref{A:CCC} is easily proved to be satisfied using covering by balls satisfying the bounded overlap property as in Section 4.1 in \cite{HKST15}. In  particular, Proposition \ref{P:CSdw_CCC} yields many situations where the assumption \ref{A:CCC} is satisfied.
\end{remark}
%
%
%

Under these assumptions on the underlying space and its associated Korevaar-Schoen energy, this section shows the existence of a naturally associated Dirichlet form.
\begin{theorem}\label{T:main_dw}
There exists a Dirichlet form $(\mathcal{E},\mathcal{F})$ on $L^2(X,\mu)$ such that
\begin{enumerate}[wide=0em,label={\rm(\roman*)}]
\item $\mathcal{E}$ has domain $\mathcal{F}=KS^{d_w/2,2}(X)$;
\item$\mathcal{E}$ is a Mosco limit of $E_{d_w/2,X}(f,r_n)$ where $r_n$ is a positive sequence such that $r_n \to 0$;
\item $(\mathcal{E},KS^{1,2}(X))$ is strongly local and regular with core $KS^{1,2}(X)\cap C(X)$;
\item  $\mathcal{E}$ satisfies the 2-Poincar\'e inequality for the energy measures
\[
\int_{B(x,R)} | f(y) -f_{B(x,R)}|^2 d\mu (y) \le C R^{d_w} \int_{B(x,\lambda R)} d\Gamma (f,f).
\]
\end{enumerate}
\end{theorem}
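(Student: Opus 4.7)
The plan is to mirror the architecture of Theorem~\ref{T:main}, suitably adapted to the $d_w > 2$ regime. I would extract a $\Gamma$-limit along a subsequence $r_n \to 0$ by invoking~\cite[Theorem~2.1]{KS05} with the kernel $k_r(x,y) = r^{-d_w}\mu(B(x,r))^{-1}\mathbf{1}_{B(x,r)}(y)$, then upgrade the convergence to Mosco convergence using the asymptotic compactness supplied by Corollary~\ref{C:Rellich-Kondrachev-L2}, and finally derive the structural properties of the limit form (regularity, strong locality, $2$-Poincar\'e inequality for the energy measures).

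The key technical input is a fractal-type analogue of Lemma~\ref{L:Kumagai-Sturm_condition}: for every $f_n \to f$ in $L^2(X,\mu)$ and every $\varepsilon_n \to 0^+$,
\[
\sup_{r > 0} E_{d_w/2,X}(f,r) \le C \liminf_{n \to \infty} E_{d_w/2,X}(f_n,\varepsilon_n).
\]
Following the smoothing scheme used in the Cheeger case, I would set $f_{n,\varepsilon} = \sum_i f_{n,B_i^\varepsilon}\varphi_i^\varepsilon$ using the covering $\{B_i^\varepsilon\}$ and continuous partition of unity $\{\varphi_i^\varepsilon\}$ furnished by Assumption~\ref{A:CCC}. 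A careful expansion of $f_{n,\varepsilon}(x) - f_{n,\varepsilon}(y)$ splits into a fluctuation piece controlled by oscillations of $f_n$ across nearby balls of scale $\varepsilon$ and a ``constant'' piece controlled by the cutoff energy $\limsup_{r\to 0^+} E_{d_w/2,X}(\varphi_i^\varepsilon,r) \le C\varepsilon^{-d_w}\mu(B_i^\varepsilon)$; combining both with the bounded overlap of $\{B_i^{5\varepsilon}\}$ should yield a bound of the form $\sup_{r > 0} E_{d_w/2,X}(f_{n,\varepsilon_n/c},r) \le C \liminf_n E_{d_w/2,X}(f_n,\varepsilon_n)$. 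The Lebesgue differentiation argument from the proof of Lemma~\ref{L:Kumagai-Sturm_condition} then shows $f_{n,\varepsilon_n/c} \to f$ in $L^2$, and Lemma~\ref{L:4KS_cond_dw} delivers the claim. Feeding this estimate into~\cite[Theorem~2.1]{KS05} produces a strongly local symmetric Markovian form $(\mathcal{E},\mathcal{F})$ with $\mathcal{F} = KS^{d_w/2,2}(X)$ and $\mathcal{E}(f,f) \simeq \liminf_{r \to 0^+} E_{d_w/2,X}(f,r)$, realized as the $\Gamma$-limit along a suitable sequence $r_n \to 0$.

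To upgrade $\Gamma$-convergence to Mosco convergence I would apply \cite[Lemma~2.3.2]{Mos94}. Its asymptotic compactness hypothesis — any sequence $\{f_n\} \subset L^2(X,\mu)$ with $\liminf_n (E_{d_w/2,X}(f_n,\varepsilon_n) + \|f_n\|_{L^2}^2) < \infty$ has a subsequence converging strongly in $L^2(X,\mu)$ — follows from the compatibility estimate above together with Corollary~\ref{C:Rellich-Kondrachev-L2} (this is where the compactness of $X$ is essential). Regularity with core $KS^{d_w/2,2}(X) \cap C(X)$ is obtained by noting that the smoothings $f_\varepsilon$ lie in $KS^{d_w/2,2}(X) \cap C(X)$ (Assumption~\ref{A:CCC}), approximate any $f \in KS^{d_w/2,2}(X)$ in the $\mathcal{E}_1$-norm, and for $f \in C(X)$ converge uniformly. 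Strong locality is inherited directly from~\cite[Theorem~2.1]{KS05}.

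Finally, the $2$-Poincar\'e inequality for $\Gamma(f,f)$ is proved in parallel with Theorem~\ref{T:2PI_Gamma}: a version of Proposition~\ref{P:limsup_vs_Gamma} with exponent $d_w$, supplied by~\cite[Remark~1(f)]{KS05} applied to the present kernel, together with the assumed Korevaar--Schoen $2$-Poincar\'e~\eqref{E:2PI_KS_dw} and an approximation argument in the $\mathcal{E}_1$-norm via the smoothings $f_\varepsilon$, yields the estimate first on the core and then on all of $KS^{d_w/2,2}(X)$. The main obstacle I anticipate is the compatibility estimate itself: unlike in the Cheeger case, where the Lipschitz constant of $f_{n,\varepsilon}$ could be controlled pointwise by a local $L^2$ oscillation of $f_n$, here I must control a non-local quantity (Korevaar--Schoen energy at scale $r$) by the Korevaar--Schoen energy of $f_n$ at scale $\varepsilon$, with both scales varying and the cutoff functions $\varphi_i^\varepsilon$ controlled only through their own $KS$-energy rather than a pointwise Lipschitz bound, which calls for a more delicate bilinear decomposition tied closely to the covering geometry.
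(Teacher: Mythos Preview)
Your overall architecture matches the paper's: Lemma~\ref{L:Kumagai-Sturm_condition_dw} (the Kumagai--Sturm compatibility estimate via the smoothings $f_{n,\varepsilon}$ from Assumption~\ref{A:CCC}), Proposition~\ref{P:KS_as_Gamma_dw} ($\Gamma$-limit via~\cite[Theorem~2.1]{KS05}), Lemma~\ref{L:asymp_cpt_cond_dw} and Corollary~\ref{C:Mosco_limit_dw} (asymptotic compactness via Corollary~\ref{C:Rellich-Kondrachev-L2} and~\cite[Lemma~2.3.2]{Mos94}), and Theorem~\ref{T:2PI_Gamma_dw}. Two points deserve comment.

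\emph{Regularity.} Your assertion that the smoothings $f_\varepsilon$ ``approximate any $f \in KS^{d_w/2,2}(X)$ in the $\mathcal{E}_1$-norm'' is not justified and is in fact the delicate step. The bounds you have only give $f_\varepsilon \to f$ in $L^2$ together with $\sup_\varepsilon \mathcal{E}(f_\varepsilon,f_\varepsilon) < \infty$, which yields weak convergence in the Hilbert space $(\mathcal{F},\mathcal{E}_1)$, not strong. The paper does not attempt to prove strong $\mathcal{E}_1$-convergence of $f_\varepsilon$ directly; instead (Proposition~\ref{P:KS_as_Gamma_dw}) it invokes the reflexivity of $KS^{d_w/2,2}(X)$ to extract a weakly convergent subsequence and then applies Mazur's lemma to obtain a \emph{convex combination} of the smoothings that converges strongly. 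You should either adopt this argument or supply an independent reason why $\mathcal{E}(f_\varepsilon - f,f_\varepsilon - f) \to 0$.

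\emph{The $2$-Poincar\'e inequality for $\Gamma$.} Your proposed approximation argument (first on the core, then by density) is unnecessary here. Unlike the Cheeger case, Assumption~\ref{A:2PI_KS_dw} already holds for \emph{every} $f \in KS^{d_w/2,2}(X)$, so the paper's proof (Theorem~\ref{T:2PI_Gamma_dw}) is a two-line chain: the assumed inequality~\eqref{E:2PI_KS_dw} followed by the $d_w$-analogue of Proposition~\ref{P:limsup_vs_Gamma}, with no density step.
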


\subsection{Existence of the Dirichlet form}
We start by proving the existence of the Dirichlet form $(\mathcal{E},KS^{d_w/2,2}(X))$ in $L^2(X,\mu)$ as a $\Gamma$-limit of forms in Proposition~\ref{P:KS_as_Gamma_dw}, which in the next section will be upgraded to a Mosco limit. The following preparatory lemma reproduces Lemma~\ref{L:Kumagai-Sturm_condition} in the strictly local case.

\begin{lem}\label{L:Kumagai-Sturm_condition_dw}
Let $\{\eps_n\}_{n\geq 0}$ with $\lim_{n\to\infty}\eps_n=0$. There exists a constant $C>0$ such that
\[
\liminf_{n \to +\infty}  E_{d_w/2,X}(f_n,\eps_n) \geq C\sup_{r>0} E_{d_w/2,X}(f,r) 
\]
for all $f\in L^2(X,\mu)$ and all $\{f_n\}_{n\geq 1}\subset L^2(X,\mu)$ such that $f_n\to f$ in $L^2(X,\mu)$. 
\end{lem}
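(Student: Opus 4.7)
The plan is to mirror the proof of Lemma~\ref{L:Kumagai-Sturm_condition}, with the Lipschitz partition of unity replaced by the controlled cutoff family provided by Assumption~\ref{A:CCC}. Given $\varepsilon>0$, fix the covering $\{B_i^\varepsilon\}$ and the associated family $\{\varphi_i^\varepsilon\}$ from \ref{A:CCC}, and set
\[
f_{n,\varepsilon}:=\sum_i f_{n,B_i^\varepsilon}\,\varphi_i^\varepsilon,\qquad f_{n,B_i^\varepsilon}:=\fint_{B_i^\varepsilon} f_n\,d\mu.
\]
A first observation is that Lemma~\ref{L:4KS_cond_dw} applied to $\varphi_i^\varepsilon\in KS^{d_w/2,2}(X)$ upgrades the estimate $\limsup_{r\to 0^+}E_{d_w/2,X}(\varphi_i^\varepsilon,r)\le C\varepsilon^{-d_w}\mu(B_i^\varepsilon)$ of \ref{A:CCC} to the uniform bound $\sup_{r>0}E_{d_w/2,X}(\varphi_i^\varepsilon,r)\le C\varepsilon^{-d_w}\mu(B_i^\varepsilon)$, which plays the role that the Lipschitz bound $\mathrm{Lip}(\varphi_i^\varepsilon)\le C/\varepsilon$ played in the strictly local case.

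The core step is to establish that
\[
E_{d_w/2,X}(f_{n,\varepsilon},r)\le C\,E_{d_w/2,X}(f_n,C\varepsilon)\qquad\text{whenever }0<r\le\varepsilon,
\]
with $C$ independent of $n,r,\varepsilon$. Exploiting $\sum_i\varphi_i^\varepsilon\equiv 1$, for $x\in B_j^\varepsilon$ and $y\in B(x,r)\subset B_j^{2\varepsilon}$ one writes
\[
f_{n,\varepsilon}(x)-f_{n,\varepsilon}(y)=\sum_{i:\,B_i^{2\varepsilon}\cap B_j^{2\varepsilon}\ne\emptyset}\bigl(f_{n,B_i^\varepsilon}-f_{n,B_j^\varepsilon}\bigr)\bigl(\varphi_i^\varepsilon(x)-\varphi_i^\varepsilon(y)\bigr),
\]
so Cauchy--Schwarz together with the bounded cardinality of the index set yields a pointwise bound on $|f_{n,\varepsilon}(x)-f_{n,\varepsilon}(y)|^2$. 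Integrating over $x\in B_j^\varepsilon$ and $y\in B(x,r)$, summing over $j$ by bounded overlap, and inserting the uniform energy bound on each $\varphi_i^\varepsilon$ leads to
\[
E_{d_w/2,X}(f_{n,\varepsilon},r)\le C\varepsilon^{-d_w}\sum_{i,j\text{ neighbors}}|f_{n,B_i^\varepsilon}-f_{n,B_j^\varepsilon}|^2\mu(B_i^\varepsilon).
\]
A two-fold application of Jensen's inequality combined with doubling bounds each summand by $C\int_{B_i^\varepsilon}\fint_{B(z,C\varepsilon)}|f_n(z)-f_n(w)|^2\,d\mu(w)\,d\mu(z)$; bounded overlap and the definition of $E_{d_w/2,X}$ then close the estimate.

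To conclude, I would choose $\eta_n:=\varepsilon_n/C$ with $C$ large enough that the previous inequality becomes $E_{d_w/2,X}(f_{n,\eta_n},r)\le C\,E_{d_w/2,X}(f_n,\varepsilon_n)$ for every $r\le \eta_n$. The $L^2$-convergence $f_{n,\eta_n}\to f$ follows exactly as in Lemma~\ref{L:Kumagai-Sturm_condition} from the pointwise estimate $|f_{n,\eta_n}(x)-f_n(x)|\le C\fint_{B(x,C\eta_n)}|f_n(x)-f_n(y)|\,d\mu(y)$ combined with Lebesgue differentiation and the $L^2$-boundedness of the maximal function of $f$. For each fixed $r>0$, extracting an almost-everywhere convergent subsequence and applying Fatou's lemma in the double integral defining $E_{d_w/2,X}(\cdot,r)$ yields
\[
E_{d_w/2,X}(f,r)\le\liminf_{n\to\infty} E_{d_w/2,X}(f_{n,\eta_n},r)\le C\liminf_{n\to\infty} E_{d_w/2,X}(f_n,\varepsilon_n);
\]
taking the supremum over $r>0$ completes the proof.

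The main obstacle is that the cutoffs $\varphi_i^\varepsilon$ are no longer Lipschitz, so the pointwise control of $\mathrm{Lip}(f_{n,\varepsilon})$ used in Lemma~\ref{L:Kumagai-Sturm_condition} is unavailable. Assumption~\ref{A:CCC} provides exactly the substitute that is needed: a capacity-type Korevaar--Schoen energy bound on the cutoffs, which, promoted to a $\sup_r$ estimate via Lemma~\ref{L:4KS_cond_dw}, plays the analogue of the Lipschitz bound. A secondary technical point is the careful tracking of dimensional dilation constants so that the final inequality involves $E_{d_w/2,X}(f_n,\varepsilon_n)$ and not a dilated scale, which we handle by taking the cutoff scale to be a fixed small multiple of $\varepsilon_n$.
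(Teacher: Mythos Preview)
Your strategy mirrors the paper's closely, but there is a genuine gap in the final step. You establish the core inequality $E_{d_w/2,X}(f_{n,\eta_n},r)\le C\,E_{d_w/2,X}(f_n,\varepsilon_n)$ only for $r\le\eta_n$; this restriction stems from the pointwise decomposition, which requires $y\in B_j^{2\varepsilon}$ in order to control the index set of nonvanishing $\varphi_i^\varepsilon$. You then fix $r>0$ and send $n\to\infty$, but since $\eta_n\to 0$ the constraint $r\le\eta_n$ is eventually violated, so the second inequality in your chain
\[
E_{d_w/2,X}(f,r)\le\liminf_{n\to\infty} E_{d_w/2,X}(f_{n,\eta_n},r)\le C\liminf_{n\to\infty} E_{d_w/2,X}(f_n,\varepsilon_n)
\]
is not justified.

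The fix is to apply Lemma~\ref{L:4KS_cond_dw} to $f_{n,\varepsilon}$ itself rather than (or in addition to) the individual cutoffs $\varphi_i^\varepsilon$. This is precisely the paper's route: it splits the core estimate into (a) $\sup_{r>0}E_{d_w/2,X}(f_{n,\varepsilon},r)\le C\liminf_{r\to 0^+}E_{d_w/2,X}(f_{n,\varepsilon},r)$ via Lemma~\ref{L:4KS_cond_dw}, and (b) $\limsup_{r\to 0^+}E_{d_w/2,X}(f_{n,\varepsilon},r)\le C\,E_{d_w/2,X}(f_n,6\varepsilon)$ via the decomposition and the $\limsup$ bound on $\varphi_i^\varepsilon$ from Assumption~\ref{A:CCC}. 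Chaining (a) and (b) yields the core inequality for \emph{all} $r>0$, after which your Fatou argument goes through unchanged. Note also that, once organized this way, upgrading the cutoff bound to a $\sup_r$ estimate becomes unnecessary: step (b) concerns only small $r$, so the $\limsup$ bound provided directly by Assumption~\ref{A:CCC} already suffices.
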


\begin{proof}
Let $\{f_n\}\subset L^2(X,\mu)$ so that $f_n\to f$ in $L^2(X,\mu)$. Fix $\varepsilon>0$ and consider the $\varepsilon$-covering $\{B_i^{\varepsilon}\}_{i\geq 1}$ and the partition of unity $\{\varphi_i^\varepsilon\}_{i}$ from the controlled cutoff condition. Further, set
\begin{equation}\label{E:def_f_eps_dw}
f_{n,\varepsilon}:=\sum_if_{n,B_i^\varepsilon}\varphi_i^\varepsilon.
\end{equation}
Since $\varphi_i^\varepsilon\in KS^{d_w/2,2}(X)$ and $X$ compact, the linearity of $KS^{d_w/2,2}(X)$ implies that $f_{n,\varepsilon}\in KS^{d_w/2,2}(X)$. 
Note that once we prove
\begin{equation}\label{E:KS_cond_dw_help}
\frac{1}{r^{d_w}}\int_X\fint_{B(x,r)}|f_{n,\eps}(x)-f_{n,\eps}(y)|^2d\mu(y)\,d\mu(x) \le \frac{C}{\varepsilon^{d_w}} \int_X \fint_{B(x,6\eps)}|f_n(y)-f_n(x)|^2d\mu(y)\, d\mu(x)
\end{equation}
for any $r>0$, the claim will follow verbatim  the proof of Lemma~\ref{L:Kumagai-Sturm_condition} after~\eqref{eq:sup-gradient}.
To that end, we first show that
\begin{equation}\label{E:KS_cond_dw_help_a}
    \frac{1}{r^{d_w}}\int_X\fint_{B(x,r)}|f_{n,\eps}(x)-f_{n,\eps}(y)|^2d\mu(y)\,d\mu(x) \le C\liminf_{r\to 0^+}E_{d_w/2,X}(f_{n,\varepsilon},r)
\end{equation}
and secondly that
\begin{equation}\label{E:KS_cond_dw_help_b}
    \limsup_{r\to 0^+}E_{d_w/2,X}(f_{n,\varepsilon},r)\le \frac{C}{\varepsilon^{d_w}} \int_X \fint_{B(x,6\eps)}|f_n(y)-f_n(x)|^2d\mu(y)\, d\mu(x).
\end{equation}

Notice that~\eqref{E:KS_cond_dw_help_a} follows from the previous Lemma~\ref{L:4KS_cond_dw} applied to $f_{n,\varepsilon}$. To prove~\eqref{E:KS_cond_dw_help_b}, the finite overlap property implies for $x \in B_j^\varepsilon$, $y \in B_j^{2\varepsilon}$
\[
|f_{n,\varepsilon}(x)-f_{n,\varepsilon}(y)|^2\leq C\sum_{i\colon B_i^{2\varepsilon}\cap B_j^{2\varepsilon}\neq \emptyset}|f_{n,B_i^\varepsilon}-f_{n,B_j^\varepsilon}|^2|\varphi_i^\varepsilon(x)-\varphi_i^\varepsilon(y)|^2.
\]
Thus, for any $0<r <\varepsilon$
\begin{align*}
    &\frac{1}{r^{d_w}}\int_X\fint_{B(x,r)}|f_{n,\varepsilon}(x)-f_{n,\varepsilon}(y)|^2d\mu(y)\,d\mu(x)\\
    &\leq \frac{1}{r^{d_w}}\sum_{j}\int_{B_j^\varepsilon}\fint_{B(x,r)}|f_{n,\varepsilon}(x)-f_{n,\varepsilon}(y)|^2d\mu(y)\,d\mu(x)\\
    &\leq C\sum_j\sum_{i\colon B_i^{2\varepsilon}\cap B_j^{2\varepsilon}\neq \emptyset}|f_{n,B_i^\varepsilon}-f_{n,B_j^\varepsilon}|^2\frac{1}{r^{d_w}}\int_{B_j^\varepsilon}\fint_{B(x,r)}|\varphi_i^\varepsilon(x)-\varphi_i^\varepsilon(y)|^2d\mu(y)\,d\mu(x).
\end{align*}
Taking $\limsup_{r\to 0^+}$ on both sides of the inequality, 
the properties of the cutoff condition yield
\begin{align}
    \limsup_{r\to 0^+}E_{d_w/2,X}(f_{n,\varepsilon},r)&\leq C\sum_j\sum_{i\colon B_i^{2\varepsilon}\cap B_j^{2\varepsilon}\neq \emptyset}|f_{n,B_i^\varepsilon}-f_{n,B_j^\varepsilon}|^2\limsup_{r\to 0^+}E_{d_w/2,B_j^{\varepsilon}}(\varphi_i^\varepsilon,r)\notag\\
    &\leq \frac{C}{\varepsilon^{d_w}}\sum_j\sum_{i\colon B_i^{2\varepsilon}\cap B_j^{2\varepsilon}\neq \emptyset}|f_{n,B_i^\varepsilon}-f_{n,B_j^\varepsilon}|^2\mu(B_i^\varepsilon).\label{E:KS_cond_help_b1}
\end{align}
Further, Cauchy-Schwarz inequality and volume  doubling property imply
\begin{align*}
    |f_{n,B_i^\varepsilon}-f_{n,B_j^\varepsilon}|^2&=\Big(\fint_{B_j^\varepsilon}\big(f_{n,B_i^\varepsilon}-f(x)\big)d\mu(x)\Big)^2\\
    &\leq \fint_{B_j^\varepsilon}|f_{n,B_i^\varepsilon}-f_n(x)|^2d\mu(x)\\
    &\leq \fint_{B_j^\varepsilon}\fint_{B_i^\varepsilon}|f_n(y)-f_n(x)|^2d\mu(y)\,d\mu(x)\\
    &\leq C\fint_{B_j^\varepsilon}\fint_{B(x,6\varepsilon)}|f_n(y)-f_n(x)|^2d\mu(y)\,d\mu(x).
\end{align*}
Thus, it now follows from~\eqref{E:KS_cond_help_b1} and the bounded overlap property that
\begin{align*}
\limsup_{r\to 0^+}E_{d_w/2,X}(f_{n,\varepsilon},r)&\leq \frac{C}{\varepsilon^{d_w}}\sum_j \sum_{i\colon B_i^{2\varepsilon}\cap B_j^{2\varepsilon}\neq \emptyset}\mu(B_i^\varepsilon)\fint_{B_j^\varepsilon}\fint_{B(x,6\varepsilon)}|f_n(y)-f_n(x)|^2d\mu(y)\,d\mu(x)\\
&=\frac{C}{\varepsilon^{d_w}}\sum_j \sum_{i\colon B_i^{2\varepsilon}\cap B_j^{2\varepsilon}\neq \emptyset}\int_{B_j^\varepsilon}\fint_{B(x,6\varepsilon)}|f_n(y)-f_n(x)|^2d\mu(y)\,d\mu(x)\\
&\leq \frac{C}{\varepsilon^{d_w}}\sum_j\int_{B_j^\varepsilon}\fint_{B(x,6\varepsilon)}|f_n(y)-f_n(x)|^2d\mu(y)\,d\mu(x)\\
&\leq \frac{C}{\varepsilon^{d_w}}\int_X\fint_{B(x,6\varepsilon)}|f_n(y)-f_n(x)|^2d\mu(y)\,d\mu(x).
\end{align*}
which is~\eqref{E:KS_cond_dw_help_b}. The proof is complete.
\end{proof}

\begin{prop}\label{P:KS_as_Gamma_dw}
There exists a strongly local and regular Dirichlet form $(\mathcal{E},KS^{d_w/2,2} (X))$ on $L^2(X,\mu)$ such that for every $f \in KS^{d_w/2,2} (X)$
\begin{equation}\label{E:KS_comp_Gamma_dw}
C_1  \sup_{r >0} E_{d_w/2,X}(f,r) \le \mathcal{E}(f,f) \le C_2 \liminf_{r\to 0^+} E_{d_w/2,X}(f,r).
\end{equation}
In addition, for a suitable sequence $\{r_n\}_{n\geq 1}$  converging to zero
\begin{equation}\label{E:KS_as_Gamma_dw}
    \mathcal{E}(f,f)=\Gamma{-}\!\!\lim_{n\to\infty}E_{d_w/2,X}(f,r_n).
\end{equation}
\end{prop}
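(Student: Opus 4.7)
The plan is to mirror the proof of Theorem~\ref{T:KS_as_Gamma} verbatim, this time with the walk-dimension-$d_w$ kernel
\[
k_r(x,y)=\frac{1}{r^{d_w}\,\mu(B(x,r))}\,\mathbf{1}_{B(x,r)}(y),
\]
so that $E_{d_w/2,X}(\cdot,r)$ fits into the abstract framework of \cite[Theorem~2.1]{KS05}; see also \cite[Theorem~3.3]{Stu98a}. The two hypotheses of that result are exactly: the self-improvement inequality $\sup_{r>0}E_{d_w/2,X}(f,r)\le C\liminf_{r\to 0^+}E_{d_w/2,X}(f,r)$, supplied by Lemma~\ref{L:4KS_cond_dw}; and the $L^2$-lower-semicontinuity
\[
\liminf_{n\to\infty}E_{d_w/2,X}(f_n,\varepsilon_n)\;\geq\;C^{-1}\sup_{r>0}E_{d_w/2,X}(f,r)
\]
for every $L^2$-convergent sequence $f_n\to f$ and every $\varepsilon_n\to 0^+$, supplied by Lemma~\ref{L:Kumagai-Sturm_condition_dw}. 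Feeding these into \cite[Theorem~2.1]{KS05} produces a subsequence $r_n\to 0^+$ along which $E_{d_w/2,X}(\cdot,r_n)$ $\Gamma$-converges in $L^2(X,\mu)$ to a closed symmetric, Markovian and strongly local form $\mathcal E$ whose domain is identified as $KS^{d_w/2,2}(X)$ and that satisfies~\eqref{E:KS_comp_Gamma_dw} directly from the two displayed bounds and the definition of $\Gamma$-convergence.

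For regularity, compactness of $X$ gives $C_c(X)=C(X)$, and I take as core $\mathcal C:=KS^{d_w/2,2}(X)\cap C(X)$. For any $f\in L^2(X,\mu)$ and any $\varepsilon>0$, set $f_\varepsilon:=\sum_i f_{B_i^\varepsilon}\varphi_i^\varepsilon$, where $\{B_i^\varepsilon\}_i$ and $\{\varphi_i^\varepsilon\}_i$ come from Assumption~\ref{A:CCC}. Linearity of $KS^{d_w/2,2}(X)$ together with local finiteness and continuity of the partition of unity place $f_\varepsilon$ in $\mathcal C$. When $f\in C(X)$, uniform continuity of $f$ on the compact $X$ yields $f_\varepsilon\to f$ uniformly, so $\mathcal C$ is dense in $C(X)$ for the sup norm. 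When $f\in KS^{d_w/2,2}(X)$, the $L^2$-convergence $f_\varepsilon\to f$ proceeds exactly as in the proof of Lemma~\ref{L:Kumagai-Sturm_condition}, while the internal bound~\eqref{E:KS_cond_dw_help_b} from the proof of Lemma~\ref{L:Kumagai-Sturm_condition_dw} gives $\limsup_{r\to 0^+}E_{d_w/2,X}(f_\varepsilon,r)\le C\sup_{\rho>0}E_{d_w/2,X}(f,\rho)<\infty$, so $\mathcal E(f_\varepsilon,f_\varepsilon)$ stays uniformly bounded in $\varepsilon$ via~\eqref{E:KS_comp_Gamma_dw}. A Mazur convexification in the Hilbert space $(\mathcal F,\|\cdot\|_{\mathcal E_1})$ then delivers a sequence in $\mathcal C$ converging to $f$ in the $\mathcal E_1$-norm.

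The expected main obstacle is this last step: $\Gamma$-convergence delivers only the one-sided bound $\mathcal E(f,f)\le\liminf_\varepsilon\mathcal E(f_\varepsilon,f_\varepsilon)$, so upgrading $L^2$-convergence with bounded energy to strong $\mathcal E_1$-convergence of core elements requires either the Mazur/weak-compactness argument above, or equivalently feeding the explicit sequence $f_\varepsilon$ into the $\Gamma$-limsup clause of \cite[Theorem~2.1]{KS05}, which by construction produces a recovery sequence of core elements.
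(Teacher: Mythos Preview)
Your proposal is correct and follows essentially the same route as the paper: apply \cite[Theorem~2.1]{KS05} via Lemma~\ref{L:Kumagai-Sturm_condition_dw} to obtain the strongly local $\Gamma$-limit with the two-sided bound~\eqref{E:KS_comp_Gamma_dw}, then take $KS^{d_w/2,2}(X)\cap C(X)$ as core, approximate by the partition-of-unity mollifiers $f_\varepsilon$, and pass from bounded energy plus $L^2$-convergence to $\mathcal E_1$-convergence via weak compactness and Mazur's lemma. Your regularity argument is in fact marginally more direct than the paper's---you work with $f_\varepsilon$ for fixed $f$ and invoke the Hilbert-space structure of $(\mathcal F,\|\cdot\|_{\mathcal E_1})$, whereas the paper routes the bound through the $\Gamma$-recovery sequence and cites reflexivity of $KS^{d_w/2,2}(X)$ from \cite{Bau22}---but the substance is identical.
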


\begin{proof}
Analogous to Theorem~\ref{T:KS_as_Gamma}, in view of Lemma~\ref{L:Kumagai-Sturm_condition_dw} the conditions of~\cite[Theorem 2.1]{KS05} are satisfied.
Thus, there exists a strongly local Dirichlet form fulfilling~\eqref{E:KS_comp_Gamma_dw} and~\eqref{E:KS_as_Gamma_dw}. 
The regularity of the form is a  consequence of Assumption \ref{SS:non-strictly}, which allows to take $KS^{d_w/2,2} (X)\cap C(X)$ as core. Indeed, let $f \in KS^{d_w/2,2} (X)$. Let $\{f_n\}\subset L^2(X,\mu)$ so that $f_n\to f$ in $L^2(X,\mu)$ and $E_{d_w/2,X}(f,r_n) \to \mathcal{E}(f,f)$. Fix $\varepsilon>0$ and consider the $\varepsilon$-covering $\{B_i^{\varepsilon}\}_{i\geq 1}$ and the partition of unity $\{\varphi_i^\varepsilon\}_{i}$ from the controlled cutoff condition. As before, set
\begin{equation*}
f_{n,\varepsilon}:=\sum_if_{n,B_i^\varepsilon}\varphi_i^\varepsilon.
\end{equation*}
Note that from \eqref{E:KS_cond_dw_help} for any $r>0$, 
\begin{equation*}
\frac{1}{r^{d_w}}\int_X\fint_{B(x,r)}|f_{n,r_n/6}(x)-f_{n,r_n/6}(y)|^2d\mu(y)\,d\mu(x) \le \frac{C}{r_n^{d_w}} \int_X \fint_{B(x,r_n)}|f_n(y)-f_n(x)|^2d\mu(y)\, d\mu(x)
\end{equation*}
and that $f_{n,r_n/6}$ converges to $f$ in $L^2(X,\mu)$. Therefore the sequence $f_{n,r_n/6}$ is bounded in $KS^{d_w/2,2}(X)$ for the norm $\|f\|_{L^2(X,\mu)}+ \sup_{r>0} E_{d_w/2,X}(f,r)$. Since the space $KS^{d_w/2,2}(X)$ is reflexive, see \cite{Bau22}, we deduce that there exists a subsequence $f_{n_k,r_{n_k}/6}$ that converges weakly to $f$. Mazur's lemma implies  then that there is a convex combination of the $f_{n_k,r_{n_k}/6}$'s that converges strongly to $f$. This convex combination is in   $KS^{d_w/2,2} (X)\cap C(X)$, therefore the space  $KS^{d_w/2,2}(X)\cap C(X)$ is dense in $KS^{d_w/2,2}(X)$ for the norm $\|f\|_{L^2(X,\mu)}+ \sup_{r>0} E_{d_w/2,X}(f,r)$. 
Finally, for $f \in C(X)$, the sequence 
\[
f_{\varepsilon}:=\sum_i f(x_i) \varphi_i^\varepsilon \in KS^{d_w/2,2} (X)\cap C(X)
\]
does converge uniformly  when $\varepsilon \to 0$ to $f$ where $x_i$ is the center of $B_i^\varepsilon$. Thus the space  $KS^{d_w/2,2}(X)\cap C(X)$ is dense  in $C(X)$  for the supremum norm.
\end{proof}


\subsection{Mosco convergence}
As in the strictly local case, the Mosco convergence is obtained by showing that the sequence of forms $\{E_{d_w/2,X}(\cdot,r_n)\}_{n\geq 0}$ is asymptotically compact. The analogue of Lemma~\ref{L:asymp_cpt_cond} is now a consequence of the Rellich-Kondrachov Theorem proved in Section~\ref{S:Rellich-Kondrachov}.
    
\begin{lem}\label{L:asymp_cpt_cond_dw}
Let $\{\varepsilon_n\}_{n\geq 1}$ with $\varepsilon_n\to 0$. Any sequence $\{f_n\}_{n\geq 1}\subset L^2(X,\mu)$ such that 
\begin{equation}\label{E:asymp_cpt_cond_dw}
\liminf_{n\to\infty} (E_{d_w/2,X}(f_n,\varepsilon_n)+\|f_n\|_{L^2(X,\mu)}^2)<+\infty
\end{equation}
has a subsequence that converges strongly in $L^2(X,\mu)$.
\end{lem}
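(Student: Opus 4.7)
The plan is to follow exactly the strategy used in Lemma~\ref{L:asymp_cpt_cond}, replacing the compactness tool from~\cite[Theorem 8.1]{HK00} by the Rellich--Kondrachov result for Korevaar--Schoen spaces established in Corollary~\ref{C:Rellich-Kondrachev-L2}, and using the regularization via partition of unity already analyzed in Lemma~\ref{L:Kumagai-Sturm_condition_dw}.

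First, from the assumed liminf bound~\eqref{E:asymp_cpt_cond_dw} I extract an increasing subsequence $n_k \to \infty$ such that
\[
\sup_{k\geq 1}\bigl(E_{d_w/2,X}(f_{n_k},\varepsilon_{n_k})+\|f_{n_k}\|_{L^2(X,\mu)}^2\bigr)<+\infty.
\]
Then, with $\varepsilon$-covering $\{B_i^{\varepsilon}\}_i$ and partition of unity $\{\varphi_i^{\varepsilon}\}_i$ from Assumption~\ref{A:CCC}, I form the regularized sequence $g_k:=f_{n_k,\varepsilon_{n_k}/6}$ as in~\eqref{E:def_f_eps_dw}. By Assumption~\ref{A:CCC} and the linearity of $KS^{d_w/2,2}(X)$, each $g_k\in KS^{d_w/2,2}(X)$.

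Next I invoke the key estimate~\eqref{E:KS_cond_dw_help} proved in Lemma~\ref{L:Kumagai-Sturm_condition_dw}, which gives, for every $r>0$,
\[
\frac{1}{r^{d_w}}\int_X\fint_{B(x,r)}|g_k(x)-g_k(y)|^2\,d\mu(y)\,d\mu(x)
\le \frac{C}{\varepsilon_{n_k}^{d_w}}\int_X\fint_{B(x,\varepsilon_{n_k})}|f_{n_k}(y)-f_{n_k}(x)|^2\,d\mu(y)\,d\mu(x),
\]
so that $\sup_r E_{d_w/2,X}(g_k,r) \le C\, E_{d_w/2,X}(f_{n_k},\varepsilon_{n_k})$ is uniformly bounded in $k$. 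Together with a uniform $L^2$ bound on $g_k$ (which follows easily from the construction of $g_k$ as a local average of $f_{n_k}$ and the bounded overlap of the cover), the sequence $\{g_k\}$ fits the hypotheses of Corollary~\ref{C:Rellich-Kondrachev-L2}. Extracting a further subsequence (still indexed by $k$), I obtain $g_k\to f$ in $L^2(X,\mu)$ for some $f\in L^2(X,\mu)$.

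It remains to upgrade this to $f_{n_k}\to f$ in $L^2$. For this I reproduce the estimate~\eqref{E:L2_Lip_est_02} in the present $d_w$-setting: using the same pointwise bound
\[
|g_k(x)-f_{n_k}(x)|\le C\fint_{B(x,6\varepsilon_{n_k}/6)}|f_{n_k}(x)-f_{n_k}(y)|\,d\mu(y),
\]
Cauchy--Schwarz yields
\[
\|g_k-f_{n_k}\|_{L^2(X,\mu)}^2
\le C\int_X\fint_{B(x,\varepsilon_{n_k})}|f_{n_k}(x)-f_{n_k}(y)|^2\,d\mu(y)\,d\mu(x)
= C\,\varepsilon_{n_k}^{d_w}\,E_{d_w/2,X}(f_{n_k},\varepsilon_{n_k}).
\]
Since $\sup_k E_{d_w/2,X}(f_{n_k},\varepsilon_{n_k})<\infty$ and $\varepsilon_{n_k}\to 0$, the right-hand side tends to $0$, and by the triangle inequality $f_{n_k}\to f$ in $L^2(X,\mu)$. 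The main (non-routine) ingredient is the availability of Corollary~\ref{C:Rellich-Kondrachev-L2} applied to the regularized sequence $g_k$; the rest is a direct transcription of the strictly local argument.
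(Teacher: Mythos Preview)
Your proof is correct and follows essentially the same approach as the paper's own argument: regularize via the partition of unity from Assumption~\ref{A:CCC}, use the energy estimate from Lemma~\ref{L:Kumagai-Sturm_condition_dw} to place the regularized sequence in the hypotheses of Corollary~\ref{C:Rellich-Kondrachev-L2}, and then pass from $g_k\to f$ to $f_{n_k}\to f$ via the pointwise averaging bound. The only cosmetic difference is that you cite~\eqref{E:KS_cond_dw_help} whereas the paper cites the equivalent~\eqref{E:KS_cond_dw_help_b}, and you spell out the final $L^2$ comparison more explicitly.
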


\begin{proof}
After possibly extracting a subsequence we might assume
\[
\sup_{n\ge 1} (E_{d_w/2,X}(f_n,\varepsilon_n)+\|f_n\|_{L^2(X,\mu)}^2)<+\infty
\]
Consider the sequence $\{f_{n,\varepsilon_n/6}\}_{n\geq 1}$, where $f_{n,\varepsilon_n/6}$ is defined as in~\eqref{E:def_f_eps_dw}. This sequence is easily seen to be bounded in $L^2$ since $f_n$ is. 
As in~\eqref{E:KS_cond_dw_help_b} from the proof of Lemma~\ref{L:Kumagai-Sturm_condition_dw}, 
\begin{equation*}
    \limsup_{r\to 0^+}E_{d_w/2,X}(f_{n,\varepsilon_{n}/6},r)\leq \frac{C}{\varepsilon_n^{d_w}}\int_X\fint_{B(x,\varepsilon_{n})}|f_{n}(y)-f_{n}(x)|^2d\mu(y)\,d\mu(x)= CE_{d_w/2,X}(f_{n},\varepsilon_{n})
\end{equation*}
holds uniformly on $n$, whence
\begin{equation*}
    \liminf_{r\to 0^+}E_{d_w/2,X}(f_{n,\varepsilon_{n}/6},r) \leq C\sup_{n \ge 1}E_{d_w/2,X}(f_{n},\varepsilon_{n}).
\end{equation*}
From  Corollary~\ref{C:Rellich-Kondrachev-L2} one can find a  subsequence $f_{n_k,\varepsilon_{n_k}/6}$ that converges in $L^2$. Let us call $f\in L^2(X,\mu)$ that limit. Following the same arguments as in~\eqref{E:subseq_conv_Gaussian}  the subsequence $\{f_{n_k}\}_{k\geq 1}$ converges to $f$ because
\begin{equation*}
    \|f-f_{n_k}\|_{L^2(X,\mu)}\leq \|f-f_{n_k,\varepsilon_{n_k/6}}\|_{L^2(X,\mu)}+C\varepsilon_{n_k}^{d_w}E_{d_w/2,X}(f_{n_k},\varepsilon_{n_k})\xrightarrow{k\to\infty}0.
\end{equation*}
\end{proof}

\begin{cor}\label{C:Mosco_limit_dw}
The Dirichlet form $(\mathcal{E},KS^{1,2} (X))$ from Proposition~\ref{P:KS_as_Gamma_dw} is also the Mosco limit of the sequence in~\eqref{E:KS_as_Gamma_dw}.
\end{cor}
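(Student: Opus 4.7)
The plan is to upgrade the $\Gamma$-convergence statement of Proposition~\ref{P:KS_as_Gamma_dw} to Mosco convergence by invoking the classical equivalence between Mosco convergence and $\Gamma$-convergence under an asymptotic compactness assumption, as in \cite[Lemma 2.3.2]{Mos94}. Concretely, the only real difference between the two notions lies in the liminf condition (part (a) of Definitions~\ref{D:Gamma_conv} and~\ref{D:Mosco_conv}), where Mosco requires weak $L^2$-convergence instead of strong. The limsup (recovery sequence) part is automatic from $\Gamma$-convergence, since it already produces strongly convergent sequences.

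For the liminf condition, I would argue as follows. Fix a sequence $\{f_n\}_{n\geq 1}\subset L^2(X,\mu)$ converging weakly in $L^2(X,\mu)$ to some $f$. We may assume without loss of generality that
\[
\ell:=\liminf_{n\to\infty}E_{d_w/2,X}(f_n,r_n)<\infty,
\]
otherwise there is nothing to prove. After passing to a subsequence, still denoted $\{f_n\}$, we may assume that the liminf above is attained as a limit, and in particular that $\sup_n E_{d_w/2,X}(f_n,r_n)<\infty$. Since weakly convergent sequences are bounded in $L^2(X,\mu)$, we also have $\sup_n\|f_n\|_{L^2(X,\mu)}<\infty$, so hypothesis~\eqref{E:asymp_cpt_cond_dw} of Lemma~\ref{L:asymp_cpt_cond_dw} is fulfilled. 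Extracting a further subsequence, we obtain strong $L^2$-convergence of $\{f_n\}$ to some limit $g\in L^2(X,\mu)$; by uniqueness of weak limits, $g=f$.

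Having reduced matters to a strongly convergent subsequence with the same energy liminf, we can now apply the $\Gamma$-liminf inequality from Proposition~\ref{P:KS_as_Gamma_dw} to conclude
\[
\mathcal{E}(f,f)\le\liminf_{n\to\infty}E_{d_w/2,X}(f_n,r_n)=\ell.
\]
Together with the limsup part already available, this is precisely the content of Mosco convergence. No step here is particularly delicate: the heavy lifting has been done in Lemma~\ref{L:asymp_cpt_cond_dw}, which itself relied on the Rellich–Kondrachov type result of Corollary~\ref{C:Rellich-Kondrachev-L2}. The only point worth watching is the legitimacy of passing to subsequences inside a liminf argument, which is standard: if the desired inequality holds for every subsequence along which the liminf is achieved as a genuine limit, then it holds for the original sequence.
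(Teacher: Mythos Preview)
Your proof is correct and follows essentially the same route as the paper: the corollary is stated without proof, relying (as in the analogous Theorem~\ref{T:Mosco_limit}) on Proposition~\ref{P:KS_as_Gamma_dw}, Lemma~\ref{L:asymp_cpt_cond_dw}, and \cite[Lemma~2.3.2]{Mos94}. You have simply unpacked the content of Mosco's lemma---$\Gamma$-convergence plus asymptotic compactness implies Mosco convergence---by hand, which is a perfectly acceptable and standard argument.
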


\subsection{2-Poincar\'e inequality with respect to energy measures}
The 2-Poincar\'e inequality with respect to an energy measure $d\Gamma(f,f)$ is obtained in a similar fashion as in the strictly local case. Substituting the exponent $2$ by $d_w$, one can prove as in Proposition~\ref{P:limsup_vs_Gamma} that there exists $C>0$ and $\lambda>1$ so that
\begin{equation}\label{E:limsup_vs_Gamma_dw}
\limsup_{r\to 0^+}\frac{1}{r^{d_w}}\int_{B(x,R)}\fint_{B(z,r)}|f(z)-f(y)|^2d\mu(y)\,d\mu(z)\leq C\int_{B(x,\lambda R)}d\Gamma(f,f)
\end{equation}
for any $f\in KS^{d_w/2,2}(X)\cap C(X)$, $x\in X$ and $R>0$. The proof of the 2-Poincar\'e inequality with respect to $\Gamma$ simplifies in this case because the standing 2-Poincar\'e inequality from Assumption~\ref{A:2PI_KS_dw} holds for all functions in $ KS^{d_w/2,2}(X)$.
\begin{theorem}\label{T:2PI_Gamma_dw}
There exist $C>0$ and $\Lambda>1$ such that 
\begin{equation}\label{E:2PI_Gamma_dw}
\int_{B(x,R)}|f(y)-f_{B(x,R)}|^2d\mu(z)\leq CR^{d_w}\int_{B(x,\Lambda R)}d\Gamma(f,f)
\end{equation}
for any $f\in KS^{d_w/2,2}(X)$, $x\in X$ and $R>0$.
\end{theorem}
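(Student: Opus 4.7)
The plan is to imitate the proof of Theorem~\ref{T:2PI_Gamma}, but as the remark preceding the statement points out, the argument is substantially shorter in the fractal-like setting because Assumption~\ref{A:2PI_KS_dw} is already formulated for \emph{all} functions in $KS^{d_w/2,2}(X)$; no intermediate smoothing by averages is needed. Schematically, I would chain Assumption~\ref{A:2PI_KS_dw} on the right-hand side with the energy-measure bound~\eqref{E:limsup_vs_Gamma_dw} to convert the Korevaar-Schoen tail into a $\Gamma$-measure on a slightly enlarged ball. This immediately gives \eqref{E:2PI_Gamma_dw} for continuous elements of $KS^{d_w/2,2}(X)$, and a standard density/cutoff argument using that $KS^{d_w/2,2}(X)\cap C(X)$ is a core for $(\mathcal{E},KS^{d_w/2,2}(X))$ (Proposition~\ref{P:KS_as_Gamma_dw}) finishes the proof.

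For the first step, I take $f\in KS^{d_w/2,2}(X)\cap C(X)$. Applying Assumption~\ref{A:2PI_KS_dw} on the ball $B(x,R)$ bounds the left-hand side of~\eqref{E:2PI_Gamma_dw} by $CR^{d_w}\liminf_{r\to 0^+}E_{d_w/2,B(x,\lambda R)}(f,r)$. Since $\liminf\le\limsup$ and by the definition of $E_{d_w/2,U}$, the estimate~\eqref{E:limsup_vs_Gamma_dw}, applied to the ball $B(x,\lambda R)$ with a further dilation $\lambda'>1$, yields
\[
\liminf_{r\to 0^+}E_{d_w/2,B(x,\lambda R)}(f,r)\le C\int_{B(x,\lambda\lambda' R)}d\Gamma(f,f),
\]
and setting $\Lambda:=\lambda\lambda'$ gives \eqref{E:2PI_Gamma_dw} in the continuous case. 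For the second step, given $f\in KS^{d_w/2,2}(X)$ I pick a sequence $\{f_n\}\subset KS^{d_w/2,2}(X)\cap C(X)$ converging to $f$ in the $\mathcal{E}_1$-norm. The left-hand side of \eqref{E:2PI_Gamma_dw} is continuous in the $L^2$ variable (expand the square and apply Cauchy-Schwarz to the average), hence $\int_{B(x,R)}|f_n(y)-f_{n,B(x,R)}|^2d\mu(y)$ converges to its $f$-counterpart. For the right-hand side, I introduce a cutoff $\phi\in KS^{d_w/2,2}(X)\cap C(X)$ with $\phi\equiv 1$ on $B(x,\Lambda R)$ and $\supp\phi\subseteq B(x,\tilde\Lambda R)$ for some $\tilde\Lambda>\Lambda$ (provided by Assumption~\ref{A:CCC}) and use the standard inequality
\[
\Big|\Big(\int_X\phi\,d\Gamma(f_n,f_n)\Big)^{1/2}-\Big(\int_X\phi\,d\Gamma(f,f)\Big)^{1/2}\Big|\le\|\phi\|_\infty^{1/2}\mathcal{E}(f_n-f,f_n-f)^{1/2}
\]
to deduce $\int_{B(x,\Lambda R)}d\Gamma(f_n,f_n)\le\int_X\phi\,d\Gamma(f_n,f_n)\to\int_X\phi\,d\Gamma(f,f)\le\int_{B(x,\tilde\Lambda R)}d\Gamma(f,f)$. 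Passing to the limit and relabelling $\Lambda:=\tilde\Lambda$ concludes the proof.

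The only mildly delicate point, and the one I would be most careful about, is the production of a cutoff $\phi$ in the core $KS^{d_w/2,2}(X)\cap C(X)$ that equals $1$ on $B(x,\Lambda R)$ and vanishes outside $B(x,\tilde\Lambda R)$. This is precisely where Assumption~\ref{A:CCC} enters: summing the partition-of-unity elements whose supports meet $B(x,\Lambda R)$ produces the desired $\phi$, and the resulting function still lies in $KS^{d_w/2,2}(X)\cap C(X)$ by the linearity of that space and the finite-overlap property. Beyond this purely structural verification, the argument is a routine composition of Assumption~\ref{A:2PI_KS_dw}, the bound~\eqref{E:limsup_vs_Gamma_dw} and the density of the core.
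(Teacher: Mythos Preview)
Your proposal is correct and follows essentially the same route as the paper: chain Assumption~\ref{A:2PI_KS_dw} with the energy-measure bound~\eqref{E:limsup_vs_Gamma_dw} via $\liminf\le\limsup$. In fact you are more careful than the paper, which applies~\eqref{E:limsup_vs_Gamma_dw} directly to an arbitrary $f\in KS^{d_w/2,2}(X)$ without spelling out the density step from $KS^{d_w/2,2}(X)\cap C(X)$ that you include.
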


\begin{proof}
    By virtue of the 2-Poincar\'e inequality~\eqref{E:2PI_KS_dw} and~\eqref{E:limsup_vs_Gamma_dw} here exists $C>0$ and $\lambda,\Lambda>1$ such that 
    \begin{align*}
        \int_{B(x,R)}|f(y)-f_{B(x,R)}|^2d\mu(z)&\leq C R^{d_w} \liminf_{r\to 0^+} E_{d_w/2,B(x,\lambda R)}(f,r)\\
        &\leq C R^{d_w} \limsup_{r\to 0^+} E_{d_w/2,B(x,\lambda R)}(f,r)\\
        &\leq CR^{d_w} \int_{B(x,\Lambda R)}d\Gamma(f,f).
    \end{align*}
\end{proof}
%

\subsection{Non-strict locality}
So far it may seem that the case $d_w=2$ generalizes to $d_w>2$ with the appropriate modifications. However, a crucial difference displays when it comes to the geometry of the underlying space that is intrinsic to the Dirichlet form $(\mathcal{E},KS^{1,2}(X))$. While for $d_w=2$ the intrinsic distance $d_{\mathcal{E}}$ generated the same topology as the underlying distance $d$, that property usually fails when $d_w>2$.

\medskip

To make full use of the results available in the literature, we will replace Assumption~\ref{A:CCC} with the following capacity condition from~\cite[(1.20)]{GHL15}: for any $f\in KS^{1,2}(X)\cap L^\infty(X,\mu)$ and any balls $B(x,R)$, $B(x,R+r)$ there exists a cutoff function $\varphi$ with $\varphi\equiv 1$ in $B(x,R)$ and $\supp\varphi\subset B(x,R+r)$ such that
\begin{equation}\label{A:CS_dw}
    \int_Xf^2d\Gamma(\varphi,\varphi)\leq c_1\int_{B(x,R+r)\setminus B(x,R)}d\Gamma(f,f)+\frac{c_2}{r^{d_w}}\int_{B(x,R+r)\setminus B(x,R)}f^2d\mu.\tag{$\rm CS_{d_w}$}
\end{equation}

Assuming in addition a chain condition, see e.g.~\cite[Definition 2.10]{KM20}, the non-strict locality of $(\mathcal{E},KS^{1,2}(X))$ follows from~\cite[Theorem 2.13]{KM20}.
\begin{cor}
    Replacing Assumption~\ref{A:CCC} with~\eqref{A:CS_dw} and adding the chain condition, the energy measure associated with any non constant $f\in KS^{1,2}(X)$ is singular with respect to the underlying measure $\mu$.
\end{cor}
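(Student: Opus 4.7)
The plan is to reduce the corollary to a direct invocation of \cite{KM20}, Theorem 2.13, which asserts mutual singularity between the energy measure $\Gamma(f,f)$ and the underlying measure $\mu$ for every non-constant $f$ in the domain of a strongly local regular Dirichlet form on a metric measure space, provided one has \eqref{A:VD}, a chain condition, a $2$-Poincar\'e inequality for energy measures with walk dimension $d_w>2$, and a capacity cutoff inequality of the form \eqref{A:CS_dw}. So the proof is essentially a hypothesis-matching exercise.

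First I would check that the Dirichlet form $(\mathcal{E}, KS^{d_w/2,2}(X))$ of Theorem~\ref{T:main_dw} is still available once \ref{A:CCC} is replaced by \eqref{A:CS_dw}. Since both are capacity-flavored cutoff conditions, one verifies that \eqref{A:CS_dw}, together with \eqref{A:VD} and $d_w>2$, supplies enough cutoff functions to reproduce the partition-of-unity construction used in Lemma~\ref{L:Kumagai-Sturm_condition_dw} and Proposition~\ref{P:KS_as_Gamma_dw}; alternatively one may keep \ref{A:CCC} as a standing hypothesis and simply add \eqref{A:CS_dw} on top, which is the safer reading of the wording ``replacing''. Either way one retains a strongly local regular Dirichlet form on $KS^{d_w/2,2}(X)$ together with the $2$-Poincar\'e inequality for the energy measure provided by Theorem~\ref{T:2PI_Gamma_dw}. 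Combined with the chain condition now added by assumption, this assembles exactly the package of hypotheses required by \cite{KM20}, Theorem 2.13, and the conclusion $\Gamma(f,f)\perp\mu$ for every non-constant $f \in KS^{d_w/2,2}(X)$ follows.

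The main obstacle I expect is the first step: ensuring that the existence and regularity arguments in Section~\ref{SS:non-strictly}, which use \ref{A:CCC} to build $f_{n,\varepsilon}$ and to identify the core $KS^{d_w/2,2}(X) \cap C(X)$, continue to function with only \eqref{A:CS_dw} in hand. The substantive point is that \eqref{A:CS_dw} is formulated in terms of the energy measure $d\Gamma$ and thus presupposes a Dirichlet form, while \ref{A:CCC} is formulated intrinsically in terms of the Korevaar-Schoen energies, so a direct replacement would require first establishing the form and then verifying \eqref{A:CS_dw} compatibly. In practice, the natural route is to bundle the two, so that \ref{A:CCC} yields the form via Theorem~\ref{T:main_dw}, the chain condition and \eqref{A:CS_dw} are then checked for that form, and \cite{KM20}, Theorem 2.13, closes the argument.
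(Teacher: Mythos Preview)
Your proposal is correct and follows essentially the same route as the paper: both reduce the corollary to a direct application of \cite[Theorem~2.13]{KM20} after verifying its hypotheses. The paper's proof is terser---it cites \cite[(1.21)]{GHL15} to translate \eqref{A:CS_dw} into the cutoff Sobolev form used in \cite{KM20}, and then explicitly verifies the scaling condition $\liminf_{\lambda\to\infty}\lambda^2\Psi(r/\lambda)/\Psi(r)=\liminf_{\lambda\to\infty}\lambda^{2-d_w}=0$ for $\Psi(r)=r^{d_w}$, which is where $d_w>2$ enters; you should make that computation explicit rather than folding it into the phrase ``walk dimension $d_w>2$''. Your careful discussion of the circularity between \eqref{A:CS_dw} and the existence of the form is a point the paper addresses only afterwards, in Proposition~\ref{P:CSdw_CCC}, by showing \eqref{A:CS_dw} implies Assumption~\ref{A:CCC}; so your instinct that one must first secure the form via \ref{A:CCC} is exactly how the paper resolves it.
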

\begin{proof}
    By virtue of~\cite[(1.21)]{GHL15}, assumption~\eqref{A:CS_dw} is equivalent to the cutoff Sobolev condition from~\cite[Theorem 2.13]{KM20} with $\Psi(r)=r^{d_w}$. Together with Assumption~\eqref{A:2PI_KS_dw},~\eqref{A:VD} and the chain condition we may apply~\cite[Theorem 2.13]{KM20} with $\Psi(r)=r^{d_w}$. Since $d_w>2$,
    \[
    \liminf_{\lambda\to\infty}\liminf \frac{\lambda^2\Psi(r/\lambda)}{\Psi(r)}=\liminf_{\lambda\to\infty}\lambda^{2-d_w}=0
    \]
    the singularity of the energy measures $\Gamma(f,f)$ for all $f\in KS^{1,2}(X)$ follow.
\end{proof}

\begin{remark}\label{R:CS_dw_vs_CCC}
The capacity condition~\eqref{A:CS_dw} implies the controlled cutoff condition in Assumption~\ref{A:CCC}, c.f. Subsection~\ref{SS:converse_main}. It is still an open question to determine their precise relation. 
\end{remark}

\subsection{A converse to Theorem~\ref{T:main_dw} }\label{SS:converse_main}

We conclude the paper with the following result that may be regarded as a converse to the main Theorem~\ref{T:main_dw}.

\begin{thm}\label{converse}
Let $(X,d,\mu)$ be a compact metric measure space equipped with a regular Dirichlet form $(\mathcal{E},\mathcal{F})$ whose associated heat semigroup admits  a continuous heat kernel with sub-Gaussian estimates 
\begin{equation}\label{E:subG_HKE}
    p_t(x,y)\simeq \frac{c_1}{\mu(B(x,t^{1/d_w}))}\exp \bigg(\!-c_2\Big(\frac{d(x,y)}{t^{1/d_w}}\Big)^{\frac{d_w}{d_w-1}}\bigg),
    \end{equation}
    where $d_w>2$. Then, $\mathcal{F}=KS^{d_w/2,2}(X)$ and  Assumptions  \ref{A:2PI_KS_dw} and \ref{A:CCC} are satisfied.
 \end{thm}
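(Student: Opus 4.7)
The plan is to exploit the classical equivalence between sub-Gaussian heat kernel estimates and the combination of a 2-Poincar\'e inequality in terms of $d\Gamma$ together with a cutoff Sobolev condition, as developed by Grigor'yan-Hu-Lau, Andres-Barlow, Kajino-Murugan and others, and then to transfer each of these conditions into the Korevaar-Schoen framework by way of the key equivalence $\mathcal{E}(f,f)\simeq \sup_{r>0}E_{d_w/2,X}(f,r)$. Under \eqref{E:subG_HKE} and \eqref{A:VD}, that theory provides (a) the Poincar\'e inequality
\begin{equation*}
\int_{B(x,R)}|f-f_{B(x,R)}|^2\,d\mu \le CR^{d_w}\int_{B(x,\lambda R)}d\Gamma(f,f)\qquad \text{for all } f\in\mathcal{F},
\end{equation*}
together with (b) the cutoff Sobolev condition \eqref{A:CS_dw}. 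What remains is to identify the domain and to translate (a) and (b) to their Korevaar-Schoen counterparts.

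The identification $\mathcal{F}=KS^{d_w/2,2}(X)$ with equivalent norms would follow from the spectral formula
\begin{equation*}
\mathcal{E}(f,f)=\lim_{t\to 0^+}\frac{1}{2t}\int_X\int_X p_t(x,y)(f(x)-f(y))^2\, d\mu(y)\, d\mu(x).
\end{equation*}
Setting $r=t^{1/d_w}$ and decomposing the inner domain into $B(x,r)$ plus dyadic annuli $B(x,2^{k+1}r)\setminus B(x,2^k r)$, the bounds in \eqref{E:subG_HKE} combined with volume doubling and the fast decay $\exp(-c_2 2^{kd_w/(d_w-1)})$ on the annuli yield the two-sided comparison
\begin{equation*}
\mathcal{E}(f,f) \simeq \sup_{r>0}E_{d_w/2,X}(f,r),
\end{equation*}
and hence $\mathcal{F}=KS^{d_w/2,2}(X)$.

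To deduce Assumption \ref{A:2PI_KS_dw} I would combine (a) with a localized version of the above equivalence: applying the global comparison to $\varphi(f-f_{B(x,\Lambda R)})$, where $\varphi$ is a cutoff provided by (b) with $\varphi\equiv 1$ on $B(x,\lambda R)$ and $\supp\varphi\subseteq B(x,\Lambda R)$, then using strong locality of $\mathcal{E}$, the Leibniz identity $d\Gamma(\varphi g,\varphi g)\le 2\varphi^2 d\Gamma(g,g)+2g^2 d\Gamma(\varphi,\varphi)$, and (a) on $B(x,\Lambda R)$ to control the cross term, yields
\begin{equation*}
\int_{B(x,\lambda R)}d\Gamma(f,f) \le C\liminf_{r\to 0^+}E_{d_w/2,B(x,\Lambda R)}(f,r).
\end{equation*}
Inserting this into the right-hand side of (a) gives \eqref{E:2PI_KS_dw}. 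For the controlled cutoff condition \ref{A:CCC}, one takes a maximally separated $\varepsilon$-covering with bounded overlap (available by doubling) and the cutoff functions $\varphi_i^\varepsilon$ coming from (b); the estimate $\limsup_{r\to 0^+}E_{d_w/2,X}(\varphi_i^\varepsilon,r)\le C\varepsilon^{-d_w}\mu(B_i^\varepsilon)$ then follows by applying the equivalence of Step~2 to each $\varphi_i^\varepsilon$ and invoking the capacity bound $\int d\Gamma(\varphi_i^\varepsilon,\varphi_i^\varepsilon)\le C\mu(B_i^\varepsilon)/\varepsilon^{d_w}$ furnished by (b).

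The hard part is the accurate bookkeeping of the off-diagonal contribution in the key equivalence $\mathcal{E}\simeq \sup_r E_{d_w/2,X}$. The upper bound requires that the geometric series $\sum_k 2^{-kd_w}E_{d_w/2,X}(f,2^k r)$ absorb the factor $2^{kd_w}$ from volume doubling against the fast-decaying Gaussian factor $\exp(-c_2 2^{kd_w/(d_w-1)})$, the balance that the exponent $d_w/(d_w-1)$ in \eqref{E:subG_HKE} is tailored to ensure, while the matching lower bound relies on the near-diagonal lower estimate in \eqref{E:subG_HKE}. The localization step in the derivation of \eqref{E:2PI_KS_dw} is technically delicate but reduces to Leibniz-rule manipulations analogous to those performed in Lemmas~\ref{L:Kumagai-Sturm_condition_dw} and \ref{L:4KS_cond_dw}.
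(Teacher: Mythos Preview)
Your overall architecture is right and matches the paper: invoke the Grigor'yan--Hu--Lau/Kajino--Murugan equivalence to get the energy-measure Poincar\'e inequality and \eqref{A:CS_dw} from \eqref{E:subG_HKE}, identify $\mathcal{F}=KS^{d_w/2,2}(X)$ via the spectral formula and dyadic annuli, and derive Assumption~\ref{A:CCC} from \eqref{A:CS_dw} exactly as in Proposition~\ref{P:CSdw_CCC}.

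The gap is in your derivation of Assumption~\ref{A:2PI_KS_dw}. Applying the global comparison to $h=\varphi(f-f_{B(x,\Lambda R)})$ and expanding by Leibniz gives, on either the $\mathcal{E}$-side or the $E_{d_w/2}$-side, a cross term of the form $\int(f-c)^2\,d\Gamma(\varphi,\varphi)$; controlling it via \eqref{A:CS_dw} and the Poincar\'e inequality~(a) returns a term $\int_{B(x,\Lambda' R)}d\Gamma(f,f)$ over a \emph{larger} ball than the one on the left. There is no absorption mechanism, so the inequality does not close. Moreover, your appeal to ``manipulations analogous to Lemma~\ref{L:4KS_cond_dw}'' is circular: the proof of that lemma uses Assumption~\ref{A:2PI_KS_dw} itself. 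A related issue is that the dyadic-annuli argument you sketch for the global comparison only yields $\mathcal{E}(f,f)\le C\sup_{r>0}E_{d_w/2,X}(f,r)$; it does not by itself produce the $\liminf$ that Assumption~\ref{A:2PI_KS_dw} requires.

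The paper resolves this by localizing \emph{at the heat-kernel level} rather than by cutting off $f$. Using the identity
\[
\int_X\phi\,d\Gamma(f,f)=\lim_{t\to 0^+}\frac{1}{2t}\int_X\int_X\phi(z)\,(f(z)-f(y))^2\,p_t(z,y)\,d\mu(y)\,d\mu(z)
\]
with a cutoff $\phi$ supported in $B(x,\Lambda R)$, it splits the inner integral into $\{d(z,y)<\delta t^{1/d_w}\}$ and its complement, obtaining a functional inequality $\psi_x(t)\le \Phi_x(t)+A_\delta\,\psi_x(ct)$ where $\Phi_x(t)\le C\,E_{d_w/2,B(x,\Lambda R)}(f,\delta t^{1/d_w})$. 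Choosing $\delta$ large makes $A_\delta<1/2$ (this uses the lower sub-Gaussian bound to compare $p_t$ with $p_{ct}$), and since $\lim_{t\to 0}\psi_x(t)$ exists one obtains $\int_{B(x,R)}d\Gamma(f,f)\le C\liminf_{r\to 0^+}E_{d_w/2,B(x,\Lambda R)}(f,r)$ directly. This bootstrap is what converts $\sup$-type control into the required $\liminf$ and avoids the circularity in your Leibniz route.
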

 
 The fact that the sub-Gaussian estimates \eqref{E:subG_HKE} imply  $\mathcal{F}=KS^{d_w/2,2}(X)$ and 
 \[
 \sup_{r >0} E_{d_w/2,X}(f,r) \simeq \mathcal{E}(f,f)
 \]
 is well-known, see for instance \cite{BV2}. On the other hand, see e.g.~\cite[Theorem 2.8]{KM20}, volume doubling and~\eqref{E:subG_HKE} imply the 2-Poincar\'e inequality with respect to the energy measure~\eqref{E:2PI_Gamma_dw} and the capacitary estimate~\eqref{A:CS_dw}. To conclude the proof of Theorem \ref{converse}, it remains to show that~\eqref{E:subG_HKE}  also imply the 2-Poincar\'e inequality with respect to the Korevaar-Schoen energy from Assumption~\eqref{A:2PI_KS_dw} and the controlled cut off condition from Assumption~\eqref{A:CCC} respectively, both possibly with different constants.

\begin{prop}\label{P:2PI_Gamma_KS}
 The sub-Gaussian estimates \eqref{E:subG_HKE} imply the existence of constants $C>0$ and $\Lambda >1$ such that for every $f \in KS^{d_w/2,2}(X)$, $x \in X$ and $R>0$
 \[
  \int_{B(x, R)}d\Gamma(f,f) \le C  \liminf_{r\to 0^+} E_{d_w/2,B(x,\Lambda R)}(f,r).
 \]
 In particular, \eqref{E:subG_HKE} implies Assumption~\eqref{A:2PI_KS_dw}.
\end{prop}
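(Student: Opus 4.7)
The plan is to represent the local energy measure via the heat semigroup, exploit the super-polynomial decay in the sub-Gaussian upper bound to reduce to a near-diagonal integral, and then use a dyadic decomposition in scale to convert it into a weighted sum of Korevaar--Schoen energies at scales tending to zero. The final piece requires extracting a single liminf from this sum, which will be the main technical obstacle.

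Fix $x_0\in X$, $R>0$, and choose $1<\Lambda_1<\Lambda$. The capacitary condition~\eqref{A:CS_dw}, known from~\cite{KM20,GHL15} to follow from the sub-Gaussian estimates, provides a cutoff $\varphi\in\mathcal{F}\cap C_c(X)$ with $0\le\varphi\le 1$, $\varphi\equiv 1$ on $B(x_0,R)$ and $\supp\varphi\subset B(x_0,\Lambda_1R)$. By the standard heat-kernel representation of the energy measure,
\begin{equation*}
\int_{B(x_0,R)} d\Gamma(f,f)\le\int_X\varphi\,d\Gamma(f,f)=\lim_{t\to 0^+}\frac{1}{2t}\int_X\!\int_X\varphi(z)(f(z)-f(y))^2p_t(z,y)\,d\mu(y)\,d\mu(z).
\end{equation*}
Setting $\eta:=(\Lambda-\Lambda_1)R$, the contribution from $\{d(z,y)>\eta\}$ is $o(t)$ as $t\to 0^+$ because the sub-Gaussian factor $\exp(-c_2(\eta/t^{1/d_w})^{d_w/(d_w-1)})$ decays faster than any power of $t$. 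On the complement, I substitute $t=r^{d_w}$ and decompose dyadically as $B(z,r)\cup\bigcup_{k=0}^{K(r)}A_k(z)$ with $A_k(z)=\{y:2^kr<d(z,y)\le 2^{k+1}r\}$ and $2^{K(r)+1}r\asymp\eta$. Combining the sub-Gaussian upper bound $p_{r^{d_w}}(z,y)\le C\mu(B(z,r))^{-1}e^{-c_2 2^{k\beta}}$ (where $\beta=d_w/(d_w-1)$) with volume doubling $\mu(B(z,2^{k+1}r))\le C\,2^{(k+1)Q}\mu(B(z,r))$ yields
\begin{equation*}
\frac{1}{r^{d_w}}\int_X\!\int_{A_k(z)}\varphi(z)(f(z)-f(y))^2p_{r^{d_w}}(z,y)\,d\mu(y)\,d\mu(z)\le C_k\,E_{d_w/2,B(x_0,\Lambda R)}(f,2^{k+1}r),
\end{equation*}
with $C_k=Ce^{-c_2 2^{k\beta}}2^{(k+1)(Q+d_w)}$; the point is that $2^{k+1}r\le\eta$ forces $y\in B(x_0,\Lambda R)$ whenever $z\in\supp\varphi$, so the right-hand side is a genuinely local quantity. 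Summing over $k$ (the $C_k$ are summable because the double-exponential beats the polynomial) produces
\begin{equation*}
\int_X\varphi\,d\Gamma(f,f)\le\liminf_{r\to 0^+}\sum_{k=-1}^{K(r)}C_k\,E_{d_w/2,B(x_0,\Lambda R)}(f,2^{k+1}r).
\end{equation*}

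The remaining task is to bound this dyadic sum by $C'\liminf_{r\to 0^+}E_{d_w/2,B(x_0,\Lambda R)}(f,r)$. To this end I would first localize $f$ as $\tilde f=\chi f$, with $\chi\in\mathcal{F}\cap C_c(X)$ equal to $1$ on $B(x_0,2\Lambda R)$ and supported in $B(x_0,3\Lambda R)$; strong locality of $(\mathcal E,\mathcal F)$ then gives $d\Gamma(\tilde f,\tilde f)=d\Gamma(f,f)$ on $B(x_0,2\Lambda R)$, and for all sufficiently small $r$, $E_{d_w/2,B(x_0,\Lambda R)}(\tilde f,r)=E_{d_w/2,B(x_0,\Lambda R)}(f,r)$, so it suffices to prove the inequality for $\tilde f$. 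For the compactly supported $\tilde f$, the global equivalence $\sup_{\rho>0}E_{d_w/2,X}(\tilde f,\rho)\simeq\liminf_{\rho\to 0^+}E_{d_w/2,X}(\tilde f,\rho)\simeq\mathcal E(\tilde f,\tilde f)$ from~\cite{Bau22,BV2} under sub-Gaussian HKE becomes available, giving a uniform bound on $E_{d_w/2,B(x_0,\Lambda R)}(\tilde f,\rho)$ over all $\rho$. Fixing $K_0$ large, the tail $\sum_{k>K_0}C_k\,E_{d_w/2,B(x_0,\Lambda R)}(\tilde f,2^{k+1}r)$ is then bounded by $\varepsilon(K_0)\sup_\rho E_{d_w/2,X}(\tilde f,\rho)$ with $\varepsilon(K_0)\to 0$, while the first $K_0$ terms are handled along a diagonal subsequence $r_n\to 0$ approximating the local liminf at each of the finitely many scales $2^{k+1}r_n$, $k\le K_0$. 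The main obstacle is precisely this final coordination of liminfs across dyadic scales: different scales $2^{k+1}r$ need not simultaneously realize their liminf along one sequence, and the role of the localization step together with the global sub-Gaussian Korevaar--Schoen equivalence (possibly after enlarging $\Lambda$) is exactly to break this apparent circularity and absorb the tail into a constant.
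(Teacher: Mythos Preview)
Your approach and the paper's start the same way (heat-kernel representation of the local energy through a cutoff), but then diverge. The paper does \emph{not} perform a dyadic-in-scale decomposition. Instead, writing
\[
\psi(t):=\frac{1}{t}\int_X\!\int_X\varphi(z)(f(z)-f(y))^2p_t(z,y)\,d\mu(y)\,d\mu(z),
\]
it splits only once, at the single scale $\delta t^{1/d_w}$. The near part is bounded by $\Phi(t):=\frac{C}{t}\int_X\fint_{B(z,\delta t^{1/d_w})}\varphi(z)(f(z)-f(y))^2\,d\mu\,d\mu$ via the upper heat-kernel bound. For the far part the paper uses the \emph{lower} bound: for $d(z,y)>\delta t^{1/d_w}$ one has $p_t(z,y)\le Ce^{-c\delta^{d_w/(d_w-1)}}p_{ct}(z,y)$ (combine the sub-Gaussian upper bound at time $t$ with the lower bound at time $ct$ for a fixed large $c$, absorbing the volume ratio by doubling). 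Hence
\[
\psi(t)\le \Phi(t)+A_\delta\,\psi(ct),\qquad A_\delta=Ce^{-c\delta^{d_w/(d_w-1)}}.
\]
Since $\psi(t)\to 2\int_X\varphi\,d\Gamma(f,f)$ as $t\to0^+$ (this limit \emph{exists}), sending $t\to0^+$ and choosing $\delta$ so that $A_\delta<1/2$ gives $\lim\psi\le 2\liminf\Phi$, which is exactly the local Korevaar--Schoen liminf over $B(x_0,\Lambda R)$. No coordination of liminfs at different scales is needed: the self-referential inequality plus the existence of $\lim\psi$ does all the work.

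Your route has a genuine gap precisely where you flag it. You reach
\[
\int_X\varphi\,d\Gamma(f,f)\le\liminf_{r\to0^+}\sum_{k}C_k\,E_{d_w/2,B(x_0,\Lambda R)}(f,2^{k+1}r),
\]
and then need to dominate the right side by $C'\liminf_{r\to0^+}E_{d_w/2,B(x_0,\Lambda R)}(f,r)$. The localization $\tilde f=\chi f$ and the global equivalence $\sup_\rho E_{d_w/2,X}(\tilde f,\rho)\simeq\mathcal E(\tilde f,\tilde f)$ do not close this: $\mathcal E(\tilde f,\tilde f)=\int_X d\Gamma(\tilde f,\tilde f)$ lives on $\supp\chi\supset B(x_0,\Lambda R)$ and picks up energy from the cutoff region, so bounding it by $\liminf_{r\to 0^+}E_{d_w/2,B(x_0,\Lambda R)}(f,r)$ (or even over a larger ball) is again the statement you are proving. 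Likewise, the ``first $K_0$ terms along a diagonal sequence'' argument fails in the direction you need: $\liminf$ of a sum is only bounded above by the sum of $\limsup$'s, not of $\liminf$'s, and the local $\limsup\le C\liminf$ comparison is Lemma~\ref{L:4KS_cond_dw}, whose proof uses Assumption~\ref{A:2PI_KS_dw}---the very conclusion at stake. The paper's bootstrap avoids this circularity entirely; I recommend replacing the dyadic sum by the single self-improving inequality $\psi(t)\le\Phi(t)+A_\delta\psi(ct)$.
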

\begin{proof}
Let $\phi_x\in C(X)$, $0 \le \phi_x \le 1$ be a cut off function with $\phi\equiv 1$ in $B(x,R)$ and $\supp\phi\subseteq B(x,\Lambda R)$. Then,
\begin{equation}\label{E:2PI_Gamma_KS_01}
    \int_{B(x, R)}d\Gamma(f,f)\leq \int_X\phi_xd\Gamma(f,f)=\lim_{t\to 0^+}\frac{1}{2t}\int_X\int_X\phi_x(z)(f(z)-f(y))^2p_t(z,y)\,d\mu(y)\,d\mu(z).
\end{equation}
Fix $\delta>0$ to be specifically chosen later and set
\[
\psi_x(t):=\frac{1}{t}\int_X\int_X\phi_x(z)(f(z)-f(y))^2p_t(z,y)\,d\mu(y)\,d\mu(z).
\]
On the one hand, the upper heat kernel bound $p_t(z,y)\leq C\mu(B(z,t^{1/d_w}))^{-1}$ and the volume doubling property yield
\begin{align}
&\frac{1}{t}\int_X\int_{B(z,\delta t^{1/d_w})}\phi_x(z)(f(z)-f(y))^2p_t(z,y)\,d\mu(y)\,d\mu(z)\notag\\
&\leq \frac{c}{t}\int_X\fint_{B(z,\delta t^{1/d_w})}\phi_x(z)(f(z)-f(y))^2d\mu(y)\,d\mu(z)=:\Phi_x(t).\label{E:2PI_Gamma_KS_02}
\end{align}
On the other hand, the lower heat kernel bound implies
\begin{align}
 &\frac{1}{t}\int_X\int_{X\setminus B(z,\delta t^{1/d_w})}\phi_x(z)(f(z)-f(y))^2p_t(z,y)\,d\mu(y)\,d\mu(z)\notag\\
&\leq \frac{c}{t}e^{-c'\delta^{d_w/(d_w-1)}}\int_X\int_X\phi_x(z)(f(z)-f(y))^2p_{ct}(z,y)\,d\mu(y)\,d\mu(z)=:A_\delta\psi_x(ct),\label{E:2PI_Gamma_KS_03}
\end{align}
so that $\psi_x(t)\leq \Phi_x(t)+A_\delta\psi_x(ct)$. Choosing $\delta>0$ large enough to make $A_\delta<1/2$, it follows from~\eqref{E:2PI_Gamma_KS_02} and~\eqref{E:2PI_Gamma_KS_03} that $\lim_{t\to 0^+}\psi_x(t)\leq C\liminf_{t\to 0^+}\Phi_x(t)$. Together with~\eqref{E:2PI_Gamma_KS_01}, the fact that $\supp\phi_x\subset B(x,\Lambda R)$ and Lemma~\ref{L:4KS_cond_dw},
\begin{align}
    \int_{B(x, R)}d\Gamma(f,f)&\leq C\liminf_{t\to 0^+}\frac{1}{t}\int_X\fint_{B(x,\delta t^{1/d_w})}\phi_x(z)(f(z)-f(y))^2d\mu(y)\,d\mu(z)\notag\\
    &\le C\liminf_{t\to 0^+}\frac{1}{t}\int_{B(x,\Lambda R)}\fint_{B(z,\delta t^{1/d_w})} (f(z)-f(y))^2d\mu(y)\,d\mu(z)\notag\\
    &=C\liminf_{r\to 0^+}\frac{1}{r^{d_w}}\int_{B(x,\Lambda R)}\fint_{B(z,r)} (f(z)-f(y))^2d\mu(y)\,d\mu(z)\notag\\
    &=C\liminf_{r\to 0^+}E_{d_w/2,B(x,\Lambda R)}(f,r)\notag.
    \end{align}
\end{proof}
We finish this section with the proof of the implication announced in Remark~\ref{R:CS_dw_vs_CCC}.

\begin{prop}\label{P:CSdw_CCC}
  The cutoff Sobolev condition ~\eqref{A:CS_dw} implies Assumption~\eqref{A:CCC} 
\end{prop}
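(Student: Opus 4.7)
The plan is to apply the cutoff Sobolev condition~\eqref{A:CS_dw} with the constant function $f\equiv 1$ to produce cutoffs adapted to a maximal $\varepsilon$-covering of $X$, and then renormalize them into the required partition of unity.

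Fix $\varepsilon>0$. Volume doubling provides, as in~\cite[Section 4.1]{HKST15}, a maximal $\varepsilon$-separated set $\{x_i\}_i$ in $X$ such that $\{B_i^\varepsilon:=B(x_i,\varepsilon)\}_i$ covers $X$ and $\{B_i^{5\varepsilon}\}_i$ has bounded overlap uniformly in $\varepsilon$. For each $i$, applying~\eqref{A:CS_dw} at the center $x_i$ with $R=\varepsilon$, $r=\varepsilon$, and $f\equiv 1$ (so $d\Gamma(f,f)=0$) yields $\psi_i^\varepsilon\in\mathcal{F}$ with $0\le\psi_i^\varepsilon\le 1$, $\psi_i^\varepsilon\equiv 1$ on $B_i^\varepsilon$, $\supp\psi_i^\varepsilon\subset B_i^{2\varepsilon}$, and
\[
\mathcal{E}(\psi_i^\varepsilon,\psi_i^\varepsilon)\le\frac{c_2}{\varepsilon^{d_w}}\mu(B_i^{2\varepsilon}\setminus B_i^\varepsilon)\le\frac{C}{\varepsilon^{d_w}}\mu(B_i^\varepsilon),
\]
the last step by doubling.

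Next, set $S_\varepsilon:=\sum_j\psi_j^\varepsilon$ and define $\varphi_i^\varepsilon:=\psi_i^\varepsilon/S_\varepsilon$. Since $\psi_j^\varepsilon\equiv 1$ on $B_j^\varepsilon$ and the balls $B_j^\varepsilon$ cover $X$, one has $S_\varepsilon\ge 1$ pointwise; bounded overlap yields $S_\varepsilon\le N$ for some universal $N$. The properties $0\le\varphi_i^\varepsilon\le 1$, $\sum_i\varphi_i^\varepsilon=1$ and $\varphi_i^\varepsilon=0$ outside $B_i^{2\varepsilon}$ are immediate, while $\varphi_i^\varepsilon\in KS^{d_w/2,2}(X)\cap C(X)$ follows from the Dirichlet form chain rule applied to the bounded smooth map $(u,v)\mapsto u/v$ on $[0,1]\times[1,N]$. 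For the key energy estimate, the chain rule combined with $S_\varepsilon\ge 1$, $\psi_i^\varepsilon/S_\varepsilon\le 1$ and Cauchy--Schwarz for $\Gamma$ gives
\[
d\Gamma(\varphi_i^\varepsilon,\varphi_i^\varepsilon)\le 2\,d\Gamma(\psi_i^\varepsilon,\psi_i^\varepsilon)+2\,d\Gamma(S_\varepsilon,S_\varepsilon).
\]
Strong locality concentrates the left-hand side in $\overline{B_i^{2\varepsilon}}$, and on that set only the uniformly bounded number of indices $j$ with $B_j^{2\varepsilon}\cap B_i^{2\varepsilon}\ne\emptyset$ contribute to $d\Gamma(S_\varepsilon,S_\varepsilon)$. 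Using doubling to absorb each neighboring $\mu(B_j^\varepsilon)$ into $\mu(B_i^\varepsilon)$, one obtains $\mathcal{E}(\varphi_i^\varepsilon,\varphi_i^\varepsilon)\le C\varepsilon^{-d_w}\mu(B_i^\varepsilon)$, and hence
\[
\limsup_{r\to 0^+}E_{d_w/2,X}(\varphi_i^\varepsilon,r)\le C\varepsilon^{-d_w}\mu(B_i^\varepsilon)
\]
via the equivalence $\sup_{r>0}E_{d_w/2,X}(\cdot,r)\simeq\mathcal{E}(\cdot,\cdot)$ that holds in the sub-Gaussian setting of Theorem~\ref{converse}, cf.~\cite{BV2}.

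The main technical obstacle is that~\eqref{A:CS_dw} only produces $\psi_i^\varepsilon\in\mathcal{F}\cap L^\infty$, whereas~\eqref{A:CCC} demands a representative in $C(X)$. In the converse setting of Theorem~\ref{converse}, the continuous sub-Gaussian heat kernel estimate~\eqref{E:subG_HKE} forces bounded elements of $\mathcal{F}$ to admit continuous (in fact H\"older) representatives, so the preceding construction automatically lands in $C(X)$; otherwise one would mollify $\psi_i^\varepsilon$ by a short time of the heat semigroup and then truncate, at the cost of only universal multiplicative constants in the support and energy bounds.
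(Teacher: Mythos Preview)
Your proof is correct and follows essentially the same approach as the paper: apply~\eqref{A:CS_dw} with $f\equiv 1$ on a doubling cover to obtain cutoffs with the right energy bound, then pass from $\mathcal{E}$ to $\limsup_{r\to 0^+}E_{d_w/2,X}$ via the comparison $\sup_{r>0} E_{d_w/2,X}\lesssim\mathcal{E}$. The paper's argument is terser---it absorbs the normalization step into ``standard covering arguments'' and does not explicitly address the continuity requirement in Assumption~\ref{A:CCC}---so your explicit treatment of both the partition-of-unity construction and the continuity issue is a refinement rather than a different route.
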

\begin{proof}

By standard covering arguments, the cutoff Sobolev condition ~\eqref{A:CS_dw} with $f=1$ implies that  for every $\eps >0$ there exists a covering $\{B_i^\eps=B(x_i,\eps)\}_i$ of $X$, so that $\{B_i^{5\eps}\}_i$ has the bounded overlap property (uniformly in $\eps$) and an associated family of functions $\pip_i^\eps$ satisfying
\begin{itemize}
\item $\pip_i^\eps \in \mathcal{F}=KS^{d_w/2,2}(X)$;
\item $0\le \pip_i^\eps\le 1$ on $X$;
\item $\sum_i\pip_i^\eps=1$ on $X$; 
\item $\pip_i^\eps=0$ in $X\setminus B_i^{2\eps}$;
\item $\mathcal{E}(\pip_i^\eps,\pip_i^\eps) \le C\varepsilon^{-d_w} \mu(B_i^{\eps})$.
\end{itemize}
Since, $ \sup_{r >0} E_{d_w/2,X}(\pip_i^\eps,r)\le \mathcal{E}(\pip_i^\eps,\pip_i^\eps)$, the claim follows immediately.
%
\end{proof}
\bibliographystyle{amsplain}
\bibliography{Mosco_convergence.bib}

\bigskip

\noindent
\textbf{Patricia Alonso Ruiz}\\
Department of Mathematics\\ 
Texas A{\&}M University\\ 
Mailstop 3368\\ 
College Station TX 77848\\
\texttt{paruiz@tamu.edu}

\medskip

\noindent
\textbf{Fabrice Baudoin}\\
Department of Mathematics\\
University of Connecticut\\
Storrs CT 06268\\
\texttt{fabrice.baudoin@uconn.edu}
\end{document}